\def\i{\bar{i}}
\def\v{\mathbf{v}}
\def\p{\mathbf{p}}
\def\M{\overline{M}}
\def\q{\mathbf{q}}
\newtheorem{theorem}{Theorem}[section]
\newtheorem{proposition}[theorem]{Proposition}
\newtheorem{definition}[theorem]{Definition}
\newtheorem{corollary}[theorem]{Corollary}
\newtheorem{lemma}[theorem]{Lemma}
\theoremstyle{definition}
\newtheorem{example}[theorem]{Example}
\newtheorem{observation}[theorem]{Observation}
\newtheorem{remark}[theorem]{Remark}
\title{Adaptive dynamics of direct reciprocity with $N$~rounds of memory }
\author{N. Balabanova, H. Duong, C. Hilbe }
\numberwithin{equation}{section}
\numberwithin{figure}{section}
\begin{document}
\maketitle

\noindent
{\bf Abstract:} The theory of direct reciprocity explores how individuals cooperate when they interact repeatedly. 
In repeated interactions, individuals can condition their behaviour on what happened earlier. 
One prominent example of a conditional strategy is Tit-for-Tat, which prescribes to cooperate if and only if the co-player did so in the previous round. 
The evolutionary dynamics among such memory-1 strategies have been explored in quite some detail. 
However, obtaining analytical results on the dynamics of higher memory strategies becomes increasingly difficult, due to the rapidly growing size of the strategy space.
Here, we derive such results for the adaptive dynamics in the donation game. 
In particular, we prove that for every orbit forward in time, there is an associated orbit backward in time that also solves the differential equation. 
Moreover, we analyse the dynamics by separating payoffs into a symmetric and an anti-symmetric part and demonstrate some properties of the anti-symmetric part. 
These results highlight some interesting symmetries that arise when interchanging player one with player two, and cooperation with defection.

\section{Introduction}
The prisoner's dilemma is a classic thought experiment in game theory that helps to analyse social phenomena such as altruism and cooperation~\cite{trivers1971evolution,Glynatsi:HSSC:2021}.  
This game describes a situation in which two individuals independently decide whether to cooperate or defect with one another. 
When the game is only played once, mutual defection is the only equilibrium -- see \cite{Rossetti:ETH:2023} for details. However, when the game is repeated sufficiently often, many additional equilibria appear, including ones that allow for full cooperation~\cite{friedman:RES:1971}. 

To explain how these cooperative behaviours might emerge in nature, researchers use the tools of evolutionary game theory~\cite{sigmund2010calculus,broom:book:2013}.
This literature assumes the players' strategies are adaptable.
Players can change their strategies over time depending on their relative success. 
The resulting dynamics can be described either with differential equations (in infinite populations), or stochastic processes (in finite populations), see~\cite{traulsen:PRL:2005}.
However, in either case, it becomes increasingly difficult to derive analytical results when the game allows for many strategies (as in the repeated prisoner's dilemma). 
To circumvent these problems, researchers often explore the dynamics numerically (as in \cite{nowak:PNAS:1993, brauchli:JTB:1999, eriksson:proceedings:2001, szabo:PRE:2005, garcia:PLoSOne:2012, stewart2013extortion, szolnoki:scirep:2014,McAvoy:PNASNexus:2022} and many others). 

In this paper, we take a purely mathematical approach by studying the game's adaptive dynamics \cite{metzadaptive, brannstrom2013hitchhiker}.
This approach has been previously used to analyse the repeated prisoner's dilemma among players with restricted memory~\cite{nowak:AAM:1990,imhof:PRSB:2010,allen:AmNat:2013,hilbe2013adaptive,Chen:NJP:2022,laporte2023adaptive}. 
A common assumption of these studies is that players only react to the co-player's very last move, or the last moves of both players. 
Herein, we aim to derive results for players with more memory. 
Specifically, we allow players to consider the last 2, 3, or more generally, $N$ rounds. 

We introduce our general framework in Section~\ref{Sec:Model}. 
In particular, we explain how repeated games among memory-$N$ players can be represented as a finite-state Markov chain~\cite{hauert1997effects,stewart:scirep:2016,Mamiya:PRE:2020,Ueda:RSOS:2021,Murase:PLoSCompBio:2023a}.
Using methods of linear algebra, in Section~\ref{sec: memory N repeated donation games} we investigate some properties of the correponding transition matrix. 
In particular, we describe its general structure and the natural transformations that this matrix lends itself to. 
We show that there exist only eight ``alternative perspectives'' for a two-player memory-$N$ game.
These eight perspectives arise by either changing the focal player, or by replacing cooperation with defection. 
We describe exactly how each of these transformations can be achieved through transforming the transition matrix. 

In Section~\ref{sec: adaptive dynamics} we explore the adaptive dynamics of the repeated Prisoner's Dilemma, generalising some results from \cite{hilbe2017memory} and \cite{laporte2023adaptive}. 
Additionally, we show that the dynamics can be represented as the sum of two (not uncoupled) vector fields, which we refer to as the symmetric and anti-symmetric parts. 
With the appropriate choice of the payoff vector, the original dynamics can be viewed as a perturbation of the anti-symmetric part; we describe this in detail in the case of memory 1. 
Section~\ref{sec: antisymm dynamics} is dedicated to investigating some  properties of the anti-symmetric adaptive dynamics, such as their equilibria and invariants.  
We demonstrate that a tit-for-tat strategy is always an equilibrium of the system. 
Moreover, we generalise a result demonstrated in \cite{laporte2023adaptive}, by showing that the anti-symmetric system always has $2^{N-1}$ conserved quantities.

Overall, our results highlight the rich structure inherent in the adaptive dynamics of repeated games. 
Although the dimension of the strategy space increases quickly in $N$, this structure can help us understand the evolution of direct reciprocity in more detail.

\section{Model} \label{Sec:Model}
 \subsection{Adaptive dynamics}

In the most classical setting, adaptive dynamics considers an asexually reproducing resident population characterised by some trait $x$.  
This population is regularly challenged by mutants with trait $y$. 
Whether or not the mutant can invade (and take over the population) depends on its invasion fitness~$A(y,x)$. 
This function describes the fitness advantage (or disadvantage) that the mutant faces in a homogeneous population of residents. 
Later, we will identify this invasion fitness with the payoff of a player with strategy $y$ against a resident with strategy $x$. 
Mutants successfully invade and subsequently become the majority if and only if (see \cite{hofbauer1998evolutionary})
\[
A(y,x)> A(x,x).
\]
This condition means: against residents,  mutants need to receive a higher payoff than the residents receive themselves. 

The invasion fitness is a way to quantify the evolutionary advantage of mutations. 
In many applications, this invasion function is well-behaved. 
Thus, when considering an evolving population as a whole, it makes sense to adopt the approach of 
\textit{meso-evolution} 
 \cite{metz2012adaptive}. At its core lie two main ideas: ({\it i})~evolution should be perceived through the lens of changing uniform traits in the population, and ({\it ii})~said changes in traits are presented as a continuous stream of mutations.  

Leveraging on these principles, one can formally write the equation for the direction of the mutation that yields the largest invasion fitness. 
 First proposed by Hofbauer and Sigmund in 1990 \cite{hofbauer1990adaptive}, the resulting adaptive dynamics takes the form
\begin{equation}
    \label{eq: adaptive dynamics}
    \dot{y} = \frac{\partial A(x,y)}{\partial y}\vert_{y=x}.
\end{equation}
These dynamics can be motivated by assuming that the  mutant trait~$y$ is infinitesimally close to the resident trait~$x$. 
At every subsequent time step, the mutant with the largest payoff advantage reaches fixation and forms the new resident population. 
This new population is then again challenged by those mutants that have the largest advantage in the new environment.

\subsection{Prisoner's dilemma}
In the following, we assume that the population members derive their fitness from interacting in a repeated prisoner's dilemma. 
The repeated prisoner's dilemma is a two-player game where in each round, players can choose between two actions. 
They can either \textit{cooperate} ($C$) or \textit{defect} ($D$) (see \cite{nowak1995invasion, beckenkamp2007cooperation}). 
Every round, depending on the choices of both of the players, they reap an appropriate payoff. 
Payoffs are often represented from the point of view of player 1 (who chooses a row), when interacting against player 2 (who chooses a column),
\begin{equation*}
\begin{split}
\begin{matrix}
\ \  \ C \ \ D \\
\begin{matrix}
C\\ D
\end{matrix}   \begin{pmatrix}
        R&S\\
        T&P
    \end{pmatrix}
    \end{matrix}
    \end{split}
    \end{equation*}
The standard assumptions are that the payoffs satisfy $T>R>P>S$ and $2R>T+S$.
On the one hand, these conditions ensure that mutual cooperation is preferred to mutual defection (because $R>P$). 
On the other hand, each individual is tempted to defect irrespective of the co-player's choice (because $T>R$ and $P>S$). 
When specified, we will additionally assume that our game satisfies the condition of {\it equal gains from switching} \cite{nowak:AAM:1990}. 
That is, we assume
\[
R+P=T+S.
\]
One commonly used instance of a prisoner's dilemma with equal gains from switching is the so-called donation game \cite{sigmund2010calculus}. 
Here, cooperation means to pay a cost $c$ for the co-player to get a benefit $b\!>\!c$. Defection means to pay no cost and to provide no benefit. 
Hence, $R\!=\!b\!-\!c$, $S\!=\!-c$, $T\!=\!b$, and $P\!=\!0$. 

As is commonly assumed in the literature, we suppose the game is repeated for infinitely many rounds, and that payoffs of future rounds are not discounted.

\subsection{Reactive strategies}
\label{sec: reactive strategy}

When interacting in a repeated prisoner's dilemma, each player needs to have a strategy. 
That is, each player needs a rule that determines how to play in each round, given the previous history of interactions. 
Perhaps the simplest non-trivial class of strategies is the set of reactive strategies~\cite{nowak:AAM:1990}. 
Here, a player's action in any given round only depends on what the co-player did in the previous round. 
We can represent reactive strategies as 2-tuples, $\p = (p_{1},p_{2})$ for the focal player~1 and $\q = (q_{1},q_{2})$ for the opponent player~2. 
The entry $p_{1}$ indicates the focal player's probability of cooperating given that in the previous round, their opponent cooperated. 
The second entry $p_{2}$ is the cooperation probability given that in the previous round, their opponent defected. The quantities $q_{1}$ and $q_{2}$ are defined analogously. 
Although reactive strategies are comparably simple, they exhibit a multitude of fascinating phenomena \cite{molnar2023reactive, wahl1999continuous, baklanov2021reactive}. 

When both players use reactive strategies, there is an explicit formula for their expected payoffs per round. 
To represent this formula, we use the notation by Hofbauer \& Sigmund \cite{hofbauer1998evolutionary}: let $r = p_{1}-p_{2}$, $r' = q_{1}-q_{2}$, $m = \frac{p_{2} + r q_{2}}{1 -r r'}$ and $m'= \frac{q_{2} + r p_{2}}{1 -r r'}$. 

Then player 1's payoff function $A$ is given by
\begin{equation} \label{eq:payff_reactive}
\begin{split}
A(\p,\q) &= R m m' + S m (1-m') + T(1-m')m + P(1-m)(1-m')\\&= (1-c)m m' -c m(1-m') + (1-m)m'\\
&=\frac{ b\left(\left(q_1-q_2\right) p_2 + q_2\right) - c \left(p_2 + \left(p_1-p_2\right) q_2\right)}{1 - \left(p_1-p_2\right)\left(q_1-q_2\right)}
\end{split}
\end{equation}
The explicit form of the resulting adaptive dynamics, as well as their properties can be found in \cite{hofbauer1998evolutionary}.

\subsection{Memory-$N$ strategies}

Instead of assuming that players only react to their co-player's last action, in the following we allow them to take into account both players' actions in the $N$ previous rounds. 
Thus, their decision in the $N+1$st round will be governed by all the choices in the $N$ previous turns. 
Again, we write the two players' memory-$N$ strategies as two vectors $\p$ and $\q$, respectively. 
When both players adopt memory-$N$ strategies, we sometimes speak of a memory-$N$ game. 

These vectors contain the players' conditional probability to cooperate, depending on the outcome of the last $N$ rounds. 
As a result, they lie in $\mathbb{R}^{2^{2N}}$ and have entries of the form $p_{i_1 i_2\ldots i_{2N}}$ where $i_j\!\in\!\{C,D\}$.  
 Each pair of indices $i_{2k-1}$ and $i_{2k}$ with $k\!\in\!\{1,\ldots,N\}$  describes the two players' choices in a given round. 
We use the convention that within each pair, the focal player's choice comes first. Moreover, the oldest remembered round is given by the first pair $(i_1,i_2)$, whereas the outcome of the most recent round is given by the last pair $(i_{2N-1},i_{2N})$. 

As an example, consider a memory-3 strategy's entry $p_{CCCDDC}$. 
This entry gives the probability that player~1 cooperates in the next round, given that three rounds ago, both players cooperated, two rounds ago player~1 cooperated and player~2 defected, and in the previous round, player~1 defected and player~2 cooperated.  

\begin{remark}
For brevity, 
   we will sometimes write $p_i$ for the element $p_{i_1\ldots i_{2N}}$ of the vector $\p$. These two types of notation will be used interchangeably. 
\end{remark}

\noindent
We assume the vectors $\p$ and $\q$ are written in lexicographic order with $C$ coming first,
\[
\p = \begin{pmatrix}
    p_{CC\ldots CC}\\
    p_{CC\ldots CD}\\
    p_{CC\ldots DC}\\
    p_{CC\ldots DD}\\
    \vdots\\
    p_{DD\ldots DD}\\
\end{pmatrix}, \ \ \q = \begin{pmatrix}
    q_{CC\ldots CC}\\
    q_{CC\ldots CD}\\
    q_{CC\ldots DC}\\
    q_{CC\ldots DD}\\
    \vdots\\
    q_{DD\ldots DD}\\
\end{pmatrix}
\]
Once the players' strategies $\p$ and $\q$ are given, we can describe the resulting game dynamics as a Markov chain. 
The states of this chain are the possible $2^{2N}$ outcomes of the past $N$ rounds.  
Let  $\mathbf{M}_N(\p,\q)$ denote the corresponding  transition matrix (we sometimes omit the variables and simply write $\mathbf{M}_N$). 
This matrix is stochastic. When all $\p, \ \q\in(0,1)^{2^{2N}}$, it has a unique left unit eigenvector $\nu$ corresponding to eigenvalue 1. 
That is, $\nu$ is the unique vector that satisfies $\nu^T\mathbf{M}_N =\nu^T$ and $\sum_i \nu_i = 1$. Sometimes, this vector is referred to as the Markov chain's \textit{invariant distribution}~\cite{norris1998markov}. We describe the precise form of $\mathbf{M}_N$ further below. 

The information about a player's (one-round) payoff after a certain outcome of $N$ choices can be encoded in a $2^{2N}$ dimensional \textit{payoff vector} $\mathbf{f}_N$.  
Then the payoff function for the repeated game is defined as the first player's weighted average payoff across all rounds, 
\begin{equation}
    \label{eq: payoff function initial}
    A(\p,\q) = \left<\nu(\p,\q),\mathbf{f}_N\right>.
\end{equation}
The corresponding adaptive dynamics are then given by
\[
\dot{\p} =\frac{\partial A(\p,\q)}{\partial\p}\Big|_{\p=\q}= \frac{\partial \left<\nu(\p,\q),\mathbf{f}_N\right> }{\partial\p}\Big|_{\p=\q}.
\]
Having an evolutionary advantage in this model corresponds to getting the largest expected profit. For a homogeneous resident population with strategy~$\q$, adaptive dynamics thus points towards the (infinitesimally close) mutant strategy $\p$ that performs best against the resident.

The simplest example of the payoff function and corresponding adaptive dynamics is in memory 1. In this case, by using the formalism of Press \& Dyson~\cite{press2012iterated}, one can write the payoff function as
\[
A(\p,\q)  :=\frac{\left| 
\begin{array}{cccc}
 p_{CC} q_{CC}-1 & p_{CC}-1 & q_{CC}-1 & f_1 \\
 p_{CD} q_{DC} & p_{CD}-1 & q_{DC} & f_2 \\
 p_{DC} q_{CD} & p_{DC} & q_{CD}-1 & f_3  \\
 p_{DD} q_{DD} & p_{DD} & q_{DD} & f_4 \\
\end{array}
\right|}{\left| 
\begin{array}{cccc}
 p_{CC} q_{CC}-1 & p_{CC}-1 & q_{CC}-1 & 1 \\
 p_{CD} q_{DC} & p_{CD}-1 & q_{DC} & 1 \\
 p_{DC} q_{CD} & p_{DC} & q_{CD}-1 & 1 \\
 p_{DD} q_{DD} & p_{DD} & q_{DD} & 1 \\
\end{array}
\right|},
\]
with $(f_1,f_2,f_3,f_4)\!=\!(R,S,T,P)$. 
In the special case of equal gains from switching, an explicit form of the adaptive dynamics can be found in \cite{laporte2023adaptive}; we provide the full general form in Appendix~\ref{sec: appendix}, equation (\ref{eq: adaptive dynamics general form}).

\subsection{Counting strategy}
\label{sec: counting strategy}
In the general case, the adaptive dynamics of a game among players with memory-1 strategies takes place on a four-dimensional space. However, there exists an invariant three-dimensional subspace within $\mathbb{R}^4$:
\begin{theorem}[LaPorte, Hilbe et al., \cite{laporte2023adaptive}]
    The condition $p_{CD}=p_{DC}$ is invariant on the trajectories of memory-1 adaptive dynamics. 
\end{theorem}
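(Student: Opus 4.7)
The plan is to prove the stronger vector-valued statement $\partial_{p_{CD}}\nu(\p,\q)=\partial_{p_{DC}}\nu(\p,\q)$ at $\p=\q$ on the locus $\{p_{CD}=p_{DC}\}$; the claimed invariance then follows immediately, since $\dot p_{CD}-\dot p_{DC} = \langle \partial_{p_{CD}}\nu - \partial_{p_{DC}}\nu,\mathbf{f}\rangle$. Write $\Delta:=\partial_{p_{CD}}\nu - \partial_{p_{DC}}\nu$ evaluated on this locus; the goal is to show $\Delta=0$.

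The first preparatory step is the diagonal symmetry $\nu_{CD}(\p,\p)=\nu_{DC}(\p,\p)$. Let $P_\sigma$ be the $4\times 4$ permutation matrix that exchanges the states $CD$ and $DC$ and fixes $CC,DD$. A short calculation from the entrywise formula $(\mathbf{M}_1)_{(a,b),(a',b')}=\phi(p_{ab},a')\,\phi(q_{ba},b')$, with $\phi(x,C)=x$ and $\phi(x,D)=1-x$, shows that at $\p=\q$ one has $P_\sigma \mathbf{M}_1(\p,\p)=\mathbf{M}_1(\p,\p)P_\sigma$. Irreducibility of $\mathbf{M}_1$ makes its left $1$-eigenspace one-dimensional, so the unique normalized left $1$-eigenvector must be $P_\sigma$-invariant; this is precisely $\nu_{CD}=\nu_{DC}$ on the diagonal.

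Next, differentiate the stationarity identity $\nu^T\mathbf{M}_1=\nu^T$ with respect to $p_{CD}$ and $p_{DC}$ and subtract. Because $p_{CD}$ appears only in row $CD$ of $\mathbf{M}_1$, the derivative $\partial_{p_{CD}}\mathbf{M}_1$ is a rank-one matrix of the form $e_{CD}\,w^T$ where the row vector $w^T$ depends only on $q_{DC}$; analogously $\partial_{p_{DC}}\mathbf{M}_1 = e_{DC}\,(w')^T$ with $w'$ depending only on $q_{CD}$. On our locus with $\p=\q$ we have $q_{CD}=q_{DC}$, so $w=w'$, and hence
\[
\nu^T\bigl(\partial_{p_{CD}}\mathbf{M}_1 - \partial_{p_{DC}}\mathbf{M}_1\bigr) = (\nu_{CD}-\nu_{DC})\,w^T = 0
\]
by the diagonal symmetry. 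This gives $\Delta^T(\mathbf{M}_1 - I)=0$, so $\Delta^T$ is a left $1$-eigenvector of $\mathbf{M}_1$; differentiating the normalization $\nu^T\mathbf{1}=1$ yields $\Delta^T\mathbf{1}=0$, and by uniqueness of the invariant distribution this forces $\Delta=0$.

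The only genuinely non-routine step is the commutativity $P_\sigma \mathbf{M}_1(\p,\p) = \mathbf{M}_1(\p,\p)P_\sigma$ on the diagonal, which hinges on the factorized form of the transition matrix being compatible with the involution that swaps the two "asymmetric" states. Everything after that is a clean combination of implicit differentiation of $\nu^T\mathbf{M}_1=\nu^T$ with uniqueness of the invariant distribution, and the same skeleton of argument should plausibly extend to the memory-$N$ setting once the appropriate higher-dimensional involutions discussed in Section~\ref{sec: memory N repeated donation games} are identified.
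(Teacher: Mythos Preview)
Your argument is correct. The commutation $P_\sigma\mathbf{M}_1(\p,\p)=\mathbf{M}_1(\p,\p)P_\sigma$ holds for every $\p$ (not only on $\{p_{CD}=p_{DC}\}$), and is precisely the diagonal specialisation of the identity $J^2_1\mathbf{M}_1(\p,\q)J^2_1=\mathbf{M}_1(\q,\p)$ that the paper establishes in Section~\ref{sec:exchanging p and q}; note that your $P_\sigma$ is exactly $J^2_1$. From there your implicit-differentiation step is clean: $\partial_{p_{CD}}\mathbf{M}_1=e_{CD}w^T$ with $w$ depending only on $q_{DC}$, $\partial_{p_{DC}}\mathbf{M}_1=e_{DC}(w')^T$ with $w'$ depending only on $q_{CD}$, these coincide on the locus at $\p=\q$, and the diagonal symmetry $\nu_{CD}=\nu_{DC}$ kills the inhomogeneous term, forcing $\Delta$ into the one-dimensional left $1$-eigenspace with zero mass.

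This is a genuinely different route from what the paper (and the cited reference) do. The paper does not give its own proof here; it quotes the result from \cite{laporte2023adaptive}, where the invariance is obtained by direct computation from the explicit right-hand sides (the formulas reproduced in Appendix~\ref{sec: appendix}). The paper's closely related Lemma~\ref{st: invariants antisymm memory 1} on the conservation of $G_1=p_{CD}-p_{DC}$ for the anti-symmetric part is likewise proved ``by explicit computation''. Your argument, by contrast, is structural and payoff-independent: you show $\partial_{p_{CD}}\nu=\partial_{p_{DC}}\nu$ at the relevant points, so the invariance holds for \emph{every} payoff vector $\mathbf{f}$, not only the donation-game one. The trade-off is that the computational proof immediately gives the full explicit dynamics (useful elsewhere in the paper), whereas your method isolates exactly the symmetry responsible for the invariance and, as you note, points the way to the memory-$N$ generalisation via $J^2_N$.
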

Memory-1 strategies with $p_{CD}=p_{DC}$ are referred to as \textit{counting strategies}. They only depend on {\it how many} of the two players cooperated in the previous round, while being independent of {\it who} cooperated. Following the notation in~\cite{laporte2023adaptive}, we describe the adaptive dynamics for counting strategies in terms of a 3-tuple $(q_2,q_1,q_0)$. The subscript here encodes the number of players who cooperated in the previous round: $p_{CC} = q_2, p_{CD}=p_{DC} = q_1$ and $p_{DD} = q_0$. We provide an explicit form of the equations for counting strategies  in Appendix~\ref{sec: appendix}, equation (\ref{eq: explicit form counting strategy}).

\subsection{Recursive construction of $\mathbf{M}_N$ and the payoff vector }
\label{sec: recursive constructions}
In the following, we describe the general form of the transition matrix for games among memory-$N$ players. 
In general, each entry of this transition matrix can be defined directly (see, for example, \cite{hauert1997effects}). However, in this section we point out that the transition matrix $\mathbf{M}_{N}$ for a game among memory-$N$ players can be constructed recursively from the transition matrix $\mathbf{M}_{N-1}$; additionally, we provide the algorithm for this construction. A similar recursive procedure exists for the payoff vector $\mathbf{f}_{N}$, which we will describe as well. These constructions will later be useful to derive some of the symmetries inherent in these transition matrices, see Section~\ref{sec: memory N repeated donation games}.

Recall that in the indices of elements of $\p$ and $\q$, the choice of the focal player comes first; we introduce some notation leveraging on this duality. 
\begin{definition}
Consider $p_{i_1\ldots i_{2N}}$ or $p_i$, an element of the vector $\mathbf{p}$. To each  $p_i$ we can associate one element of $\q$ that is telling the same "story" of the players' choices. We denote it by $q_{\i_1\ldots \i_{2N
}}$  or $q_{\i}$ for short.
\end{definition}

The connection between indices of $p_{i_1\ldots i_{2N}}$ and   $q_{\i_1\ldots \i_{2N}}$ is as follows: within these indices, consider pairs representing each game. Then any such pair of the form $CC$ or $DD $ in $i_1\ldots i_{2N}$ will remain unchanged. However, every $CD$ will become $DC$ and vice versa.  For example, in a memory 2 game, the element $p_{CDDD}$ will correspond to $q_{DCDD}$ and $p_{CDCD}$ to $q_{DCDC}$. Based on this notation, we can provide the following recursive construction to define the transition matrix $M_N$.

\begin{lemma}
\label{lem: construction matrix}
For $N\!\ge\!2$, let the $2^{2(N-1)}\times2^{2(N-1)}$ transition matrix for the memory-$(N\!-\!1)$ game have the form
\begin{equation}
    \label{eq: memory N-1 matrix}
  \mathbf{M}_{N-1}:=  \begin{pmatrix}
        M_1\\
        M_2\\
        M_3\\
        M_4
    \end{pmatrix}
\end{equation}
where $M_1, M_2, M_3, M_4$ are $2^{2(N-2)}\times2^{2(N-1)}$ matrices that are obtained by  cutting the matrix $\mathbf{M}_{N-1}$ horizontally into four submatrices of equal dimensions. 

Then the  transition  matrix for the  memory $N$ game has the following form:
\begin{equation}
    \label{eq: memory N matrix}
 \mathbf{M}_N  =   \begin{pmatrix}
        M'_1& 0& 0&0\\
        0&M'_2&0&0\\
        0&0&M'_3&0\\
        0&0&0&M'_4\\
        M''_1& 0& 0&0\\
        0&M''_2&0&0\\
        0&0&M''_3&0\\
        0&0&0&M''_4\\
        M'''_1& 0& 0&0\\
        0&M'''_2&0&0\\
        0&0&M'''_3&0\\
        0&0&0&M'''_4\\
        M''''_1& 0& 0&0\\
        0&M''''_2&0&0\\
        0&0&M''''_3&0\\
        0&0&0&M''''_4
    \end{pmatrix}.
\end{equation}
Here,  "0" denotes a $2^{2(N-2)}\times2^{2(N-1)}$  matrix consisting of zeros. The dashed matrices are constructed in the following way: to the beginning of the list of indices of each $p_{i_1\ldots i_{2(N-1)}}$ from any of the matrices $M_i$ we append   $CC$, $CD$, $DC$ or $DD$, to make it respectively into $M'_i$, $M''_i$, $M'''_i$ or $M'''_i$.
\end{lemma}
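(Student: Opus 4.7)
My plan is to work directly from the definition of $\mathbf{M}_N$ as the transition matrix of the Markov chain on the $2^{2N}$ memory-$N$ states, and then verify that, under the lexicographic ordering, the window-sliding property of the chain produces exactly the block pattern of equation~(\ref{eq: memory N matrix}). Writing a memory-$N$ state as $s = (x_1, x_2, \ldots, x_N)$ with $x_k \in \{CC, CD, DC, DD\}$ (oldest to most recent), the chain slides the window, so a successor $s' = (x_1', \ldots, x_N')$ has $[\mathbf{M}_N]_{s,s'} = 0$ unless $x_k' = x_{k+1}$ for $k = 1, \ldots, N-1$; when this matching condition holds, the entry is a product of two factors determined by the free index $x_N'$, one involving $p_{x_1 \ldots x_N}$ and one involving $q_{\bar{x}_1 \ldots \bar{x}_N}$.

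Next, I would exploit the lexicographic ordering to produce the stated block layout. Grouping the rows of $\mathbf{M}_N$ by the leading two pairs $(x_1, x_2)$ partitions them into $16$ groups of size $2^{2(N-2)}$, while grouping the columns by the single leading pair $x_1'$ partitions them into $4$ groups of size $2^{2(N-1)}$. This is exactly the $16 \times 4$ arrangement in the lemma, and the matching condition $x_1' = x_2$ immediately forces every block with $x_2 \neq x_1'$ to vanish, producing the sparsity pattern of zeros displayed in equation~(\ref{eq: memory N matrix}).

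It remains to identify each nonzero block. Fixing $x_1 = \alpha$ and $x_2 = x_1' = \beta$, the block's rows are indexed by $(x_3, \ldots, x_N)$ and its columns by $(x_2', \ldots, x_N')$. Comparing with the horizontal slice $M_k$ of $\mathbf{M}_{N-1}$ corresponding to $y_1 = \beta$, via the identification $y_j = x_{j+1}$ and $y_j' = x_{j+1}'$, the matching patterns coincide; the only difference is that each strategy entry in the memory-$N$ block carries an extra leading pair $\alpha$ on the $p$-index, which under the pairwise bar convention automatically induces a leading $\bar{\alpha}$ on the $q$-index. This is precisely the prepending operation that turns $M_k$ into $M'_k$, $M''_k$, $M'''_k$, or $M''''_k$ as $\alpha$ ranges over $CC$, $CD$, $DC$, $DD$. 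The main obstacle is purely bookkeeping: keeping straight the two indexing conventions (memory-$N$ versus memory-$(N-1)$), the two-level row grouping by $x_1$ then $x_2$, and the compatibility of the bar involution with the prepending rule. Once these conventions are lined up, the lemma follows by a direct cell-by-cell comparison of transition probabilities, with no analytic argument required.
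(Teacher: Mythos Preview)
Your proof is correct. Both you and the paper exploit the same underlying combinatorial fact---the sliding-window property that a transition from state $(x_1,\ldots,x_N)$ to $(x_1',\ldots,x_N')$ is only possible when $x_k'=x_{k+1}$ for $k<N$---but you organise the argument differently. The paper frames the proof as an induction and reads off the structure by examining the relation $\nu(k)^T\mathbf{M}_N=\nu(k+1)^T$ coordinate by coordinate, tracking which entries of $\nu(k)$ contribute to each entry of $\nu(k+1)$ and thereby locating the quadruples~(\ref{eq: p quadruples}) along four diagonals. You instead work directly with the matrix entries $[\mathbf{M}_N]_{s,s'}$, partition rows by the leading pair $(x_1,x_2)$ and columns by $x_1'$, and match each nonzero block against the horizontal slice $M_k$ of $\mathbf{M}_{N-1}$ via the bijection $y_j=x_{j+1}$, $y_j'=x_{j+1}'$. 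Your route is somewhat more transparent, since it avoids the detour through the dual action on distributions and makes the role of the lexicographic ordering and the prepending rule explicit; the paper's version, on the other hand, has the advantage of directly setting up the ``four-diagonal with quadruples'' description that is repeatedly invoked in the later symmetry arguments of Section~\ref{sec: memory N repeated donation games}.
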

\begin{proof}
The statement is correct for $N=2$. Assume that it holds for some $N-1$. This means that the overall  structure of the matrix $\mathbf{M}_{N-1}$ will be  as in (\ref{eq: memory N matrix}), with four  "diagonals" made up of the following quadruples (they occupy four consecutive  entries in each row of  the transition matrix):

\begin{equation}
\label{eq: p quadruples}
p_{i}q_{\bar{i}} \ \ \ \ p_{i}(1-q_{\bar{i}})\ \ \ \ (1-p_{i})q_{\bar{i}}\ \ \ \ (1-p_{i})(1-q_{\bar{i}})
\end{equation}

Following all four diagonals down and to the right, one descends in lexicographic order of $p_{i_1\ldots i_{2N-2}}$ (but not in the lexicographic order of $q_{i_1\ldots i_{2N-2}}$!). 

   Let the probability distribution for the cooperation of player $\p$ at $k$th step be denoted by $\nu(k)$ -- we introduce the index to avoid confusion with the invariant distribution.  Then the following relation holds for $\nu(k+1)$:
    \[
    \nu(k)^T\mathbf{M}_N = \nu^T(k+1)
    \]
 Since in $\nu(k+1)$  the oldest games correspond to indices on the left and the memory $N$ is a fixed number,  with each new step in the game, the  two leftmost indices must be discarded, and two new indices must be appended on the right.
 
 These new entries, describing the choices made in the $k+1$st step in the game, are determined by the entries of $\nu(k)$ and the elements of the matrix $\mathbf{M}_N$. For example,
    \begin{equation}
    \begin{split}
          \nu_{i_1i_2 i_3i_4\ldots i_{2N-2} CC}(k+1) &= \nu_{CC i_1\ldots i_{2N-2}}(k)p_{CC i_1\ldots i_{2N-2}}q_{CC \i_1\ldots \i_{2N-2}} +\\& \nu_{CD i_1\ldots i_{2N-2}}(k)p_{CD i_1\ldots i_{2N-2}}q_{DC \i_1\ldots \i_{2N-2}} + \\&\nu_{DC i_1\ldots i_{2N-2}}(k)p_{DC i_1\ldots i_{2N-2}}q_{CD \i_1\ldots \i_{2N-2}} +\\&\nu_{DD i_1\ldots i_{2N-2}}(k)p_{DD i_1\ldots i_{2N-2}}q_{DD \i_1\ldots \i_{2N-2}}.
          \end{split}
       \end{equation}
       Analogous equalities hold for $\nu_{i_1i_2 i_3i_4\ldots i_{2N-2} CD}(k+1) $, $\nu_{i_1i_2 i_3i_4\ldots i_{2N-2} DC}(k+1) $ and $\nu_{i_1i_2 i_3i_4\ldots i_{2N-2} DD}(k+1) $: in those cases  we multiply the antries of $\nu(k)$ by  $(1-q_{\bar{i}})p_i$, $(1-p_i)q_{\bar{i}}$ or $(1-p_i)(1-q_{\bar{i}})$ respectively. 
       
       Therefore, the last two indices in any element of $\nu(k+1)$ are entirely  determined by what functions of $p_i$ and $q_{\bar{i}}$ we multiply by. Since $\nu(k+1)$ is arranged in lexicographic order, the four elements  $\nu_{i_1i_2 i_3i_4\ldots i_{2N-2} CC}$, $\nu_{i_1i_2 i_3i_4\ldots i_{2N-2} CD}(k+1) $, $\nu_{i_1i_2 i_3i_4\ldots i_{2N-2} DC}(k+1) $ and $\nu_{i_1i_2 i_3i_4\ldots i_{2N-2} DD}(k+1) $ must be together in this exact order: since we are multiplying them by the same entries of $\nu(k)$ this means that the rows of  $\mathbf{M}_N$ have  quadruples of the form (\ref{eq: p quadruples}).

       We go through the remaining indices $i_1,\ldots i_{2N-2}$ in lexicographic order; with each change we "descend" four coordinates in $\nu(k+1)$ (since the last two indices are determined by  the functions of $p_i$ and $q_{\bar{i}}$); therefore, the number of the row on which each consecutive quadruple lies must also increase by 1. Since the sum consists of four elements, we must have the same four-diagonal structure as in (\ref{eq: memory N matrix}) for $\mathbf{M}_N$, with quadruples (\ref{eq: p quadruples}) arranged in lexicographic order in $p$. This is precisely the structure that we obtain by the construction described in the statement of the Lemma.         
\end{proof}
In addition to formalising the form of the transition matrix, we want to describe a recursive way of constructing the payoff vector. 
\begin{lemma}
\label{st: recursive payoff vector}
    Suppose the payoff matrix has the form 
    \[
    \begin{pmatrix}
        R&S\\
        T&P
    \end{pmatrix}.
    \]
    Then the payoff vector $\mathbf{f}_N$ for the memory-$N$ game can be constructed recursively. Let 
    \begin{equation}
    \label{eq: recursive noq quite payoff}
    \begin{split}
       \widetilde{ \mathbf{f}}_1  = \begin{pmatrix} R\\S\\T\\P \end{pmatrix}, \ \ \widetilde{\mathbf{f}}_{N} = \begin{pmatrix} \widetilde{\mathbf{f}}_{N-1}+R\mathbf{1}_{N-1}\\\widetilde{\mathbf{f}}_{N-1}+S\mathbf{1}_{N-1}\\\widetilde{\mathbf{f}}_{N-1}+T\mathbf{1}_{N-1}\\\widetilde{\mathbf{f}}_{N-1}+P\mathbf{1}_{N-1}\end{pmatrix}, 
        \end{split}
    \end{equation}
    
    where $\mathbf{1}_N$ is a $2^{2N}$ dimensional vector whose entries are all equal to 1. Then 
    \begin{equation}
        \mathbf{f}_N =\frac{1}{N}\widetilde{\mathbf{f}_N}.
    \end{equation}
\end{lemma}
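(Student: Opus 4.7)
The plan is to first pin down an explicit closed-form expression for the entries of $\mathbf{f}_N$, and then verify the recursive construction by induction on $N$. I expect the payoff entry at the state $(i_1,\ldots,i_{2N})$ to be the arithmetic mean of the $N$ single-round payoffs realised in that window, i.e.
\[
(\mathbf{f}_N)_{i_1\ldots i_{2N}} \;=\; \frac{1}{N}\sum_{k=1}^{N} g(i_{2k-1}, i_{2k}),
\]
where $g(C,C)=R$, $g(C,D)=S$, $g(D,C)=T$, $g(D,D)=P$. To justify this form, I would appeal to the fact that $A(\p,\q)$ is meant to be the focal player's average per-round payoff at stationarity. Since $\nu$ is invariant under the one-step dynamics of the Markov chain on $N$-round windows, the marginal distribution of the $k$-th pair $(i_{2k-1},i_{2k})$ does not depend on $k$. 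Hence defining $\mathbf{f}_N$ via the most recent round's payoff alone or via the average across all $N$ rounds produces the same value of $\langle \nu,\mathbf{f}_N\rangle$; the averaged choice is the symmetric one and matches the lemma statement.

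Having fixed this formula, I would set $\widetilde{\mathbf{f}}_N := N\,\mathbf{f}_N$, so that its entries are the raw sums (not averages) of the $N$ per-round payoffs, and then prove the stated recursion by induction on $N$. The base case $N=1$ is immediate, since $\widetilde{\mathbf{f}}_1=(R,S,T,P)^T$ agrees with the single-round payoff matrix. For the inductive step, partition the $2^{2N}$ indices of $\widetilde{\mathbf{f}}_N$ according to the oldest pair $(i_1,i_2)$. Because entries are arranged lexicographically with $C$ preceding $D$, the indices with $(i_1,i_2)=CC$ occupy the first $2^{2(N-1)}$ positions, followed by $CD$, then $DC$, then $DD$. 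Within each block, the tail $(i_3,\ldots,i_{2N})$ runs over all memory-$(N-1)$ states in lexicographic order, and the sum of per-round payoffs decomposes as $g(i_1,i_2)+\sum_{k=2}^{N}g(i_{2k-1},i_{2k})$. By the inductive hypothesis, the tail sum on each block equals $\widetilde{\mathbf{f}}_{N-1}$, while the leading term is a constant ($R$, $S$, $T$, or $P$) added uniformly across that block. This reproduces exactly the stacked structure displayed in the lemma, and dividing by $N$ converts $\widetilde{\mathbf{f}}_N$ into $\mathbf{f}_N$.

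The main obstacle is not the induction itself, which is essentially bookkeeping in the lexicographic ordering, but pinning down the explicit formula for the entries of $\mathbf{f}_N$ at the start. The paper characterises $\mathbf{f}_N$ only implicitly through the identity $A(\p,\q)=\langle\nu,\mathbf{f}_N\rangle$, which on its own leaves a genuine ambiguity: the payoff of the most recent round, of the oldest round, or any convex combination of the $N$ per-round payoffs would all fit this identity. Resolving this ambiguity via the shift-invariance of $\nu$ under the Markov chain is the key conceptual step; once the symmetric averaged formula is accepted, the recursion follows immediately by reading off the lexicographic structure block by block.
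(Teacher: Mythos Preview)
Your proposal is correct and follows essentially the same route as the paper: both argue that the entries of $\widetilde{\mathbf{f}}_N$ are sums of per-round payoffs over the $N$-round window, and that the recursive block structure drops out of the lexicographic ordering once you partition by the oldest pair $(i_1,i_2)$. The paper's proof is in fact much terser than yours---it simply asserts this structure as ``purely computational'' and moves on---so your induction is a faithful (and more explicit) rendering of the same idea.

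The one place you go beyond the paper is your discussion of the ambiguity in defining $\mathbf{f}_N$ and its resolution via shift-invariance of $\nu$. The paper does not address this point at all; it treats the lemma as \emph{defining} $\mathbf{f}_N$ rather than deriving it from the identity $A(\p,\q)=\langle\nu,\mathbf{f}_N\rangle$. Your observation that the most-recent-round payoff and the $N$-round average yield the same value of $\langle\nu,\cdot\rangle$ under stationarity is correct and worth keeping, since it explains why the averaged form (with the $1/N$ normalisation) is a legitimate choice among several equivalent ones.
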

\begin{proof}
    The proof is purely computational: The payoff vector has to become four times longer, and the additional term comes from adding one more $C$-$D$ pair to the memory. Since we arrange entries in lexicographic order, we assume that this new pair is placed on the left - this is why we can add $R\mathbf{1}_{N-1}$,$S\mathbf{1}_{N-1}$, $T\mathbf{1}_{N-1}$, $P\mathbf{1}_{N-1}$ in this order. 

    The last step is to 'normalise' the payoff vector to make payoffs comparable to the one-shot (non-repeated) game. This is accomplished by dividing the recursively constructed vector $\widetilde{\mathbf{f}}_N$ by $N$. 
\end{proof}
\begin{remark}
\label{st: remark payoff recursive}
    In Lemma \ref{st: recursive payoff vector}, instead of computing $\widetilde{\mathbf{f}}_N$ recursively, we could also  use the following recursive definition that applies to $\mathbf{f}_N$ directly, 
    \begin{equation}
        \mathbf{f}_N = \begin{pmatrix}
            \frac{N-1}{N}\mathbf{f}_{N-1} + \frac{1}{N}R\mathbf{1}_{N-1}\\
            \frac{N-1}{N}\mathbf{f}_{N-1} + \frac{1}{N}S\mathbf{1}_{N-1}\\
            \frac{N-1}{N}\mathbf{f}_{N-1} + \frac{1}{N}T\mathbf{1}_{N-1}\\
            \frac{N-1}{N}\mathbf{f}_{N-1} + \frac{1}{N}P\mathbf{1}_{N-1}
        \end{pmatrix}.
    \end{equation}
\end{remark}

As a final preparation, we provide a more direct method to compute the payoff function $A(\p,\q)$. 
In our definition~\eqref{eq: payoff function initial}, we give a formula that depends on the Markov chain's invariant distribution $\nu(\p,\q)$. 
This invariant distribution is itself the solution of an implicit equation. 
The following result provides a more immediate payoff formula, based on Press \& Dyson's formalism \cite{press2012iterated}.

\begin{lemma}[\cite{hilbe2017memory}]
\label{st: payoff function}
Consider a transition matrix $\mathbf{M}_N$ and subtract the identity matrix $I$ of the appropriate dimension from it, then replace the last column of the obtained matrix by the vector $\mathbf{f}_N$. We denote such a  matrix by $\begin{pmatrix} \widetilde{\mathbf{M}}_N(\mathbf{p},\mathbf{q}) & \mathbf{f}_N \end{pmatrix}$. If $\mathbf{1}\in \mathbb{R}^{2^{2N}}$ is a vector with all entries equal to 1, then the payoff function $A(\p,\q)$ is given by 
\begin{equation}
    \label{eq:payoff function}
A(\mathbf{p},\mathbf{q}) = \frac{\begin{vmatrix}\widetilde{\mathbf{M}}_N(\mathbf{p},\mathbf{q}) & \mathbf{f}_N\end{vmatrix}}{\begin{vmatrix}\widetilde{\mathbf{M}}_N(\mathbf{p},\mathbf{q}) & \mathbf{1}_N\end{vmatrix}}, 
\end{equation}
where $|M|$ is the determinant of the matrix $M$.
\end{lemma}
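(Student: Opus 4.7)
The plan is to carry out the classical Press--Dyson cofactor argument. Set $B := \mathbf{M}_N - I$ and $n := 2^{2N}$. I will show that for any $\mathbf{v}\in\mathbb{R}^n$ one has $\bigl|\widetilde{\mathbf{M}}_N(\p,\q)\ \mathbf{v}\bigr|=\lambda\left<\nu(\p,\q),\mathbf{v}\right>$, where $\lambda=\lambda(\p,\q)$ is a scalar independent of $\mathbf{v}$. Taking the ratio with $\mathbf{v}=\mathbf{f}_N$ and $\mathbf{v}=\mathbf{1}_N$ then gives the claimed formula, since by definition~\eqref{eq: payoff function initial} one has $A(\p,\q)=\left<\nu,\mathbf{f}_N\right>$, while $\left<\nu,\mathbf{1}_N\right>=\sum_i\nu_i=1$.

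To establish this identity, expand the determinant along the last column: $\bigl|\widetilde{\mathbf{M}}_N\ \mathbf{v}\bigr|=\sum_{i=1}^{n}v_i\,C_i$, where each cofactor $C_i$ is built solely from the first $n-1$ columns of $B$ and so is independent of $\mathbf{v}$. Collecting the cofactors into $\mathbf{c}=(C_1,\ldots,C_n)^T$, the key claim is that $\mathbf{c}^T B = 0$. Indeed, replacing the last column of $B$ by some other column $b_k$ with $k<n$ yields a matrix with two identical columns, whose determinant vanishes; but by the same cofactor expansion this determinant equals $\mathbf{c}^T b_k$. For $k=n$ the expansion recovers $\det B$, which is zero because $\nu^T B = 0$ exhibits a non-trivial left null vector. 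Thus $\mathbf{c}$ lies in the left null space of $B$, which by the standing assumption $\p,\q\in(0,1)^n$ is one-dimensional (since $1$ is a simple eigenvalue of the irreducible stochastic matrix $\mathbf{M}_N$). Hence $\mathbf{c}=\lambda\,\nu$ for some scalar $\lambda$, and substituting back produces $\bigl|\widetilde{\mathbf{M}}_N\ \mathbf{v}\bigr|=\lambda\left<\nu,\mathbf{v}\right>$ as needed.

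The main subtlety, and the only analytic point beyond bookkeeping, is verifying that $\lambda\neq 0$ so that the denominator $\bigl|\widetilde{\mathbf{M}}_N\ \mathbf{1}_N\bigr|=\lambda\cdot 1=\lambda$ does not vanish and the ratio in \eqref{eq:payoff function} is well-defined. Equivalently, one needs $B=\mathbf{M}_N-I$ to have rank exactly $n-1$, which follows once more from the simplicity of the eigenvalue $1$ of the irreducible chain --- the same hypothesis that guarantees the uniqueness of $\nu$ already invoked in the statement of the model. Everything else reduces to the single cofactor identity derived above, so no further computation is required.
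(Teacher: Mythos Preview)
The paper does not supply its own proof of this lemma; it is quoted from \cite{hilbe2017memory} and attributed to the Press--Dyson formalism \cite{press2012iterated}. Your argument is precisely that standard cofactor computation and is correct.

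One small point worth making explicit in your last paragraph: the equivalence ``$\lambda\neq 0 \Longleftrightarrow \operatorname{rank}(B)=n-1$'' tacitly uses that $\mathbf{M}_N$ is row-stochastic. Since $B\mathbf{1}_N=0$, the last column of $B$ equals minus the sum of the first $n-1$ columns, so those $n-1$ columns already span the column space of $B$; hence rank $n-1$ forces them to be linearly independent, which is exactly what guarantees that at least one of your cofactors $C_i$ is nonzero. Without this observation, $\operatorname{rank}(B)=n-1$ alone would only ensure that \emph{some} $(n-1)\times(n-1)$ minor is nonzero, not necessarily one obtained by deleting the last column.
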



\section{Memory N repeated donation game}
\label{sec: memory N repeated donation games}
After these preparations, we start by discussing some general properties of the transition matrix and the payoff vector. The respective results for the memory-$N$ donation game generalise statements from \cite{laporte2023adaptive} and \cite{hilbe2017memory}. They lay the foundation for Section~\ref{sec: adaptive dynamics}, where we study the game's adaptive dynamics.

\subsection{Exchanging $C$ and $D$}
\label{sec: exchanging c and d}

In the following, we are interested in exploring certain symmetries. 
These symmetries arise by interchanging the labels $C$ and $D$ (and later on, by interchanging players 1 and 2). To this end, it will be useful to consider certain transformations of the players' memory-$N$ strategies. 
To interchange $C$ and $D$, we introduce a transformation $\varphi:[0,1]^{2^{2n}}\rightarrow[0,1]^{2^{2n}}$, defined by
$$\varphi\big( \p \big) = \varphi\big( (p_{C\ldots C},\ldots,p_{D\ldots D})\big) = (1-p_{D\ldots D},\ldots,1\!-\!p_{C\ldots C}).$$
The vector $\varphi(\p)$ has the inverse interpretation of $\p$.
For example, the first entry of $\mathbf{p}$ is the player's {\it cooperation} probability, given that both players {\it cooperated} in all previous rounds. 
Instead, the first entry of $\varphi(\p)$ gives the player's {\it defection} probability, given both players {\it defected} in all previous rounds. 
A similar interpretation applies to all other entries. 
In each case, cooperation~($C$) needs to be replaced by defection~($D$) and vice versa. 

To derive a formal statement on the relationship between memory-$N$ strategies and their transformations, consider a skew diagonal $2^{2N}\times 2^{2N}$ matrix with all skew diagonal entries equal to 1. We denote it by $J^8_N$ or $J^8$, when the dimension is clear from the context. More specifically, 
\[
J_N^8:=\begin{pmatrix}
    0&\ldots&0&1\\
    0&\ldots&1&0\\
    \vdots&&&\vdots\\
    1&\ldots&0&0
\end{pmatrix}.
\]

Let $\tau$ be an order 2 transformation of matrices that sends a matrix to its 'transpose' with respect to its skew diagonal: instead of reflecting with respect to the diagonal containing elements $a_{ii}$, we are reflecting with respect to the one comprised out of $a_{i \ n+1-i}$.
Now we can show the main result of this subsection. 
\begin{theorem}
\label{thm: symmetry of M}
$ \mathbf{M}_N\big(\varphi(\p),\varphi(\q)\big) = \left(\mathbf{M}_N(\p,\q)^T\right)^{\tau} =J^8_N\cdot\mathbf{M}_N(\p,\q)\cdot J^8_N$
\end{theorem}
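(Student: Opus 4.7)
The plan is to reduce the theorem to a single entry-wise identity and then verify it by a four-way case check on the most recent round's outcome. First I would dispatch the easy half: for any $n\times n$ matrix with $n=2^{2N}$, both $(M^T)^\tau$ and $J^8_N M J^8_N$ have $(i,j)$-entry equal to $M_{n+1-i,\,n+1-j}$, so these two expressions coincide on the nose. Moreover, reversing lexicographic order (with $C$ before $D$) is exactly the coordinate-wise $C\leftrightarrow D$ flip, which I denote by $i\mapsto i^c$; hence $(J^8_N M J^8_N)_{ij}=M_{i^c,j^c}$. The theorem thus reduces to the single identity
\[
\mathbf{M}_N(\varphi(\p),\varphi(\q))_{ij}=\mathbf{M}_N(\p,\q)_{i^c,j^c}\qquad\text{for all }i,j.
\]

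Next I would invoke the description of $\mathbf{M}_N$ from Lemma~\ref{lem: construction matrix}. Row $i$ is supported on exactly four columns, those of the form $j=(i_3,\dots,i_{2N},j_{2N-1},j_{2N})$, and on these columns the entry is one of
\[
p_iq_{\bar i},\ \ p_i(1-q_{\bar i}),\ \ (1-p_i)q_{\bar i},\ \ (1-p_i)(1-q_{\bar i}),
\]
depending on whether $(j_{2N-1},j_{2N})$ equals $(C,C)$, $(C,D)$, $(D,C)$, or $(D,D)$. This ``legal successor'' condition is invariant under $(i,j)\mapsto(i^c,j^c)$ because the complement acts coordinate-wise, so the sparsity patterns of the two sides agree automatically. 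Applying $\varphi$ substitutes $p_i$ by $1-p_{i^c}$ and $q_{\bar i}$ by $1-q_{\overline{i^c}}$; at this point I would pause to verify the small identity $\overline{i^c}=(\bar i)^c$, which holds because the overline (swap within every $CD/DC$ pair) and the complement act by commuting permutations on each two-letter block. On the right-hand side, passing from $(i,j)$ to $(i^c,j^c)$ turns the new-round label $(j_{2N-1},j_{2N})$ into its complement, so the cases $(C,C)\leftrightarrow(D,D)$ and $(C,D)\leftrightarrow(D,C)$ swap roles. A direct inspection of the four cases then shows that the same monomial in $1-p_{i^c}$ and $1-q_{\overline{i^c}}$ appears on both sides; for example, the $(C,C)$ outcome on the left gives $(1-p_{i^c})(1-q_{\overline{i^c}})$, which is precisely the $(D,D)$ entry of $\mathbf{M}_N(\p,\q)$ at the row--column pair $(i^c,j^c)$.

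The main obstacle is purely notational: one has to track the two commuting involutions $i\mapsto\bar i$ and $i\mapsto i^c$ carefully, and check that the reversal induced by $J^8_N$ on the lexicographic enumeration really coincides with the complement on multi-indices. An alternative route would be to induct on $N$ via the block description in Lemma~\ref{lem: construction matrix}, since under $\varphi$ the four head-blocks $CC,CD,DC,DD$ permute exactly as the skew reflection demands; but the direct entry-wise argument above seems shorter and avoids unpacking the block indexing twice.
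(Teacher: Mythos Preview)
Your argument is correct. The reduction to the entry-wise identity $\mathbf{M}_N(\varphi(\p),\varphi(\q))_{ij}=\mathbf{M}_N(\p,\q)_{i^c,j^c}$ is clean, the check that lexicographic reversal coincides with the coordinate-wise complement is right (it is just bitwise complementation under the encoding $C\mapsto0$, $D\mapsto1$), and the four-case verification goes through exactly as you indicate. The commutation $\overline{i^c}=(\bar i)^c$ is the only spot where a slip could occur, and you handled it.

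Your route is genuinely different from the paper's. The paper proves the theorem by induction on $N$, using the recursive block structure of Lemma~\ref{lem: construction matrix}: it splits the map $\varphi$ into ``flip the oldest pair of indices'' (which permutes the four dash-levels $M'_i,\dots,M''''_i$) followed by the memory-$(N-1)$ flip on the remaining indices (handled by the induction hypothesis), and then argues geometrically that the combined effect is the pair of reflections realising $(M^T)^\tau$. Your approach bypasses induction entirely by working directly with the explicit row-quadruple description~(\ref{eq: p quadruples}) that Lemma~\ref{lem: construction matrix} already provides. What you gain is brevity and transparency: the whole content becomes a single substitution and a $2\times2$ case swap. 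What the paper's approach offers is a closer tie to the recursive block decomposition, which it leans on repeatedly in later sections; but for this particular theorem your direct argument is both shorter and easier to audit.
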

\begin{proof}

We prove the statement by induction on the length $N$ of the memory. 
The statement holds when $N=1$:
\begin{small}
\begin{equation*}
    \begin{split}
        \mathbf{M}_1(\p,\q) &= \begin{pmatrix}
            p_{CC} q_{CC} & p_{CC}(1- q_{CC}) & (1-p_{CC}) q_{CC} & (1-p_{CC})(1- q_{CC})  \\
            p_{CD} q_{DC} & p_{CD}(1- q_{DC}) & (1-p_{CD}) q_{DC} & (1-p_{DC})(1- q_{CD})  \\
            p_{DC} q_{CD} & p_{DC}(1- q_{CD}) & (1-p_{DC}) q_{CD} & (1-p_{DC})(1- q_{CD})  \\
            p_{DD} q_{DD} & p_{DD}(1- q_{DD}) & (1-p_{DD}) q_{DD} & (1-p_{DD})(1- q_{DD})  
        \end{pmatrix},\\
        \mathbf{M}_1\big(\varphi(\p),\varphi(\q)\big) &= \begin{pmatrix}
          (1-p_{DD})(1- q_{DD})&(1-p_{DD})q_{DD}&p_{DD}(1- q_{DD})& p_{DD} q_{DD}\\
          (1-p_{DC})(1- q_{CD}) & (1-p_{DC}) q_{CD}&p_{DC}(1- q_{CD})&p_{DC} q_{CD}\\
          (1-p_{DC})(1- q_{CD}) &(1-p_{CD}) q_{DC}&p_{CD}(1- q_{DC})& p_{CD} q_{DC}\\
          (1-p_{CC})(1- q_{CC}) &(1-p_{CC}) q_{CC}&p_{CC}(1- q_{CC})& p_{CC} q_{CC}
          \end{pmatrix}\\
          &= \left(\left(\mathbf{M}_1(p_{CC},\ldots q_{DD})\right)^T\right)^{\tau}
    \end{split}
\end{equation*}
\end{small}
Suppose now the claim holds for $N-1$; we set out to prove it for memory-$N$. To this end, we will make extensive use of the way we have recursively constructed transition matrices in Lemma \ref{lem: construction matrix}. For brevity, let us denote the respective matrices $M_i', M_i'', M_i'''$ and $M_i''''$ by $M_i^n$, where $n$ is the number of dashes. 

Consider $p_{i_1i_2\ldots i_{2N}}$ and its lexicographic reverse (we replace $C$ by $D$ and vice versa) $p_{j_1j_2\ldots j_{2N}}$. This operation  can be split up into two parts:
\begin{equation}
\label{eq: mapping}
    p_{i_1i_2i_3i_4\ldots i_{2N}}\mapsto p_{j_1j_2i_3i_4\ldots i_{2N}}\mapsto 1-p_{j_1j_2\ldots j_{2N}}. 
\end{equation}
Consider the first mapping. It reverses the first two indices; therefore, it is the pairwise exchange of the blocks of the matrix $\mathbf{M}_N$:
\begin{equation}
\label{eq: firstchange}
    \begin{pmatrix}
        M'_1& 0& 0&0\\
        0&M'_2&0&0\\
        0&0&M'_3&0\\
        0&0&0&M'_4\\
        M''_1& 0& 0&0\\
        0&M''_2&0&0\\
        0&0&M''_3&0\\
        0&0&0&M''_4\\
        M'''_1& 0& 0&0\\
        0&M'''_2&0&0\\
        0&0&M'''_3&0\\
        0&0&0&M'''_4\\
        M''''_1& 0& 0&0\\
        0&M''''_2&0&0\\
        0&0&M''''_3&0\\
        0&0&0&M''''_4
    \end{pmatrix}\to \begin{pmatrix}
        M''''_1& 0& 0&0\\
        0&M''''_2&0&0\\
        0&0&M''''_3&0\\
        0&0&0&M''''_4\\
        M'''_1& 0& 0&0\\
        0&M'''_2&0&0\\
        0&0&M'''_3&0\\
        0&0&0&M'''_4\\
        M''_1& 0& 0&0\\
        0&M''_2&0&0\\
        0&0&M''_3&0\\
        0&0&0&M''_4\\
        M'_1& 0& 0&0\\
        0&M'_2&0&0\\
        0&0&M'_3&0\\
        0&0&0&M'_4
    \end{pmatrix}
\end{equation}

The matrices $M_i^n$ themselves do not change. Now, the second operation from (\ref{eq: mapping}) is the lexicographic reverse for the memory $N-1$ game.  Let us examine  how it acts on  each of the blocks of the form
\begin{equation}
\label{eq:block}
\begin{pmatrix}
 M_1^n& 0& 0&0\\
        0&M^n_2&0&0\\
        0&0&M^n_3&0\\
        0&0&0&M^n_4
        \end{pmatrix}.
\end{equation}

As per our assumption, the statement of the theorem holds for the matrix $\mathbf{M}_{N-1}$. Therefore, the index exchange as in the  second mapping of (\ref{eq: mapping}) acts on the matrix $\mathbf{M}_{N-1}$ as the composition of two transpositions. 

One can observe that for  a square matrix $M$ of even dimension $(M^T)^{\tau} = (M^{\tau})^T$ and the geometric interpretation of this double transposition  is the combination of a vertical and a horizontal reflections of the matrix with respect to  the two orthogonal  axes drawn through its centre. We need the following key

\begin{observation}
    The pairwise  exchange of nonzero elements obtained from the combination of the horizontal and vertical reflections through the centre are the same for the matrix  (\ref{eq: memory N-1 matrix}) and the matrix (\ref{eq:block}). 
\end{observation}

Geometrically, that means that for all matrices $M^n_i$ all of their columns and rows are rewritten in the opposite order, and additionally, $M_1^n$ is exchanged with $M_4^n$, and $M_2^n$ with $M_3^n$. This happens for both (\ref{eq: memory N-1 matrix}) and (\ref{eq:block}), and the pairwise exchange of the elements within the matrices is identical in the two cases. 

Therefore, by the induction assumption,  after the change of variables, each of the blocks of the form (\ref{eq:block}) in the matrix (\ref{eq: memory N matrix}) undergoes two reflections: vertical and horizontal, with respect to the axes drawn through its centre. Together with the pairwise exchange of the blocks from the first mapping, this can be seen to be two reflections of the entire matrix $\mathbf{M}_N$, again with respect to the two orthogonal symmetry axes through the centre, which we know to be the same as the combination of the two transpositions. This proves our initial claim.

Lastly, it can be easily checked that the conjugation by $J^8$ is identical to the two transpositions, since multiplication by $J^8$ from the left reverses the order of the rows, and from the right the columns.
\end{proof}
The vector $J^8_N\mathbf{f}_N$ is connected to the vector $\mathbf{f}_N$ in the following way:
\begin{lemma}
\label{st: lemma payoff vector zero sum}
    For games with equal gains from switching, payoff vectors as constructed in Lemma \ref{st: recursive payoff vector} satisfy the following relationship,
    \begin{equation}
        -\mathbf{f}_N + K_N\mathbf{1}_N = J_N^8\mathbf{f}_N,
    \end{equation}
    where $K_N$ is a constant depending on $N$.
   
\end{lemma}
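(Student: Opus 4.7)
The plan is to read off that $J_N^8\mathbf{f}_N$ is simply $\mathbf{f}_N$ with its entries reversed, identify this reversal with bit-wise negation of the lexicographic index, and then check that each entry together with its ``opposite'' entry sums to the constant $R+P$. This immediately gives $\mathbf{f}_N+J_N^8\mathbf{f}_N=(R+P)\mathbf{1}_N$, hence the claim with $K_N=R+P$ (in particular, $K_N$ is independent of $N$).

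First I would make precise what $J_N^8\mathbf{f}_N$ does: because $J_N^8$ is the anti-diagonal matrix of $1$'s, $(J_N^8\mathbf{f}_N)_i=(\mathbf{f}_N)_{2^{2N}+1-i}$, i.e.\ it reverses the list of entries. Since entries are listed in lexicographic order with $C$ before $D$, the reversed position corresponds to the index $(\bar{i}_1,\ldots,\bar{i}_{2N})$ obtained by swapping $C\leftrightarrow D$ everywhere in $(i_1,\ldots,i_{2N})$. This is a short bit-counting observation and is the only potentially fiddly step.

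Next I would establish, by induction on $N$ using Lemma~\ref{st: recursive payoff vector}, the explicit formula
\[
\widetilde{\mathbf{f}}_N[i_1\ldots i_{2N}] \;=\; \sum_{k=1}^{N}\pi(i_{2k-1},i_{2k}),
\]
where $\pi(C,C)=R$, $\pi(C,D)=S$, $\pi(D,C)=T$, $\pi(D,D)=P$. The base case $N=1$ is by definition, and the inductive step is a direct reading of the four-block recursion: prepending the pair $CC$, $CD$, $DC$, $DD$ on the left adds exactly $R$, $S$, $T$, $P$ to the corresponding block.

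Finally I would invoke equal gains from switching, $R+P=T+S$, in the form $\pi(i,j)+\pi(\bar{i},\bar{j})=R+P$ for every pair $(i,j)\in\{C,D\}^2$ (the only two cases are $\{(C,C),(D,D)\}$ and $\{(C,D),(D,C)\}$, and each gives $R+P$ or $S+T$ respectively, which coincide). Summing over the $N$ rounds yields $\widetilde{\mathbf{f}}_N[i]+\widetilde{\mathbf{f}}_N[\bar{i}]=N(R+P)$, and dividing by $N$ gives $\mathbf{f}_N[i]+\mathbf{f}_N[\bar{i}]=R+P$. In vector form this is $\mathbf{f}_N+J_N^8\mathbf{f}_N=(R+P)\mathbf{1}_N$, which rearranges to the statement with $K_N=R+P$.
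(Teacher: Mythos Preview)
Your proof is correct. Both your argument and the paper's proceed by induction on $N$ using the recursive construction of $\mathbf{f}_N$ and the equal-gains condition $R+P=S+T$, so the overall shape is the same. The difference is in the intermediate step: the paper manipulates the four-block recursion for $\mathbf{f}_N$ directly and produces a recursive formula $K_N=\tfrac{N-1}{N}K_{N-1}+\tfrac{R+P}{N}$ for the constant, whereas you first extract the explicit entrywise formula $\mathbf{f}_N[i_1\ldots i_{2N}]=\tfrac{1}{N}\sum_{k}\pi(i_{2k-1},i_{2k})$ and then pair each index with its $C\leftrightarrow D$ complement. Your route makes the role of equal gains from switching completely transparent (it reduces to $\pi(i,j)+\pi(\bar i,\bar j)=R+P$ round by round) and yields the explicit value $K_N=R+P$, independent of $N$; the paper's recursion also solves to $K_N=R+P$, but this is not stated there. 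The paper's version has the minor advantage of never needing to spell out the bit-reversal interpretation of $J_N^8$, which you identify as the only fiddly point.
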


\begin{proof}
   Again, we proceed by induction. As demonstrated in \cite{laporte2023adaptive}, the statement is true for memory-1, since 
    \[
    \begin{pmatrix}
        -R\\
        -S\\
        -T\\
        -P\\
    \end{pmatrix}+ \begin{pmatrix} R+P\\S+T\\S+T\\R+P\end{pmatrix} = \begin{pmatrix}
        P\\T\\R\\S
    \end{pmatrix}. 
    \]
Here we have used the fact that $R+P=S+T=:K_1$. 

Assume that the statement is true for memory $N-1$. This implies existence of a constant  $K_{N-1}$, such that 
\[
-\mathbf{f}_{N-1}  + K_{N-1}\mathbf{1} = J_{N-1}\mathbf{f}_{N-1}
\]
 Lemma \ref{st: lemma payoff vector zero sum}  and Remark \ref{st: remark payoff recursive} describe the payoff vector for memory $N$; we are going to use the recursive construction described in the latter. 
Setting $K_N = \frac{N-1}{N}K_{N-1}+\frac{R+P}{N} $, we   introduce the following series of transformations for $\mathbf{f}_{N}$:
 \begin{small}
\begin{equation}
\label{eq: turning over profit}
\begin{split}
\mathbf{f}_{N} &=\begin{pmatrix}\frac{N-1}{N}\mathbf{f}_{N-1}+\frac{R}{N}\mathbf{1}_{N-1}\\\frac{N-1}{N}\mathbf{f}_{N-1}+\frac{S}{N}\mathbf{1}_{N-1}\\\frac{N-1}{N}\mathbf{f}_{N-1}+\frac{T}{N}\mathbf{1}_{N-1}\\\frac{N-1}{N}\mathbf{f}_{N-1}+\frac{P}{N}\mathbf{1}_{N-1}\end{pmatrix}\mapsto - \mathbf{f}_N = \begin{pmatrix}-\frac{N-1}{N}\mathbf{f}_{N-1}-\frac{R}{N}\mathbf{1}_{N-1}\\-\frac{N-1}{N}\mathbf{f}_{N-1}-\frac{S}{N}\mathbf{1}_{N-1}\\-\frac{N-1}{N}\mathbf{f}_{N-1}-\frac{T}{N}\mathbf{1}_{N-1}\\-\frac{N-1}{N}\mathbf{f}_{N-1}-\frac{P}{N}\mathbf{1}_{N-1} \end{pmatrix}\mapsto - \mathbf{f}_N + K_N\mathbf{1}_N \\ &= \begin{pmatrix}-\frac{N-1}{N}\mathbf{f}_{N-1}-\frac{R}{N}\mathbf{1}_{N-1}\\-\frac{N-1}{N}\mathbf{f}_{N-1}-\frac{S}{N}\mathbf{1}_{N-1}\\-\frac{N-1}{N}\mathbf{f}_{N-1}-\frac{T}{N}\mathbf{1}_{N-1}\\-\frac{N-1}{N}\mathbf{f}_{N-1}-\frac{P}{N}\mathbf{1}_{N-1} \end{pmatrix} + \begin{pmatrix}\frac{N-1}{N}K_{N-1}\mathbf{1}_{N-1} +\frac{R+P}{N}\mathbf{1}_{N-1}\\\frac{N-1}{N}K_{N-1}\mathbf{1}_{N-1} +\frac{S+T}{N}\mathbf{1}_{N-1}\\\frac{N-1}{N}K_{N-1}\mathbf{1}_{N-1} +\frac{S+T}{N}\mathbf{1}_{N-1}\\\frac{N-1}{N} K_{N-1}\mathbf{1}_{N-1} +\frac{R+P}{N}\mathbf{1}_{N-1}
    \end{pmatrix}\\&= \begin{pmatrix} \frac{N-1}{N}J_{N-1}^8\mathbf{f}_{N-1} + \frac{P}{N}\mathbf{1}_{N-1}\\\frac{N-1}{N}J_{N-1}^8\mathbf{f}_{N-1}+\frac{T}{N}\mathbf{1}_{N-1}\\\frac{N-1}{N}J_{N-1}^8\mathbf{f}_{N-1}+\frac{S}{N}\mathbf{1}_{N-1}\\\frac{N-1}{N}J_{N-1}^8\mathbf{f}_{N-1}+\frac{R}{N}\mathbf{1}_{N-1}\end{pmatrix}
     \end{split}
\end{equation}
\end{small}
The last expression in (\ref{eq: turning over profit}) is equal to $J_{N}^8\mathbf{f}_{N}$, which completes our proof. 
\end{proof}
\subsection{Exchanging $\p$ and $\q$}
\label{sec:exchanging p and q}
In this section, we decompose the payoff function $A(\p,\q)$ into two parts, $A_s(\p,\q)$ and $A_a(\p,\q)$. 
These two parts are respectively symmetric and anti-symmetric with respect to exchanging $\p$ and $\q$. Moreover, we demonstrate that the two expressions only differ in the associated payoff vector. This construction will become useful later in Section \ref{sec: adaptive dynamics}. 

\noindent
By Lemma \ref{st: payoff function}, the payoff function has the explicit form
\begin{equation*}
 A(\mathbf{p},\mathbf{q}) = \frac{\begin{vmatrix}\widetilde{\mathbf{M}}_N(\mathbf{p},\mathbf{q}) & \mathbf{f}_N\end{vmatrix}}{\begin{vmatrix}\widetilde{\mathbf{M}}_N(\mathbf{p},\mathbf{q}) & \mathbf{1}_N\end{vmatrix}}.
\end{equation*}
Also consider $J^2_N$, a recursively constructed matrix:
    $$
    J^2_1 := \begin{pmatrix}
        1&0&0&0\\
        0&0&1&0\\
        0&1&0&0\\
        0&0&0&1
    \end{pmatrix}
    $$
    Then for arbitrary $N\ge2$, let 
    \begin{equation}
    \label{eq: recursive J2}
    J^2_N:= \begin{pmatrix}
        J_{N-1}^2&0&0&0\\
        0&0& J_{N-1}^2&0\\
        0& J_{N-1}^2&0&0\\
        0&0&0& J_{N-1}^2
    \end{pmatrix}.
      \end{equation}
As with the matrix $J_N^8$, we will omit $N$ when the context permits and write $J^2$ instead.

\begin{theorem}
\label{st: decomposition memory N}
    For arbitrary memory $N$, the payoff function can be decomposed into a symmetric and a skew-symmetric part. Specifically, 
    \begin{equation*}
        A(\p,\q) = A_s(\p,\q) + A_a(\p,\q).
    \end{equation*}
   Here,
    \begin{equation}
A_{s}(\mathbf{p},\mathbf{q})
 = \frac12 \left(A(\p,\q) + A(\q,\p)\right) = \frac12\frac{\begin{vmatrix}\widetilde{\mathbf{M}}_N(\mathbf{p},\mathbf{q}) & J_N^2\mathbf{f}_N + \mathbf{f}_N\end{vmatrix}}{\begin{vmatrix}\widetilde{\mathbf{M}}_N(\mathbf{p},\mathbf{q}) & \mathbf{1}_N\end{vmatrix}},
\end{equation}
is a symmetric function in $\p$ and $\q$, that is $A_s(\mathbf{p},\mathbf{q})=A_s(\mathbf{q},\mathbf{p})$. Similarly, 
\begin{equation}
  A_{a}(\mathbf{p},\mathbf{q}) = \frac12\left(A(\p,\q) -A(\q,\p)\right) =  \frac12 \frac{\begin{vmatrix}\widetilde{\mathbf{M}}_N(\mathbf{p},\mathbf{q}) & \mathbf{f}_N-J_N^2\mathbf{f}_N \end{vmatrix}}{\begin{vmatrix}\widetilde{\mathbf{M}}_N(\mathbf{p},\mathbf{q}) & \mathbf{1}_N\end{vmatrix}},
\end{equation}
is an anti-symmetric function in $\p$ and $\q$, such that $A_a(\mathbf{p},\mathbf{q})=-A_a(\mathbf{q},\mathbf{q})$.  
\end{theorem}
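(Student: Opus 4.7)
The additive splitting $A(\p,\q) = A_s(\p,\q)+A_a(\p,\q)$ with $A_s=\tfrac12(A(\p,\q)+A(\q,\p))$ and $A_a=\tfrac12(A(\p,\q)-A(\q,\p))$ is tautological, and the (anti-)symmetry of the two summands under exchanging $\p$ and $\q$ follows immediately from their definitions. So the real content of the theorem is the single identity
\[
A(\q,\p) \;=\; \frac{\bigl|\,\widetilde{\mathbf{M}}_N(\p,\q)\ \ J_N^2\mathbf{f}_N\,\bigr|}{\bigl|\,\widetilde{\mathbf{M}}_N(\p,\q)\ \ \mathbf{1}_N\,\bigr|},
\]
which expresses the swapped payoff through $\widetilde{\mathbf{M}}_N(\p,\q)$ rather than $\widetilde{\mathbf{M}}_N(\q,\p)$. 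Once this is in hand, multilinearity of the determinant in its last column combines numerators and recovers the formulas for $A_s$ and $A_a$ claimed in the theorem.

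To prove this identity I would first reinterpret $J_N^2$ as the permutation matrix whose action on a state-indexed vector is $(J_N^2\mathbf{v})_i = v_{\bar{i}}$, that is, it swaps the two entries within every pair $(i_{2k-1},i_{2k})$ of the state label. The recursive definition (\ref{eq: recursive J2}) implements exactly this permutation, and a short induction shows $J_N^2$ is symmetric and involutive, so $(J_N^2)^{-1}=(J_N^2)^T=J_N^2$. The key matrix identity I need is
\[
\mathbf{M}_N(\q,\p) \;=\; J_N^2\,\mathbf{M}_N(\p,\q)\,J_N^2,
\]
which says that exchanging the two players' strategies is the same, at the level of the Markov chain, as relabeling every state by the player-swap permutation. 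I would prove this by induction on $N$, reading off the quadruple $\bigl(p_i q_{\bar{i}},\ p_i(1-q_{\bar{i}}),\ (1-p_i)q_{\bar{i}},\ (1-p_i)(1-q_{\bar{i}})\bigr)$ appearing in each four-diagonal row of $\mathbf{M}_N(\p,\q)$ from Lemma~\ref{lem: construction matrix}, and observing that swapping $\p\leftrightarrow\q$ rearranges this quadruple in exactly the way that simultaneous row and column permutations by $J_N^2$ do (swapping the second and third outer blocks, and, by the induction hypothesis, performing the analogous swap inside each inner block).

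Given the matrix identity, the stationary equation $\nu(\q,\p)^T\mathbf{M}_N(\q,\p)=\nu(\q,\p)^T$ rewrites as $(\nu(\q,\p)^T J_N^2)\mathbf{M}_N(\p,\q)=\nu(\q,\p)^T J_N^2$, and uniqueness of the normalised stationary distribution (together with the fact that $J_N^2$ preserves the $\ell^1$-norm) yields $\nu(\q,\p) = J_N^2\nu(\p,\q)$. Consequently
\[
A(\q,\p) \;=\; \mathbf{f}_N^T\nu(\q,\p) \;=\; \mathbf{f}_N^T J_N^2\nu(\p,\q) \;=\; (J_N^2\mathbf{f}_N)^T\nu(\p,\q),
\]
and applying Lemma~\ref{st: payoff function} with $J_N^2\mathbf{f}_N$ in place of $\mathbf{f}_N$ produces the displayed determinant identity for $A(\q,\p)$. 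All the remaining ingredients --- uniqueness of the Perron eigenvector, Lemma~\ref{st: payoff function}, and multilinearity of the determinant in its last column --- are standard. The main obstacle is therefore the matrix identity $\mathbf{M}_N(\q,\p)=J_N^2\mathbf{M}_N(\p,\q)J_N^2$: conceptually it is just a relabeling of states, but verifying it rigorously demands careful bookkeeping to align the four-diagonal recursion of Lemma~\ref{lem: construction matrix} with the differently placed block recursion defining $J_N^2$ in (\ref{eq: recursive J2}).
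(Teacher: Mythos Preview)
Your proposal is correct and follows essentially the same route as the paper: both reduce the theorem to the conjugation identity $\mathbf{M}_N(\q,\p)=J_N^2\,\mathbf{M}_N(\p,\q)\,J_N^2$ and prove that by induction on $N$ using the block recursion of Lemma~\ref{lem: construction matrix}. The only minor difference is in the final step: the paper conjugates the Press--Dyson determinant formula directly by $J_N^2$ (using $|J_N^2|=\pm1$ and $J_N^2\mathbf{1}_N=\mathbf{1}_N$), whereas you pass through the stationary distribution to obtain $\nu(\q,\p)=J_N^2\nu(\p,\q)$ and then invoke Lemma~\ref{st: payoff function} with the transformed payoff vector --- both are short and equivalent.
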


It immediately follows from the linearity of the determinant that we only need to prove the statement for one of the parts. We choose $A_s(\p,\q)$. 
In order to proceed, we require two following lemmata. 
\begin{lemma}
To prove Theorem \ref{st: decomposition memory N}, it suffices to show that $J^2_N\cdot \mathbf{M}_N(\p,\q)\cdot J^2_N = \mathbf{M}_N(\q,\p)$.
\end{lemma}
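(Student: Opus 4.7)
The plan is to reduce the decomposition statement to a single matrix identity. Granted the hypothesis $J^2 M_N(\p,\q) J^2 = M_N(\q,\p)$, the conclusion should follow by conjugating the formula in Lemma \ref{st: payoff function} and then exploiting multilinearity of the determinant in its last column. The only subtle point is that the determinants in Lemma \ref{st: payoff function} do not use the full transition matrix --- the last column is replaced --- so I must keep track of what this conjugation does to the replaced column.

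First, I would record some trivial but essential properties of $J^2_N$. By the recursive definition \eqref{eq: recursive J2}, an easy induction shows $(J^2_N)^2 = I$, so $J^2_N$ is an involutory permutation matrix, implementing the permutation $\sigma$ on index strings $i_1 i_2\ldots i_{2N}$ that swaps the labels within each pair $(i_{2k-1},i_{2k})$ (sending $CD\leftrightarrow DC$ and fixing $CC$ and $DD$). In particular, $\sigma$ fixes the first and last indices $CC\cdots C$ and $DD\cdots D$, and $J^2_N\mathbf{1}_N=\mathbf{1}_N$.

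Second, I would do the key computation. For a vector $\mathbf{v}\in\mathbb{R}^{2^{2N}}$ write $D(\p,\q,\mathbf{v})$ for the $2^{2N}\times 2^{2N}$ matrix obtained from $M_N(\p,\q)-I$ by replacing its last column with $\mathbf{v}$; Lemma \ref{st: payoff function} then reads $A(\p,\q)=|D(\p,\q,\mathbf{f}_N)|/|D(\p,\q,\mathbf{1}_N)|$. Left-multiplying $D(\q,\p,\mathbf{v})$ by $J^2$ permutes rows by $\sigma$, so
\begin{equation*}
|D(\q,\p,\mathbf{v})| \;=\; \mathrm{sgn}(\sigma)\,|J^2 D(\q,\p,\mathbf{v})|.
\end{equation*}
Substituting $M_N(\q,\p)-I = J^2(M_N(\p,\q)-I)J^2$ into the first $n-1$ columns and using $(J^2)^2=I$, those columns become the first $n-1$ columns of $M_N(\p,\q)-I$ permuted among themselves by $\sigma|_{\{1,\ldots,n-1\}}$ (this makes sense precisely because $\sigma(n)=n$), while the last column becomes $J^2\mathbf{v}$. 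Since removing the fixed point $n$ does not change the sign, $\mathrm{sgn}(\sigma|_{\{1,\ldots,n-1\}})=\mathrm{sgn}(\sigma)$, and the two signs cancel to yield
\begin{equation*}
|D(\q,\p,\mathbf{v})| \;=\; |D(\p,\q,J^2\mathbf{v})|.
\end{equation*}

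Third, I would finish by evaluating this identity at $\mathbf{v}=\mathbf{f}_N$ and $\mathbf{v}=\mathbf{1}_N$. Since $J^2\mathbf{1}_N=\mathbf{1}_N$, the denominator $|D(\q,\p,\mathbf{1}_N)|$ equals $|D(\p,\q,\mathbf{1}_N)|$, and the numerator becomes $|D(\p,\q,J^2\mathbf{f}_N)|$, giving
\begin{equation*}
A(\q,\p)\;=\;\frac{|D(\p,\q,J^2\mathbf{f}_N)|}{|D(\p,\q,\mathbf{1}_N)|}.
\end{equation*}
Multilinearity of the determinant in the last column then yields
\begin{equation*}
A(\p,\q)\pm A(\q,\p)\;=\;\frac{|\,\widetilde{\mathbf{M}}_N(\p,\q)\;|\;\mathbf{f}_N\pm J^2_N\mathbf{f}_N\,|}{|\widetilde{\mathbf{M}}_N(\p,\q)\;|\;\mathbf{1}_N|},
\end{equation*}
which, after dividing by $2$, are the two claimed formulas; the symmetry and antisymmetry of the resulting expressions in $(\p,\q)$ are automatic from their construction as $\tfrac12(A(\p,\q)\pm A(\q,\p))$. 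The only place requiring a moment of care is the sign argument in the middle step, which is exactly where the fact that $\sigma$ fixes both endpoints of the index set (and hence that the last column ``slot'' is preserved under the conjugation) enters.
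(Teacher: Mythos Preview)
Your argument is correct and follows essentially the same route as the paper: reduce to the conjugation identity, observe that it passes to $M_N-I$, track what happens to the replaced last column, and finish by multilinearity of the determinant together with $J^2_N\mathbf{1}_N=\mathbf{1}_N$. The only cosmetic difference is that the paper multiplies by $J^2_N$ on \emph{both} sides (so the sign disappears as $|J^2_N|^2=1$ and the column permutation is undone by the right factor), whereas you multiply only on the left and cancel the two signs by noting that $\sigma$ fixes the last index; both executions are valid and yield the same key identity $|D(\q,\p,\mathbf{v})|=|D(\p,\q,J^2_N\mathbf{v})|$.
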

\begin{proof}
  Assuming the statement above holds, i.e. 
\begin{equation}
\label{eq: exchange memory N}
    \mathbf{M}_N(\q,\p)  =J_N^2 \mathbf{M}_N(\p,\q) J_N^2,
    \end{equation}
  enables us to deduce that  
\[
\mathbf{M}_N(\q,\p) -I = J_N^2 \mathbf{M}_N(\p,\q) J_N^2 - I =  J_N^2 \left(\mathbf{M}_N(\p,\q) -I\right)J_N^2, \] 
since $(J_N^2)^2 = I$. 

If we exchange the last column of $M(\p,\q)$ for $\mathbf{f}_N$, then after conjugating by $J_N^2$ the last column will turn into $J_N^2\mathbf{f}_N$ (due to the form of $J_N^2$). Therefore, to obtain the statement of the theorem from (\ref{eq: exchange memory N}), we only need to append the appropriately modified last column to our matrix.

\end{proof}

\begin{lemma}
    $J_N^2 \cdot\mathbf{M}_N(\mathbf{p},\mathbf{q}) \cdot  J_N^2  =  \mathbf{M}_N(\mathbf{q},\mathbf{p})$.
\end{lemma}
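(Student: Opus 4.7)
The plan is to recognise $J_N^2$ as the permutation matrix implementing the involution $i \mapsto \bar{i}$ on the set of $2^{2N}$ states, where $\bar{i}$ is obtained from $i$ by swapping every $CD$ pair with $DC$ while leaving every $CC$ or $DD$ pair fixed. Granting this, since $J_N^2$ is then a permutation matrix acting on basis vectors by $\mathbf{e}_i \mapsto \mathbf{e}_{\bar{i}}$, conjugation amounts to relabelling: $(J_N^2 \cdot M \cdot J_N^2)_{i,j} = M_{\bar{i}, \bar{j}}$ for any $2^{2N} \times 2^{2N}$ matrix $M$. The lemma then reduces to the entry-wise identity $\mathbf{M}_N(\p,\q)_{\bar{i}, \bar{j}} = \mathbf{M}_N(\q,\p)_{i,j}$.

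First I would establish the permutation interpretation of $J_N^2$ by induction on $N$. The base case $N = 1$ is immediate: $J_1^2$ merely swaps the second and third basis vectors, matching $\overline{CC} = CC$, $\overline{CD} = DC$, $\overline{DC} = CD$, $\overline{DD} = DD$. For the inductive step, the recursive formula (\ref{eq: recursive J2}) writes $J_N^2$ as a $4 \times 4$ block matrix whose only non-zero blocks are copies of $J_{N-1}^2$ at block positions $(1,1)$, $(2,3)$, $(3,2)$, $(4,4)$. Hence on basis vectors, $J_N^2$ first permutes the leading pair of the state (fixing the prefixes $CC$ and $DD$, and swapping $CD \leftrightarrow DC$), and then applies $J_{N-1}^2$ to the remaining $2(N-1)$ indices, which by the inductive hypothesis is the bar-involution there. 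Since barring acts independently on each pair, this composition equals $i \mapsto \bar{i}$.

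For the entry-wise identity, I would use the explicit description of entries of $\mathbf{M}_N$ developed in Lemma \ref{lem: construction matrix} via (\ref{eq: p quadruples}): the entry $\mathbf{M}_N(\p,\q)_{i,j}$ is non-zero precisely when the last $N-1$ pairs of $i$ coincide with the first $N-1$ pairs of $j$, in which case it equals $p_i q_{\bar{i}}$, $p_i(1-q_{\bar{i}})$, $(1-p_i) q_{\bar{i}}$ or $(1-p_i)(1-q_{\bar{i}})$, depending on whether the last pair of $j$ is $CC$, $CD$, $DC$ or $DD$ respectively. Because barring acts pairwise, the support condition for $\mathbf{M}_N(\p,\q)_{\bar{i}, \bar{j}}$ is equivalent to the support condition for $\mathbf{M}_N(\q,\p)_{i,j}$. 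A short case analysis across the four possible last pairs of $j$, together with $\bar{\bar{i}} = i$ and the observation that the last pair of $\bar{j}$ is the bar of the last pair of $j$, then shows that the non-zero values agree.

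The only delicate point is to track carefully how the barring of state indices interacts with exchanging the roles of the focal and co-player when $\p$ and $\q$ are swapped; the interchange of focal and co-player within a given pair $(a,b)$ is precisely what barring of that pair implements, which is exactly what makes the identity go through. Once the permutation interpretation of $J_N^2$ is established, this bookkeeping is the only ingredient required. A more computational alternative would be a direct induction on the block decompositions of $\mathbf{M}_N$ and $J_N^2$ as in Lemma \ref{lem: construction matrix} and (\ref{eq: recursive J2}), but the permutation-and-case-analysis route above seems both shorter and more illuminating.
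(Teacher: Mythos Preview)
Your argument is correct and takes a genuinely different route from the paper. The paper proves the lemma by a direct block-matrix induction: it writes $\mathbf{M}_N$ in terms of the $\overline{M}_i^j$ blocks from Lemma~\ref{lem: construction matrix}, writes $J_N^2$ in its recursive $4\times4$ block form, and then tracks how conjugation by $J_N^2$ shuffles and transforms these blocks, invoking the inductive hypothesis on each $J_{N-1}^2 \overline{M}_i^j J_{N-1}^2$. Your approach instead first isolates the combinatorial content---that $J_N^2$ is exactly the permutation matrix for the involution $i\mapsto\bar{i}$---and then verifies the entry-wise identity $\mathbf{M}_N(\p,\q)_{\bar{i},\bar{j}}=\mathbf{M}_N(\q,\p)_{i,j}$ directly from the quadruple description~(\ref{eq: p quadruples}). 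This buys you a cleaner separation of concerns: the induction is pushed entirely into the (easier) identification of $J_N^2$ as a permutation, after which the main statement becomes a four-case bookkeeping check with no further recursion. The paper's route, by contrast, keeps everything at the level of block matrices and never names the permutation explicitly; it is more self-contained relative to the machinery already built, but somewhat obscures \emph{why} the identity holds. You even anticipate the paper's approach in your closing remark about the ``more computational alternative.''
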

\begin{proof}
A direct computation shows that the statement is true for memory-1. Assume that it holds for $N-1$; as per rules of induction, we set off to demonstrate it for memory $N$. 
By Lemma \ref{lem: construction matrix}, we know how to obtain the memory-$N$ transition matrix from the case of $N-1$. For convenience, we introduce some further notation. 
Consider the four matrices of the form
\begin{equation}
\label{eq: four matrices}
\begin{pmatrix}
    M_i^j\\
    0\\
    0\\
    0\\
\end{pmatrix}\ ,  \ \begin{pmatrix}
   0\\
     M_i^j\\
    0\\
    0\\
\end{pmatrix}\ , \ \begin{pmatrix}
   0\\
     0\\
    M_i^j\\
    0\\
\end{pmatrix}\ , \ \begin{pmatrix}
   0\\
     0\\
    0\\
    M_i^j\\
\end{pmatrix},
\end{equation}
that comprise the matrix $\mathbf{M}_N$. By construction, all matrices in (\ref{eq: four matrices}) are square and have the same dimension as $\mathbf{M}_{N-1}(\p,\q)$.

We will hence denote matrices from (\ref{eq: four matrices}) by $\overline{M}_i^j$, irrespectively of their dimension, since for our purposes it can be underestood to be $2^{2(N-1)}\times 2^{2(N-1)}$. Then  $\mathbf{M}_N$ can be  written as

\[
\mathbf{M}_N =\begin{pmatrix}
    \M_1'&\M_2'&\M_3'&\M_4'\\
    \M_1''&\M_2''&\M_3''&\M_4''\\
    \M_1'''&\M_2'''&\M_3'''&\M_4'''\\
    \M_1''''&\M_2''''&\M_3''''&\M_4''''\\
\end{pmatrix}.
\]
From the recursive construction of matrix $J_N^2$, see Eq.~\eqref{eq: recursive J2},  $J_N^2 \mathbf{M}
(\p,\q)J_N^2$ will have the form
\begin{small}
\begin{equation}
\label{eq: long matrix}
\begin{pmatrix}
   J_{N-1}^2 \M_1'J_{N-1}^2&J_{N-1}^2\M_3'J_{N-1}^2&J_{N-1}^2\M_2'J_{N-1}^2&J_{N-1}^2\M_4'J_{N-1}^2\\
    
   J_{N-1}^2 \M_1'''J_{N-1}^2&J_{N-1}^2\M_3'''J_{N-1}^2&J_{N-1}^2\M_2'''J_{N-1}^2&J_{N-1}^2\M_4'''J_{N-1}^2\\
J_{N-1}^2\M_1''J_{N-1}^2&J_{N-1}^2\M_3''J_{N-1}^2&J_{N-1}^2\M_2''J_{N-1}^2&J_{N-1}^2\M_4''J_{N-1}^2\\
J_{N-1}^2\M_1''''J_{N-1}^2&J_{N-1}^2\M_3''''J_{N-1}^2&J_{N-1}^2\M_2''''J_{N-1}^2&J_{N-1}^2\M_4''''J_{N-1}^2\\ 
\end{pmatrix}
\end{equation}
\end{small}
Note how the two middle 'columns' and 'rows' in (\ref{eq: long matrix}) exchanged places, as compared to the original form of $\mathbf{M}_N(\p,\q)$.
From our induction assumption and the recursive construction in Lemma~\ref{lem: construction matrix}, we conclude the following:
\[
J_{N-1}^2 \M_1^{i}(\p,\q)J_{N-1}^2 = J_{N-1}^2\begin{pmatrix} M_1^{i}(\p,\q)\\0\\0\\0\end{pmatrix}J_{N-1}^2 = \begin{pmatrix} M_1^{i}(\q,\p)\\0\\0\\0\end{pmatrix},
\]
since $J_{N-1}^2$ exchanges $\p$ and $\q$ for memory $N-1$ games. Analogously, 
\[
J_{N-1}^2 \M_4^{i}(\p,\q)J_{N-1}^2 = J_{N-1}^2\begin{pmatrix} 0\\0\\0\\M_4^{i}(\p,\q)\end{pmatrix}J_{N-1}^2 = \begin{pmatrix} 0\\0\\0\\M_4^{i}(\q,\p)\end{pmatrix}.
\]
Things are slightly more complicated for $\M_2^i$ and $\M_3^i$. But again, we observe that 
\[J_{N-1}^2 \M_2^{i}(\p,\q)J_{N-1}^2 = J_{N-1}^2\begin{pmatrix} 0\\M_2^{i}(\p,\q)\\0\\0\end{pmatrix}J_{N-1}^2 = \begin{pmatrix} 0\\0\\M_3^{i}(\q,\p)\\0\end{pmatrix}
\]
and vice versa, by the induction assumption (this follows from the two facts: firstly,  $J_{N-1}^2$ only  exchanges rows and columns; secondly, the indices of elements in $M_3$ in $M(\q,\p)$ are $q_{DC\ldots}p_{CD\ldots}$ which in matrix $M(\p,\q)$ lie in $M_2$).

This fully describes the action  of $J_{N-1}^2$ on each individual 'row' in (\ref{eq: long matrix}). However, the first two entries in the indices of $\p$ and $\q$ are opposite in the second and the third rows of $\mathbf{M}_N$. Therefore, in order to complete our exchange, we need to exchange these two 'rows'. The matrix in  (\ref{eq: long matrix}) satisfies that condition, which concludes our proof.  
\end{proof}

\noindent
Armed with this result and taking into consideration that $\vert J_N^2\vert=\pm1$ and $J^2_N\mathbf{1}_N = \mathbf{1}_N$, we can demonstrate that Theorem \ref{st: decomposition memory N} holds.
\begin{proof}[Proof of Theorem \ref{st: decomposition memory N}]
\begin{small}
\begin{equation}
    \begin{split}
\frac12(A(\p,\q) + A(\q,\p)) &=\frac12\left( \frac{\begin{vmatrix}\widetilde{\mathbf{M}}_N(\mathbf{p},\mathbf{q}) &
\mathbf{f}_N\end{vmatrix}}{\begin{vmatrix}\widetilde{\mathbf{M}}_N(\mathbf{p},\mathbf{q}) & \mathbf{1}_N\end{vmatrix}} + \frac{\begin{vmatrix}\widetilde{\mathbf{M}}_N(\mathbf{q},\mathbf{p}) &
\mathbf{f}_N\end{vmatrix}}{\begin{vmatrix}\widetilde{\mathbf{M}}_N(\mathbf{q},\mathbf{p}) & \mathbf{1}_N\end{vmatrix}}\right)\\
&=\frac12\left( \frac{\begin{vmatrix}\widetilde{\mathbf{M}}_N(\mathbf{p},\mathbf{q}) &
\mathbf{f}_N\end{vmatrix}}{\begin{vmatrix}\widetilde{\mathbf{M}}_N(\mathbf{p},\mathbf{q}) & \mathbf{1}_N\end{vmatrix}} + \frac{\vert J_N^2\vert\begin{vmatrix}\widetilde{\mathbf{M}}_N(\mathbf{q},\mathbf{p}) &
\mathbf{f}_N\end{vmatrix}\vert J_N^2\vert}{\vert J_N^2\vert\begin{vmatrix}\widetilde{\mathbf{M}}_N(\mathbf{q},\mathbf{p}) & \mathbf{1}_N\end{vmatrix}\vert J_N^2\vert}\right)\\
&=\frac12\left( \frac{\begin{vmatrix}\widetilde{\mathbf{M}}_N(\mathbf{p},\mathbf{q}) &
\mathbf{f}_N\end{vmatrix}}{\begin{vmatrix}\widetilde{\mathbf{M}}_N(\mathbf{p},\mathbf{q}) & \mathbf{1}_N\end{vmatrix}} + \frac{\left| J_N^2.\begin{pmatrix}\widetilde{\mathbf{M}}_N(\mathbf{q},\mathbf{p}) &
\mathbf{f}_N\end{pmatrix}. J_N^2\right|}{\left|J_N^2.\begin{pmatrix}\widetilde{\mathbf{M}}_N(\mathbf{q},\mathbf{p}) & \mathbf{1}_N\end{pmatrix}.J_N^2\right|}\right)\\
&=\frac12\left( \frac{\begin{vmatrix}\widetilde{\mathbf{M}}_N(\mathbf{p},\mathbf{q}) &
\mathbf{f}_N\end{vmatrix}}{\begin{vmatrix}\widetilde{\mathbf{M}}_N(\mathbf{p},\mathbf{q}) & \mathbf{1}_N\end{vmatrix}} + \frac{\begin{vmatrix}\widetilde{\mathbf{M}}_N(\mathbf{p},\mathbf{q}) &
J_N^2\mathbf{f}_N\end{vmatrix}}{\begin{vmatrix}\widetilde{\mathbf{M}}_N(\mathbf{p},\mathbf{q}) & \mathbf{1}_N\end{vmatrix}}\right)\\
&=\frac12\frac{\begin{vmatrix}\widetilde{\mathbf{M}}_N(\mathbf{p},\mathbf{q}) &
\mathbf{f}_N +J_N^2\mathbf{f}_N  \end{vmatrix}}{\begin{vmatrix}\widetilde{\mathbf{M}}_N(\mathbf{p},\mathbf{q}) & \mathbf{1}_N\end{vmatrix}}.
\end{split}
\end{equation}
\end{small}
The last two equalities stem from the linearity of the determinant.
This completes the proof of Theorem \ref{st: decomposition memory N}. 
\end{proof}

By construction, both $A_a(\p,\q)$ and $A_s(\p,\q)$ are payoff functions of appropriately defined games. These games have the same transition matrix $\mathbf{M}_N(\p,\q)$ as our original game, but with payoff vectors $\mathbf{f}_N - J^2_N\mathbf{f}_N$ and  $\mathbf{f}_N + J^2_N\mathbf{f}_N$, respectively.
$A_a(\p,\q)$ captures the difference in expected payoffs between the two strategies, whereas $A_s(\p,\q)$ returns the average payoff per player. Thus, adaptive dynamics for $A_a(\p,\q) $ can be expected to maximise the profit gap, whereas adaptive dynamics for $A_s(\p,\q)$ maximises the players' average payoffs.

\subsection{General symmetries }
Interestingly, both transformations described in Sections \ref{sec: exchanging c and d} and \ref{sec:exchanging p and q} lead to multiplications of the type $\mathbf{M}_N\mapsto O\mathbf{M}_NO^T$ for some orthogonal matrix $O$. 
This approach can be generalised to multiplying all the attributes of the game by an orthogonal matrix:
\[
\nu\mapsto O\nu, \ \mathbf{M}_N\mapsto O\mathbf{M}_NO^T, \mathbf{f}_N\mapsto O\mathbf{f}_N.
\]

\begin{lemma}
\label{st: lemma classification}
  The transformation
  \[
\nu\mapsto O\nu, \ \mathbf{M}_N\mapsto O\mathbf{M}_NO^T, \mathbf{f}\mapsto O\mathbf{f},
\]
with an orthogonal matrix $O$ leaves the payoff function invariant.
\end{lemma}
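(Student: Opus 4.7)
The plan is to unpack the two defining properties of the payoff function $A(\p,\q) = \langle \nu(\p,\q), \mathbf{f}_N\rangle$ and check that each one is preserved under the orthogonal conjugation. Recall that $\nu$ is characterised by (i) $\nu^T \mathbf{M}_N = \nu^T$ and (ii) $\sum_i \nu_i = 1$, so the invariant distribution is genuinely determined by $\mathbf{M}_N$ up to normalisation. Orthogonality of $O$ will enter twice: first through $O^T O = I$, which is what makes the eigenvector equation covariant, and second through preservation of the standard inner product, which turns out to be the only property of $\langle \cdot,\cdot\rangle$ actually used in the formula for $A$.

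First I would verify that if $\nu$ is a left eigenvector of $\mathbf{M}_N$ with eigenvalue $1$, then $O\nu$ is a left eigenvector of $O \mathbf{M}_N O^T$ with eigenvalue $1$. This is the direct computation
\begin{equation*}
(O\nu)^T \bigl(O \mathbf{M}_N O^T\bigr) = \nu^T O^T O \mathbf{M}_N O^T = \nu^T \mathbf{M}_N O^T = \nu^T O^T = (O\nu)^T,
\end{equation*}
using $O^T O = I$ and the invariance of $\nu$ under $\mathbf{M}_N$. Hence the transformed invariant vector attached to the transformed transition matrix is exactly $O\nu$ (up to normalisation, which I address in the next step).

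Second, I would plug everything into the payoff formula and use the orthogonality of $O$ as an isometry:
\begin{equation*}
\bigl\langle O\nu, \, O\mathbf{f}_N \bigr\rangle = (O\nu)^T (O\mathbf{f}_N) = \nu^T O^T O \mathbf{f}_N = \nu^T \mathbf{f}_N = \langle \nu, \mathbf{f}_N\rangle = A(\p,\q),
\end{equation*}
which is precisely the invariance statement.

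The one subtlety, and what I expect to be the only real point worth commenting on, is the normalisation condition $\sum_i \nu_i = 1$ that makes $\nu$ a probability distribution. A generic orthogonal $O$ need not preserve the all-ones vector, so $\sum_i (O\nu)_i$ need not equal $1$, meaning $O\nu$ may only be the invariant vector of $O\mathbf{M}_N O^T$ up to a scalar $\lambda$. However, the same scalar then multiplies $\langle O\nu, O\mathbf{f}_N\rangle$, and since the normalised invariant vector is obtained by dividing by $\lambda$, the final payoff $A$ is unchanged. The two particular cases of interest, $O = J_N^8$ and $O = J_N^2$, are permutation matrices, so they do preserve $\mathbf{1}_N$ and the issue does not even arise; this is consistent with how the lemma is used in the preceding sections.
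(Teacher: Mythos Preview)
Your proof is correct and follows essentially the same approach as the paper: verify via $O^T O = I$ that $O\nu$ remains a left unit eigenvector of $O\mathbf{M}_N O^T$, then use orthogonality again to conclude $\langle O\nu, O\mathbf{f}_N\rangle = \langle \nu, \mathbf{f}_N\rangle$. The paper's version is much terser and does not comment on the normalisation issue you raise; your extra paragraph on that point is a genuine clarification but not a different method.
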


\begin{proof}
    The proof is pure computation. Since $\nu\mapsto O\nu$, $\nu^T\mapsto\nu^TO^T$  and 
    \[
    \nu^TO^TO\mathbf{M}_NO^T = \nu^TO^T =\nu O^T.
    \]
    Since we multiply $\mathbf{f}$ by the same matrix, the payoff function stays invariant. 
\end{proof}

\noindent
The above lemma naturally raises the question: which orthogonal matrices $O$ make the matrix $O\mathbf{M}_NO^T$ a memory-$N$ game transition matrix? To this end, we look for an orthogonal matrix $O$ that satisfies the two following conditions:
\begin{enumerate}
    \item The matrix $O\mathbf{M}_NO^T$ has the same four-diagonal structure as in Theorem \ref{lem: construction matrix}. 
    \item There must exist a change of variables that brings quadruples in every row of $O\mathbf{M}_NO^T$ to the form
    \begin{equation}
    \label{eq: quadruples}
    yz \ \ y(1-z)\ \ (1-y)z \ \ (1-y)(1-z)
    \end{equation}
    for some $y$ and $z$.
\end{enumerate}
These conditions might seem lax at first glance, but in fact they dramatically decrease the number of matrices $O$ that satisfy them. We call orthogonal matrices that satisfy (1) and (2)  \textit{admissible}. 

We can immediately narrow down the class of admissible matrices. 

\begin{lemma}
    Any admissible orthogonal matrix $O$ is a permutation matrix. 
\end{lemma}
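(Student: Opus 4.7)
The plan is to combine three structural features of memory-$N$ transition matrices---stochasticity, a fixed support pattern, and a rank-one quadruple structure---to progressively narrow down the admissible orthogonal matrices, and ultimately conclude that $O$ must be a permutation. First, I would use stochasticity. Since $\mathbf{M}_N \mathbf{1} = \mathbf{1}$ identically in $(\p,\q)$, and since condition (2) of admissibility implies that $O\mathbf{M}_N O^T$ also has row sums equal to $1$, one obtains $\mathbf{M}_N(O^T \mathbf{1}) = O^T \mathbf{1}$ for every admissible strategy pair. For generic $\p,\q \in (0,1)^{2^{2N}}$ the chain is irreducible and its eigenspace for the eigenvalue $1$ is spanned by $\mathbf{1}$; orthogonality of $O$ then forces $O^T \mathbf{1} = \pm\mathbf{1}$, and correspondingly $O\mathbf{1} = \pm\mathbf{1}$.

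Second, I would exploit the support pattern $S$ described in Lemma~\ref{lem: construction matrix}. Choosing all $p_i, q_i \in (0,1)$ makes every entry of $\mathbf{M}_N$ at a position of $S$ a strictly positive, non-constant polynomial in the $p_i$ and $q_j$, and makes the chain irreducible. Since condition (1) demands that $O\mathbf{M}_N O^T$ vanish off $S$ identically in $(\p,\q)$, each of these vanishings is a polynomial identity, giving strong linear and multilinear constraints on the entries of $O$. Together with the quadruple form in condition (2), which requires each row of $O\mathbf{M}_N O^T$ to factor as a rank-one tensor $(y_i, 1-y_i)\otimes(z_i, 1-z_i)$, these constraints should essentially allow $O$ only to relabel index blocks while respecting the four-diagonal structure.

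Third, I would invoke non-negativity: the quadruple entries of $O\mathbf{M}_N O^T$ must lie in $[0,1]$ for all valid strategies. Combined with the rigidity from the previous step, this should force the matrix $O$ itself to have non-negative entries, after which the classical fact that a real orthogonal matrix with non-negative entries is a permutation matrix closes the argument. The main obstacle I anticipate is the rigorous derivation of non-negativity of $O$ from non-negativity of $O\mathbf{M}_N O^T$: for every hypothetical negative entry $O_{ab} < 0$ one must construct a valid choice of $(\p,\q)$ such that some entry of $O\mathbf{M}_N O^T$ inherits this negative contribution without cancellation by the other terms, which amounts to a careful combinatorial analysis of the four-diagonal structure and of which positions in $S$ can be simultaneously activated by a single choice of strategies.
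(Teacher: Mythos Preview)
Your first step is exactly the paper's argument: stochasticity of both $\mathbf{M}_N$ and $O\mathbf{M}_N O^T$ forces $O^T\mathbf{1}$ into the one-dimensional right $1$-eigenspace of $\mathbf{M}_N$, whence $O^T\mathbf{1}=\pm\mathbf{1}$. The paper then applies the identical reasoning to $O^T$ (via $O^T\mathbf{M}_N' O=\mathbf{M}_N$) to obtain $O\mathbf{1}=\mathbf{1}$ as well, declares $O$ doubly stochastic, and invokes the standard fact that an orthogonal matrix with non-negative entries is a permutation. That is the entire proof in the paper; it does not touch the support pattern for this lemma.

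The difficulty you flag in step~3 is real, and it is the same point the paper glosses over: row and column sums equal to $1$ together with orthogonality do \emph{not} by themselves force non-negativity (the $3\times3$ circulant with first row $\tfrac13(2,2,-1)$ is orthogonal with unit row and column sums but has negative entries). Your steps~2--3 are an attempt to supply that missing argument, but since you have not carried out step~3, your proposal remains a plan rather than a proof---incomplete at exactly the spot where the paper's own argument is. Two practical remarks: your step~2 (support-pattern polynomial identities) is likely a detour here, as the paper uses that structure only for the subsequent classification of the eight admissible matrices; and your careful $\pm\mathbf{1}$ is another facet of the same non-negativity issue (if $O^T\mathbf{1}=-\mathbf{1}$ then every column of $O$ already contains a negative entry), so resolving step~3 would dispose of the sign ambiguity as well.
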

\begin{proof}
    Suppose $O$ is admissible. Therefore, 
    \begin{equation}
        \label{eq: admissible}
        O\mathbf{M}_NO^T = \mathbf{M}_N'
    \end{equation}
    for a matrix $\mathbf{M}'$ that is also a transition matrix of some memory-$N$ game. Both $\mathbf{M}$ and $\mathbf{M}'$ are right stochastic and 
    \[
    \mathbf{1} = \mathbf{M}'\mathbf{1} = O\mathbf{M}O^T\mathbf{1}.
    \]
    Then 
    \[
    \mathbf{M}O^T\mathbf{1} = O^T\mathbf{1}.
    \]
    When all entries of $\p$ and $\q$ lie strictly within the $(0,1)^{2^{2N}}$ cube, the vector $\mathbf{1}$ is the only right eigenvector corresponding to the eigenvalue 1 \cite{norris1998markov}. Therefore, $O^T\mathbf{1}  = \mathbf{1}$ and $O$ is a left stochastic matrix. 

    On the other hand, everything that we have just concluded about $O$, holds for $O^T$, since we can rewrite the relation $O\mathbf{M}O^T =\mathbf{M}'$ as $O^T\mathbf{M}'O = \mathbf{M}$. Therefore, $O$ has to be a right stochastic matrix; all entries of $O$ are non-negative numbers. But orthogonal matrices that have only non-negative entries are permutations. 
\end{proof}
\begin{lemma}
\label{st: structure conservation memoty 1}
For memory-1 games, the only admissible matrices are
    \begin{equation}
        \label{eq: good J for memory 1}
        \begin{split}
 &  J^1 := \left(
\begin{array}{cccc}
 1 & 0 & 0 & 0 \\
 0 & 1 & 0 & 0 \\
 0 & 0 & 1 & 0 \\
 0 & 0 & 0 & 1 \\
\end{array}
\right), \ J^2:=\left(
\begin{array}{cccc}
 1 & 0 & 0 & 0 \\
 0 & 0 & 1 & 0 \\
 0 & 1 & 0 & 0 \\
 0 & 0 & 0 & 1 \\
\end{array}
\right), \ J^3:=\left(
\begin{array}{cccc}
 0 & 1 & 0 & 0 \\
 1 & 0 & 0 & 0 \\
 0 & 0 & 0 & 1 \\
 0 & 0 & 1 & 0 \\
\end{array}
\right), \\& J^4:=\left(
\begin{array}{cccc}
 0 & 1 & 0 & 0 \\
 0 & 0 & 0 & 1 \\
 1 & 0 & 0 & 0 \\
 0 & 0 & 1 & 0 \\
\end{array}
\right),\ J^5:=\left(
\begin{array}{cccc}
 0 & 0 & 1 & 0 \\
 1 & 0 & 0 & 0 \\
 0 & 0 & 0 & 1 \\
 0 & 1 & 0 & 0 \\
\end{array}
\right), \ J^6:=\left(
\begin{array}{cccc}
 0 & 0 & 1 & 0 \\
 0 & 0 & 0 & 1 \\
 1 & 0 & 0 & 0 \\
 0 & 1 & 0 & 0 \\
\end{array}
\right), \\& J^7:=\left(
\begin{array}{cccc}
 0 & 0 & 0 & 1 \\
 0 & 1 & 0 & 0 \\
 0 & 0 & 1 & 0 \\
 1 & 0 & 0 & 0 \\
\end{array}
\right), \ J^8:=
\left(
\begin{array}{cccc}
 0 & 0 & 0 & 1 \\
 0 & 0 & 1 & 0 \\
 0 & 1 & 0 & 0 \\
 1 & 0 & 0 & 0 \\
\end{array}
\right).
        \end{split}
    \end{equation}
\end{lemma}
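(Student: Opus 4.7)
My plan is to exploit the fact that the quadruple $(yz,\ y(1-z),\ (1-y)z,\ (1-y)(1-z))$ is characterised, as a probability vector, by the single algebraic identity $a_1 a_4 = a_2 a_3$. By the preceding lemma, any admissible $O$ is a $4 \times 4$ permutation matrix, so only the $24$ permutations $\sigma$ of $\{1,2,3,4\}$ need to be examined. The conjugation $\mathbf{M}_1 \mapsto O \mathbf{M}_1 O^T$ acts on each row of $\mathbf{M}_1$ by permuting its four entries according to $\sigma$, so the rank-$1$ factorisation required by condition (2) is preserved precisely when $\sigma$ preserves the identity $a_1 a_4 = a_2 a_3$ for every row---equivalently, for the generic first row.

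For the rigidity step, I would apply this to row~$1$ of $\mathbf{M}_1$, whose entries are the polynomials $a_1 = p_{CC} q_{CC}$, $a_2 = p_{CC}(1 - q_{CC})$, $a_3 = (1 - p_{CC}) q_{CC}$ and $a_4 = (1 - p_{CC})(1 - q_{CC})$ in the algebraically independent indeterminates $p_{CC}, q_{CC}$. A short computation of the six possible pairwise products $a_i a_j$ shows that only the partition $\bigl\{\{1,4\},\ \{2,3\}\bigr\}$ yields a matching pair of products, so $\sigma$ must stabilise this partition of $\{1,2,3,4\}$. A straightforward enumeration then shows that the stabiliser in $S_4$ has order $8$, consisting of the four block-preserving elements $\mathrm{id}$, $(1\,4)$, $(2\,3)$, $(1\,4)(2\,3)$ and the four block-swapping elements $(1\,2)(3\,4)$, $(1\,3)(2\,4)$, $(1\,2\,4\,3)$, $(1\,3\,4\,2)$. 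A direct inspection identifies the corresponding eight permutation matrices with $J^1, \dots, J^8$ from the statement.

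To close the argument, I would verify sufficiency: for each of these eight permutations, I exhibit explicit new strategies $\p', \q'$ such that $O_\sigma \mathbf{M}_1(\p, \q) O_\sigma^T = \mathbf{M}_1(\p', \q')$. The block-preserving permutations correspond to the relabellings $C \leftrightarrow D$ on the focal and/or opponent player's strategy, while the block-swapping permutations additionally exchange the roles of $\p$ and $\q$. Condition (1) from the definition of admissibility is automatic for $N = 1$ since $\mathbf{M}_1$ has no prescribed zero entries. The hard part will be the rigidity step: one must treat $p_{CC}$ and $q_{CC}$ as algebraically independent and carefully check that, among the six pairwise products, only the pair indexed by $\{1,4\}$ and $\{2,3\}$ coincide as polynomials. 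Once this polynomial identity is in place, the remainder of the argument is a short enumeration.
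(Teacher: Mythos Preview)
Your proposal is correct and is essentially the computation-and-coordinate-change argument the paper alludes to, just carried out in full. The paper's own proof is the single sentence ``The proof is a matter of computation and a coordinate change,'' so your rank-one characterisation $a_1 a_4 = a_2 a_3$ of the quadruples and the resulting identification of the admissible permutations with the order-$8$ stabiliser of the partition $\{\{1,4\},\{2,3\}\}$ in $S_4$ supplies exactly the structure the paper leaves implicit.
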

\begin{proof}
    The proof is a matter of computation and a coordinate change.     
\end{proof}

\begin{remark}
It can be easily checked that the matrices $J^1\ldots J^8$ form a subgroup of the group of permutation matrices. Moreover, the following relations can be checked:
\begin{enumerate}
    \item $J^5 = J^4\cdot J^8$,
    \item $J^6 = J^3\cdot J^8$,
    \item $J^ 7= J^2\cdot J^8$.
\end{enumerate}
\end{remark}

\noindent
Transformations through conjugation by the above matrices can be interpreted as `admissible' alternative points of view on the game. Clearly, $J^1$ is the identity transformation. 
Moreover, we have already seen that the matrices $J^8$ and $J^2$ interchange the two actions ($C$ and $D$) and the two players (players~1 and~2), respectively.
Conjugation by the matrix $J^3$ is equivalent to the following coordinate change::
\begin{equation*}
\begin{aligned}[c]
    &p_{CC}' = p_{CD}\\
    &p_{CD}' = p_{CC}\\
    &p_{DC}'= p_{DD}\\
    &p_{DD}' = p_{DC}    \end{aligned}\ \ \ \ \ \ \ \ \ \ \ \ \ 
    \begin{aligned}[c]
&q_{CC}'= 1-q_{DC}\\
&q_{CD}' = 1-q_{CC}\\
&q_{DC}'= 1-q_{DD}\\
&q_{DD}' = 1-q_{CD}.
    \end{aligned}
\end{equation*}
Here, we exchange $C$ abd $D$ for the second player.
The matrix $J^4$ induces 
\begin{equation*}
\begin{aligned}[c]
    &p_{CC}' = 1-q_{DC}\\
    &p_{CD}' = 1-q_{DD}\\
    &p_{DC}'= 1-q_{CC}\\
    &p_{DD}' = 1-q_{CD}   \end{aligned}\ \ \ \ \ \ \ \ \ \ \ \ \ 
    \begin{aligned}[c]
&q_{CC}'= p_{CD}\\
&q_{CD}' = p_{DD}\\
&q_{DC}'= p_{CC}\\
&q_{DD}' = p_{DC}.
    \end{aligned}
\end{equation*}
That is, both the player labels and the action labels are interchanged.

Naturally, we aim to generalise Lemma \ref{st: structure conservation memoty 1} to arbitrary values of $N$. To do, so we need to introduce new objects. 
Consider now the set of recursively constructed matrices $J^1_N\ldots, J^8_N$. The procedure is  akin to that in Eq.~(\ref{eq: recursive J2}):  at the $N$th step, replace zero entries of $J_{N-1}^i$ with  $2^{2(N-1)}\times 2^{2(N-1)}$ zero matrices and nonzero entries with copies of $J^i_{N-1}$.  For example, the matrix $J_2^3$ has the following form:
\[
J_2^3 =
\left(\begin{array}{cccc|cccc|cccc|cccc}
0&0&0&0&0&1&0&0&0&0&0&0&0&0&0&0\\
0&0&0&0&1&0&0&0&0&0&0&0&0&0&0&0\\
0&0&0&0&0&0&0&1&0&0&0&0&0&0&0&0\\
0&0&0&0&0&0&1&0&0&0&0&0&0&0&0&0\\
\hline
0&1&0&0&0&0&0&0&0&0&0&0&0&0&0&0\\
1&0&0&0&0&0&0&0&0&0&0&0&0&0&0&0\\
0&0&0&1&0&0&0&0&0&0&0&0&0&0&0&0\\
0&0&1&0&0&0&0&0&0&0&0&0&0&0&0&0\\
\hline
0&0&0&0&0&0&0&0&0&0&0&0&0&1&0&0\\
0&0&0&0&0&0&0&0&0&0&0&0&1&0&0&0\\
0&0&0&0&0&0&0&0&0&0&0&0&0&0&0&1\\
0&0&0&0&0&0&0&0&0&0&0&0&0&0&1&0\\
\hline
0&0&0&0&0&0&0&0&0&1&0&0&0&0&0&0\\
0&0&0&0&0&0&0&0&1&0&0&0&0&0&0&0\\
0&0&0&0&0&0&0&0&0&0&0&1&0&0&0&0\\
0&0&0&0&0&0&0&0&0&0&1&0&0&0&0&0\\

\end{array}\right)
\]

\noindent
For the set $\{J_N^1,\ldots,J_N^8\}$, the following holds:
\begin{theorem}
\label{st: symmetries for memory N}
The matrices $J_N^i$ are the only admissible permutation matrices.
\end{theorem}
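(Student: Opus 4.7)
The plan is to proceed by induction on $N$, with the base case $N=1$ handled by Lemma \ref{st: structure conservation memoty 1}. For the inductive step, I fix an admissible permutation $O$ for memory-$N$ and aim to show $O = J_N^i$ for some $i \in \{1,\ldots,8\}$, by exploiting the recursive block decomposition of $\mathbf{M}_N$ from Lemma \ref{lem: construction matrix} together with the matching recursive construction of the $J_N^i$.

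The first step is to analyse how $O$ acts on the outer $4\times 4$ decomposition of the state space induced by the oldest pair of indices $(i_1, i_2)$. Conjugation by $O$ realises a bijection $\sigma$ on states; admissibility condition~(1), i.e.\ preservation of the four-diagonal nonzero pattern of $\mathbf{M}_N$, forces $\sigma$ to descend to a well-defined permutation of the four outer index pairs. Condition~(2) at this block level then lets me invoke the base case Lemma~\ref{st: structure conservation memoty 1} to conclude that the outer permutation is one of the eight $J^i$ of memory~1. Restricting to each of the four nonzero outer blocks, what remains is an action on the remaining $2(N-1)$ indices. Using Lemma~\ref{lem: construction matrix}, the nonzero entries inside each such block inherit the memory-$(N-1)$ factorisation $p_i q_{\bar i},\ p_i(1-q_{\bar i}),\ (1-p_i)q_{\bar i},\ (1-p_i)(1-q_{\bar i})$, so admissibility at this level reduces to admissibility at memory level $N-1$. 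By the inductive hypothesis, each inner action must be some $J_{N-1}^{j}$.

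The main obstacle is to show that the outer index $i$ and the inner indices $j$ (which a priori could differ between blocks) must all coincide. The eight $J^i$ correspond to concrete involutions of an index pair -- identity, swapping $C \leftrightarrow D$ for one or both players, swapping the two players, and their compositions -- and the entries of $\mathbf{M}_N$ in a single row share common factors $p_i$ and $q_{\bar i}$ of the full state $i$. A mismatch between the outer and an inner relabelling would produce, in a single row of $O\mathbf{M}_N O^T$, a quadruple whose four nonzero entries mix the relabelled and the unrelabelled probabilities inconsistently and therefore cannot be written as $yz,\ y(1-z),\ (1-y)z,\ (1-y)(1-z)$ with a common $(y,z)$, violating condition~(2). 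I would verify this by tracking a single generic row through a hypothetical mismatched combination with outer action $J^i$ and inner action $J_{N-1}^{j}$ for $i \neq j$, and exhibiting an explicit failure of the quadruple factorisation. Once outer and inner actions are forced to share the same index, the recursive construction of $J_N^i$ immediately identifies $O$ with $J_N^i$, completing the induction.
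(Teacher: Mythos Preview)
Your inductive outline has the right skeleton, but it hides the hardest step behind an assertion. You write that ``admissibility condition~(1) forces $\sigma$ to descend to a well-defined permutation of the four outer index pairs,'' i.e.\ that $O$ respects the coarse $4\times 4$ block decomposition of $\mathbf{M}_N$ determined by the \emph{oldest} pair $(i_1,i_2)$. But the four-diagonal pattern in condition~(1) is a statement about the \emph{newest} pair: each row carries a quadruple of nonzero entries in four consecutive columns, and these quadruples drift as the row index increases. Nothing about condition~(1) on its own says that $\sigma$ must send the first $2^{2(N-1)}$ states to a single outer block. The paper proves exactly this in Lemma~\ref{st: O n consists of blocks}, and the argument is a bottom-up chain: since $\sigma$ cannot split a quadruple, $\sigma(1),\ldots,\sigma(4)$ are consecutive; these four row indices then force the corresponding column quadruples to stay together, so $\sigma(1),\ldots,\sigma(16)$ are consecutive; iterating gives consecutiveness of $\sigma(1),\ldots,\sigma(2^{2(N-1)})$. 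In other words, the outer block structure is deduced \emph{from} the innermost quadruple structure (Lemma~\ref{st: O consists of blocks}), not assumed at the outset. Your top-down order skips this, and without it the induction cannot start.

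A second, smaller gap: even once the outer structure is in place, the claim that ``admissibility at this level reduces to admissibility at memory level $N-1$'' is not immediate. The nonzero $2^{2(N-1)}\times 2^{2(N-1)}$ outer blocks of $\mathbf{M}_N$ are the matrices $\overline{M}_i^j$ of~\eqref{eq: four matrices}; each contains only a single $M_i^j$-strip and is \emph{not} a memory-$(N-1)$ transition matrix, so you cannot literally feed it to the inductive hypothesis. The paper instead observes (steps~4--6) that left multiplication by $J_{N-1}^j$ permutes the strips $\overline{M}_i^j$ exactly as $J_1^j$ permutes rows, and that compatibility of this strip permutation with the column permutation needed to restore the four-diagonal pattern forces the placement of the nonzero blocks of $O$ to mirror $J_1^j$ and forces every inner block to equal the same $J_{N-1}^j$. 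Your ``track one generic row and exhibit a factorisation failure'' is in the same spirit, but it still presupposes the block-preservation lemma above before any such row can be isolated.
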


\begin{figure}
\centering
\tikzset{every picture/.style={line width=0.75pt}} 

\begin{tikzpicture}[x=1pt,y=1pt,yscale=-1,xscale=1]

\draw  [draw opacity=0] (256.83,78.17) -- (388.83,78.17) -- (388.83,211.17) -- (256.83,211.17) -- cycle ; \draw   (256.83,78.17) -- (256.83,211.17)(289.72,78.17) -- (289.72,211.17)(322.6,78.17) -- (322.6,211.17)(355.48,78.17) -- (355.48,211.17)(388.37,78.17) -- (388.37,211.17) ; \draw   (256.83,78.17) -- (388.83,78.17)(256.83,111.05) -- (388.83,111.05)(256.83,143.93) -- (388.83,143.93)(256.83,176.82) -- (388.83,176.82)(256.83,209.7) -- (388.83,209.7) ; \draw    ;

\draw (258.83,81.17) node [anchor=north west][inner sep=0.75pt]   [align=left] {$\displaystyle J_{_{N-1}}^{i_{1}}$ };
\draw (300.67,85.5) node [anchor=north west][inner sep=0.75pt]   [align=left] {$\displaystyle 0$};
\draw (333.17,85) node [anchor=north west][inner sep=0.75pt]   [align=left] {$\displaystyle 0$};
\draw (364.67,85) node [anchor=north west][inner sep=0.75pt]   [align=left] {$\displaystyle 0$};
\draw (267.17,119) node [anchor=north west][inner sep=0.75pt]   [align=left] {$\displaystyle 0$};
\draw (267.67,151) node [anchor=north west][inner sep=0.75pt]   [align=left] {$\displaystyle 0$};
\draw (268.17,184.5) node [anchor=north west][inner sep=0.75pt]   [align=left] {$\displaystyle 0$};
\draw (301.17,117.5) node [anchor=north west][inner sep=0.75pt]   [align=left] {$\displaystyle 0$};
\draw (300.67,183.5) node [anchor=north west][inner sep=0.75pt]   [align=left] {$\displaystyle 0$};
\draw (333.17,117.5) node [anchor=north west][inner sep=0.75pt]   [align=left] {$\displaystyle 0$};
\draw (333.17,151.5) node [anchor=north west][inner sep=0.75pt]   [align=left] {$\displaystyle 0$};
\draw (366.67,151.5) node [anchor=north west][inner sep=0.75pt]   [align=left] {$\displaystyle 0$};
\draw (366.17,184.5) node [anchor=north west][inner sep=0.75pt]   [align=left] {$\displaystyle 0$};
\draw (357.48,114.05) node [anchor=north west][inner sep=0.75pt]   [align=left] {$\displaystyle J_{_{N-1}}^{i_{2}}$ };
\draw (291.72,146.93) node [anchor=north west][inner sep=0.75pt]   [align=left] {$\displaystyle J_{_{N-1}}^{i_{3}}$ };
\draw (324.6,179.82) node [anchor=north west][inner sep=0.75pt]   [align=left] {$\displaystyle J_{_{N-1}}^{i_{4}}$ };

\end{tikzpicture}
\caption{A possible form of an admissible matrix $O$. The grid denotes $2^{2(N-1)}\times 2^{2(N-1)}$ submatrices that are either zero matrices or permutation matrices.}
\label{fig: matrix $O$}
\end{figure}
\begin{remark}
    These eight ``alternative points of view'' represent the exchange of $C$ and $D$ for one of the players, the change of the focal player, and combinations thereof.  We prove this theorem as a more general statement to avoid a case-by-case analysis that these exchanges are exactly what the matrices $J^1_N\ldots J^8_N$ do.
\end{remark}
\begin{proof}
    We split the proof of this statement into seven steps. 

\begin{enumerate}    
\item  We start by proving the following statement: 
\begin{lemma}
\label{st: O consists of blocks}
Any admissible matrix $O$ is `built' out of the $4\times4$ matrices from Lemma \ref{st: structure conservation memoty 1}, meaning that $O$ entirely consists out of $4\times4$ blocks of the given form.
\end{lemma}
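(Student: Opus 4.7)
The plan is to extract the two halves of the lemma from the two admissibility conditions in turn. Condition~(1) on the four-diagonal support of $\mathbf{M}_N$ will force $O$ into a $4\times 4$ block form; condition~(2) on the parametric form of the entries will then restrict each nonzero block to one of $J^1,\ldots,J^8$ using the memory-1 classification of Lemma~\ref{st: structure conservation memoty 1}.

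For the first step, the key observation from Lemma~\ref{lem: construction matrix} is that the nonzero entries of $\mathbf{M}_N$ are organised into quadruples: every row has exactly four nonzero entries occupying one ``column quadruple'' -- that is, four lex-consecutive columns sharing the same first $2N-2$ indices, with the last pair ranging over $\Sigma=\{CC,CD,DC,DD\}$. Let $\sigma$ denote the permutation underlying $O$. Conjugation by $O$ carries $(\mathbf{M}_N)_{i,j}$ to $(\mathbf{M}_N)_{\sigma^{-1}(i),\sigma^{-1}(j)}$, so the four-diagonal requirement forces $\sigma$ to send each column quadruple onto some column quadruple as a whole. A parallel argument on the row side, using that each column of $\mathbf{M}_N$ also has exactly four nonzero entries and that the block pattern of Lemma~\ref{lem: construction matrix} prescribes the grouping of rows, shows $\sigma$ permutes lex row quadruples among themselves as well. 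Indexing quadruples by $\Sigma^{N-1}$, we may then write $O$ as a $4^{N-1}\times 4^{N-1}$ matrix of $4\times 4$ blocks in which each block is either the zero block or a $4\times 4$ permutation matrix.

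For the second step, fix any nonzero block $B$ of $O$ and any row of $\mathbf{M}_N$ whose four nonzero entries lie in the column quadruple that $B$ acts on. Those entries are $p_i q_{\i}$, $p_i(1-q_{\i})$, $(1-p_i)q_{\i}$, $(1-p_i)(1-q_{\i})$ in lex order; after conjugation they reappear in the image row of $O\mathbf{M}_NO^T$ permuted by $B$. Admissibility condition~(2) demands that the result still fit the template $y'z'$, $y'(1-z')$, $(1-y')z'$, $(1-y')(1-z')$ for some $y'$ and $z'$. This is exactly the admissibility criterion already solved at memory-1, so Lemma~\ref{st: structure conservation memoty 1} yields $B\in\{J^1,\ldots,J^8\}$.

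The main obstacle lies in the row-quadruple preservation. Column quadruples appear transparently as the column-support of a single row; lex row quadruples, however, are encoded more subtly in the four-diagonal structure, since rows with the same first $2N-2$ indices are lex-consecutive but do not by themselves form the natural equivalence class arising from the Markov dynamics. An efficient alternative is to recognise the support of $\mathbf{M}_N$ as a directed de~Bruijn graph on $\Sigma^N$: condition~(1) then asserts that $\sigma$ is a graph automorphism, which is well known to act character-wise on words, and this immediately yields the desired $4\times 4$ block decomposition.
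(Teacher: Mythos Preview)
Your two-step plan is sound, and your second step --- reducing each nonzero $4\times 4$ block to the memory-1 classification via condition~(2) --- is exactly what the paper does. The trouble is in the first step. The ``parallel argument on the row side'' does not work as written: the four nonzero entries in a fixed column of $\mathbf{M}_N$ lie in rows that differ by $4^{N-1}$ (they correspond to prepending one of $CC,CD,DC,DD$ to the remaining indices), not in a lex row quadruple $\{4m{+}1,\ldots,4m{+}4\}$. You notice this and retreat to the de~Bruijn automorphism theorem, which is correct but is a substantially heavier external result than the lemma needs, and you invoke it without proof.

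The simpler observation --- and this is what the paper actually uses --- is that a separate row argument is unnecessary. Conjugation by $O$ applies the \emph{same} permutation $\sigma$ to row indices and to column indices. Your column argument already establishes that $\sigma$ sends every set $\{4m{+}1,\ldots,4m{+}4\}$ to another such set. But the lex row quadruples are literally the \emph{same} subsets of $\{1,\ldots,4^N\}$ as the lex column quadruples, so $\sigma$ automatically respects the row partition as well, and the $4\times 4$ block structure of $O$ is immediate. No de~Bruijn machinery required. (That said, the de~Bruijn route is not wasted: since a character-wise automorphism uses the same alphabet permutation at every position, it actually shows all nonzero blocks of $O$ are equal, which anticipates later steps of Theorem~\ref{st: symmetries for memory N}.)
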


\begin{proof} Suppose that the permutation matrix in question is denoted by $O$. Then the matrix $\mathbf{M}_N$ transforms into $O\mathbf{M}_NO^T$. In this setup, the left copy of $O$ is 'responsible' for some permutation $\sigma$ of the rows of $\mathbf{M}_N$; the same permutation is induced on the columns by right multiplication by $O^T$.

    Above we stated the two conditions that we require of $O\mathbf{M}_NO^T$; the second one -- (\ref{eq: quadruples}) -- is the exact type  of the quadruples that have to be the only nonzero elements in every  row. Observe that by our definition, these quadruples cannot be broken up by $
    \sigma$: if the four elements of (\ref{eq: quadruples}) are in columns $i, i+1,i+2,i+3$, then $\sigma(i), \sigma(i+1), \sigma(i+2), \sigma(i+3)$ put in some order   become  four consecutive numbers. 

    Moreover, not every permutation of (\ref{eq: quadruples}) is allowed: in fact, all the admissible permutations are given by the matrices in Lemma \ref{st: structure conservation memoty 1}.  

    Since we cannot separate the four entries  in (\ref{eq: quadruples}), $\sigma$ acts as a permutation on the set of quadruples. This concludes the proof of the lemma.
    \end{proof}
    
\item  The result is true for memory-2 strategies:
\begin{lemma}
\label{st: symmetries memory 2}
    Theorem \ref{st: symmetries for memory N} holds for $N=2$. 
\end{lemma}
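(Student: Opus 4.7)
The plan is to carry out a structural analysis starting from Lemma \ref{st: O consists of blocks}. For $N=2$ the matrix $O$ is $16 \times 16$ and splits into a $4 \times 4$ array of $4 \times 4$ blocks, each block being either the zero matrix or one of $J^1, \ldots, J^8$ from Lemma \ref{st: structure conservation memoty 1}. Because $O$ is a permutation matrix, exactly one block per block-row (and per block-column) is nonzero, so $O$ is determined by a permutation $\sigma \in S_4$ of the four block positions together with a choice of index $i_I \in \{1, \ldots, 8\}$ specifying which of the eight inner permutations occupies the nonzero block in block-row $I$.

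The first step is to constrain the outer permutation $\sigma$. I would write $\mathbf{M}_2$ in its recursive block form from Lemma \ref{lem: construction matrix} and compute $O \mathbf{M}_2 O^T$ at the block level, then impose admissibility condition (1) on the resulting four-diagonal pattern. At the block level the memory-2 transition matrix acts on the first two indices $i_1 i_2 \in \{CC, CD, DC, DD\}$ in a way that mirrors the memory-1 case; consequently $\sigma$ must preserve the same structure as in Lemma \ref{st: structure conservation memoty 1}, and thus must coincide with one of the eight permutations $J^1_1, \ldots, J^8_1$ viewed as permutations of $\{1,2,3,4\}$. Any other choice of $\sigma$ either destroys the four-diagonal arrangement or forces nonzero entries outside the allowed positions.

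The second step is to constrain the inner permutations. For $\sigma = J^i_1$, I would write out the quadruples appearing in each row of $O \mathbf{M}_2 O^T$ and demand admissibility condition (2): each quadruple must take the form $yz,\ y(1-z),\ (1-y)z,\ (1-y)(1-z)$ after a change of variables. A direct comparison row by row shows that the change of variables on the last two indices must be simultaneously compatible with all four inner blocks, forcing $i_1 = i_2 = i_3 = i_4 = i$. That is, all four nonzero blocks must equal the same $J^i_1$ as the outer permutation, which produces exactly the eight matrices $J^1_2, \ldots, J^8_2$ supplied by the recursive construction.

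For the converse direction I would verify that each $J^i_2$ is in fact admissible. This is immediate for $i = 1$, and for $i = 2$ and $i = 8$ it follows from the arguments already used in the proofs of Theorems \ref{thm: symmetry of M} and \ref{st: decomposition memory N}, where these two matrices implemented the player-swap and the $C \leftrightarrow D$ swap respectively. The remaining matrices then arise as products using the group relations in the remark after Lemma \ref{st: structure conservation memoty 1}, and admissibility is preserved under composition. The main obstacle is the bookkeeping in the second step: there are a priori $8$ inner choices per nonzero block and up to $24$ outer permutations, so the elimination of all but eight combinations has to be organised carefully by matching the change-of-variables maps dictated by each of the four nonzero inner blocks against one another.
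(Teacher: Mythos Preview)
Your approach is essentially the paper's, with one reordering: you constrain the outer block permutation $\sigma$ first and then the inner blocks, whereas the paper works the other way round. The paper fixes the inner block $J^i$ appearing in the first block-row of $O$, observes that left multiplication by $J^i$ shuffles the nonzero rows of the $\overline{M}_j$ according to the permutation defined by $J^i$ itself, and concludes that the block-column permutation (i.e.\ the outer $\sigma$) must coincide with that same permutation in order to restore the diagonal pattern; consistency across the remaining block-rows then forces every inner block to equal $J^i$ as well.

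Your step~1 (``$\sigma$ must preserve the same structure as in Lemma~\ref{st: structure conservation memoty 1}'') reaches the right conclusion, but the justification ``mirrors the memory-1 case'' is not self-standing: the outer permutation cannot be constrained independently of the inner blocks, since whether the four-diagonal structure survives depends on how the inner $J^{i_I}$ shuffle the $\overline{M}_j$. The clean way to close this is exactly the paper's mechanism above, which pins down $\sigma$ and the first inner block simultaneously rather than sequentially. Once that link is made precise, your step~2 and the converse direction are fine; the paper defers the converse (admissibility of each $J^i_2$) to a later step handling all $N$ at once, but checking it directly for $N=2$ as you propose is equally valid.
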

We start with an example that provides the intuition for how the general proof works. 
\begin{example}
We already know from Lemma \ref{st: O consists of blocks} that the matrix $O$
is built of $4\times 4$ blocks that are matrices $J^i_1$. Therefore, for memory-2 it has to have a form similar to Figure \ref{fig: matrix $O$}, consisting of sixteen $4\times 4$ matrices, four of which are nonzero, with only one nonzero matrix in every 'row' and 'column'.

Assume that a nonzero matrix occupies the upper left quadrant in Figure~\ref{fig: matrix $O$}. For the purposes of this example, assume this matrix is $J^2_1$. Consider the expression $OMO^T$, in particular, the left multiplication by~$O$. For concreteness, we again write the matrix $O$ in a certain way. 
\begin{tiny}
\begin{equation}
\label{eq: n=2 matrices permutation}
\begin{split}
 &   \begin{pmatrix}
        J^2_1 &0&0&0\\
        0&0&0&J^{i_2}_1\\
        0&J^{i_3}_1&0&0\\
        0&0&J^{i_4}_1&0
    \end{pmatrix} \begin{pmatrix}
    \M_1'&\M_2'&\M_3'&\M_4'\\
    \M_1''&\M_2''&\M_3''&\M_4''\\
    \M_1'''&\M_2'''&\M_3'''&\M_4'''\\
    \M_1''''&\M_2''''&\M_3''''&\M_4''''\\
\end{pmatrix}
\begin{pmatrix}
        (J^2_1)^T &0&0&0\\
        0&0&(J^{i_3}_1)^T&0\\
        0&0&0&(J^{i_4}_1)^T\\
        0&(J^{i_2}_1)^T&0&0
    \end{pmatrix} \\
    &=\begin{pmatrix}
   J^2_1 \M_1'&J^2_1\M_2'&J^2_1\M_3'&J^2_1\M_4'\\
   J^{i_2}_1 \M_1''''&J^{i_2}_1\M_2''''&J^{i_2}_1\M_3''''&J^{i_2}_1\M_4''''\\
    J^{i_3}_1\M_1''&J^{i_3}_1\M_2''&J^{i_3}_1\M_3''&J^{i_3}_1\M_4''\\
J^{i_4}_1\M_1'''&J^{i_4}_1\M_2'''&J^{i_4}_1\M_3'''&J^{i_4}_1\M_4'''\\
   \end{pmatrix}\begin{pmatrix}
        (J^2_1)^T &0&0&0\\
        0&0&(J^{i_3}_1)^T&0\\
        0&0&0&(J^{i_4}_1)^T\\
        0&(J^{i_2}_1)^T&0&0
    \end{pmatrix} 
    \end{split}
\end{equation}
\end{tiny}
We are only interested in the first row of the penultimate matrix in (\ref{eq: n=2 matrices permutation}), namely, $\left(J^2_1 \M_1'\ J^2_1\M_2'\ J^2_1\M_3' \ J^2_1\M_4'\right)$. 
\begin{remark}
This form of (\ref{eq: n=2 matrices permutation}) also shows that we were free to assume that $J^2_1$ was located in the upper left corner of $O$: otherwise, instead of $\M_i'$ we would have $\M_i''$, $\M_i'''$ or $\M_i''''$ - this does not affect our reasoning
\end{remark}

As noted above, left multiplication by $O$ induces a permutation $\sigma$ on the rows, and right multiplication by $O^T$ a permutation $\sigma $ on the columns. In this case, to use the standerd permutation notation,  $\sigma  = (1)(23)(4)$.

We examine how it acts on the matrices $\M_i'$. Let
\begin{equation}
    \begin{split}
        \M_1' &= \begin{pmatrix}
            a_1&a_2&a_3&a_4\\
            0&0&0&0\\
             0&0&0&0\\
              0&0&0&0\\
        \end{pmatrix}, \ \M_2' = \begin{pmatrix}
             0&0&0&0\\
              b_1&b_2&b_3&b_4\\
             0&0&0&0\\
              0&0&0&0\\
        \end{pmatrix}\\
        \M_3' &= \begin{pmatrix}
             0&0&0&0\\
             0&0&0&0\\
             c_1&c_2&c_3&c_4\\
              0&0&0&0\\
        \end{pmatrix},\  \M_4' = \begin{pmatrix}
            0&0&0&0\\
             0&0&0&0\\
              0&0&0&0\\
               d_1&d_2&d_3&d_4\\
        \end{pmatrix}.\\
    \end{split}
\end{equation}
Then 
\begin{equation}
\begin{split}
      J^2_1  \M_1' &= \begin{pmatrix}
            a_1&a_2&a_3&a_4\\
            0&0&0&0\\
             0&0&0&0\\
              0&0&0&0\\
        \end{pmatrix},  \  J^2_1\M_2' = \begin{pmatrix}
             0&0&0&0\\
             0&0&0&0\\
             b_1&b_2&b_3&b_4\\
              0&0&0&0\\
        \end{pmatrix}\\
         J^2_1 \M_3' &= \begin{pmatrix}
             0&0&0&0\\
             c_1&c_2&c_3&c_4\\
             0&0&0&0\\
              0&0&0&0\\
        \end{pmatrix}, \   J^2_1\M_4' = \begin{pmatrix}
            0&0&0&0\\
             0&0&0&0\\
              0&0&0&0\\
               d_1&d_2&d_3&d_4\\
        \end{pmatrix}\\
    \end{split}
\end{equation}
So $J_1^2\M_1' = \M_1, J_1^2\M_4' = \M_4'$ and  $J_1^2\M_2' $ becomes the new $ \M_3'$ as well as $J_1^2\M_3' $ the new $ \M_2'$. Therefore, the first four rows of $O\mathbf{M}$ have the form  
\begin{equation}
\label{eq: first for rows for memory 2}
\setlength{\arraycolsep}{3pt}
\left(\begin{array}{cccc|cccc|cccc|cccc}
a_1&a_2&a_3&a_4&0&0&0&0&0&0&0&0&0&0&0&0\\
0&0&0&0&0&0&0&0&c_1&c_2&c_3&c_4&0&0&0&0\\
    0&0&0&0& b_1&b_2&b_3&b_4& 0&0&0&0&0&0&0&0\\
 0&0&0&0&0&0&0&0&0&0&0&0&d_1&d_2&d_3&d_4\\
\end{array}\right)
\end{equation}
Observe that the placement of the nonzero blocks in $O^T$ determines how the quadruples of columns are permuted; therefore, in order to force  the submatrix in (\ref{eq: first for rows for memory 2}) to the required structure, the quadruples of the columns must be permuted with the same permutation $\sigma$ as $J^2_1$ defines. 

Therefore, the location of the nonzero blocks in the matrix $O$ must be the same as the  location of the nonzero elements in $J^2_1$, namely, 
\[
O =  \begin{pmatrix}
        J^2_1 &0&0&0\\
        0&0&J^{i_2}_1&0\\
        0&J^{i_3}_1&0&0\\
        0&0&0&J^{i_4}_1
    \end{pmatrix}.
\]

As for the matrices $J^{i_j}_1$, they must induce the same permutation on the rows that they are multiplied by, as $J^2_1$, in order for the four-diagonal structure to persist. Hence, $J^{i_2}_1 =J^{i_3}_1=J^{i_4}_1=J^2_1 $ and $O= J^2_2$.
\end{example}

Armed with the intuition provided by the example above, we can move on to the general case. 

\begin{proof}[Proof of Lemma~\ref{st: symmetries memory 2}]
In Figure \ref{fig: matrix $O$}, let the only nonzero matrix of the first row be $J^i$. Consider multiplication by $O$ from the left only; the matrix $J^i$ is going to permute  the submatrices $\overline{M}_i^j$  -- the multiplication changes which row of each matrix is nonzero, as in the example above.

Therefore, the permutation of quadruples of columns that is done by the matrix $O^T$ on the left must exchange the matrices $\overline{M}_i^j$ in such a way that they reassemble into the four-diagonal structure; hence, the permutation of columns must coincide with the 
permutation of rows, and the nonzero block structure of $O$ must be identical to that of $J^i$. 

Now, since we  have permuted columns in the certain way, the submatrices in subsequent rows must be permuted in the same way as they were in row 1. Hence, all $4\times 4$ matrices are identical to $J^i$.
\end{proof}

\item We proved that for memory-2 strategies, admissible matrices must have a very specific structure. Next, we demonstrate this structure persists with increasing memory.
\begin{lemma}
\label{st: O n consists of blocks}
    The matrix $O$ has the same principal block structure as the matrix in Figure~\ref{fig: matrix $O$}, or the admissible matrices for $N=2$: if we divide $O$ into sixteen equal-sized parts, then only one of the blocks per each row and column will contain nonzero elements. 
\end{lemma}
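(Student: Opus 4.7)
My plan is to generalize the argument from Lemma~\ref{st: symmetries memory 2} to operate at the coarse $4 \times 4$ block level with blocks of size $2^{2(N-1)} \times 2^{2(N-1)}$. The key observation is that $\mathbf{M}_N$ has a self-similar structure at this scale: by iterating Lemma~\ref{lem: construction matrix}, each coarse block $\overline{M}_{a,b}$ has its non-zero entries confined to a specific sub-block-row of size $2^{2(N-2)}$, determined by $b$. This exactly parallels the $4 \times 4$ coarse sparsity pattern of $\mathbf{M}_2$ that was the setting for the base case, so the memory-2 argument should lift verbatim one level up.

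First, I would invoke Lemma~\ref{st: O consists of blocks} to note that $O$ is a permutation at the level of fine quadruples, and then view $O$ at the coarse $4 \times 4$ block level. Following the template of the memory-2 proof, I would assume that the first coarse block-row of $O$ contains a non-zero block at coarse position $(1, J_0)$ for some $J_0 \in \{1,2,3,4\}$ and analyze the product $O \mathbf{M}_N O^T$. The left multiplication by this block permutes the sub-block-row patterns within the coarse blocks $\overline{M}_{J_0, a}$ of $\mathbf{M}_N$, analogously to the manipulation in~(\ref{eq: n=2 matrices permutation}).

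Second, I would argue that for $O \mathbf{M}_N O^T$ to still obey the four-diagonal coarse structure required of a memory-$N$ transition matrix, the column permutation induced by $O^T$ must reshuffle the transformed coarse blocks so as to restore the correct sparsity pattern. If the first coarse block-row of $O$ contained two or more non-zero coarse blocks, their superimposed contributions to $O \mathbf{M}_N$ would create non-zero entries in multiple sub-block-rows within each coarse block-column of the product, and no permutation of coarse block-columns could compress this back to a single sub-block-row as the structure demands. Hence the first coarse block-row of $O$ must contain exactly one non-zero coarse block; applying the same reasoning to each subsequent coarse block-row, combined with the fact that $O$ is a permutation, yields one non-zero coarse block per coarse block-row and per coarse block-column.

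The main obstacle I expect is rigorously justifying the ``non-reassembly'' claim in the second step: one must show that the simultaneous action of two or more non-zero coarse blocks in a single coarse block-row of $O$ genuinely destroys the sub-block-row sparsity pattern in a way no column permutation can repair. This ultimately relies on the fine-scale admissibility constraint, namely that within each coarse block the action of $O$ is one of the eight types $J^i$ from Lemma~\ref{st: structure conservation memoty 1}, and that only these $4 \times 4$ permutations preserve the quadruple form~(\ref{eq: quadruples}). The coarse-level analogue of this constraint, to be established by a careful case analysis, is what pins down the coarse structure of $O$ to that of Figure~\ref{fig: matrix $O$}.
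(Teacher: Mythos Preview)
Your approach is genuinely different from the paper's, and the gap you yourself flag --- the ``non-reassembly'' claim --- is real and not easily closed at the coarse level. The paper avoids it entirely by arguing \emph{bottom-up} rather than top-down. Instead of partitioning $O$ into sixteen coarse blocks and reasoning by contradiction, the paper tracks the permutation $\sigma$ induced by $O$ on individual indices and proves an iterative consecutiveness property: since $m_{11}$ is sent to $m_{\sigma(1)\sigma(1)}$, it stays on the main diagonal; quadruple preservation (Lemma~\ref{st: O consists of blocks}) forces $\sigma(1),\sigma(2),\sigma(3),\sigma(4)$ to be four consecutive integers; but rows $2,3,4$ carry their quadruples in columns $5$--$8$, $9$--$12$, $13$--$16$, and these column-quadruples must again land in adjacent positions for the four-diagonal pattern to survive, forcing $\sigma(1),\ldots,\sigma(16)$ to be consecutive; iterating, $\sigma(1),\ldots,\sigma(4^k)$ are consecutive for each $k\le N-1$. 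At $k=N-1$ this says the first diagonal of $\mathbf{M}_N$ maps wholesale to a single diagonal, which is exactly the coarse block structure.

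What your framing buys is conceptual clarity about the self-similarity, but it defers the actual work to the non-reassembly step, and to carry that step out rigorously you would essentially need to rediscover the paper's growing-consecutiveness argument inside it. In particular, knowing only that $O$ respects $4\times 4$ fine blocks does not by itself prevent $O$ from scattering the rows of one coarse block across several coarse block-columns in a way that a carefully chosen column permutation could repair; ruling this out requires exactly the chain of constraints the paper exploits. So your plan is not wrong in spirit, but the paper's route is shorter and dispenses with the case analysis you anticipate.
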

\begin{proof}
This proof is based on the fact that the permutation $\sigma$ must act in a way that preserves the four-diagonal structure. To demonstrate that the statement of the lemma is true, we need to show that the matrix $O$ does not mix the elements from two different diagonals (however, it can permute the quadruples inside the diagonal and the elements within the quadruples -- as long as the overall structure of the diagonal is preserved). 
We use the first diagonal as an example; the proof can be easily adapted to the remaining three. 

Again, denote the permutation of the numbers $1,\ldots, 2^{2N}$ by $\sigma$. Observe that the element in the upper left corner of $\mathbf{M}(\p,\q)$ (call it $m_{11}$) will become $m_{\sigma(1)\sigma(1)}$ after the conjugation and therefore must belong to the main diagonal of the matrix. 

As demonstrated in Lemma \ref{st: O consists of blocks}, the elements from the first quadruple must lie next to $m_{\sigma(1),\sigma(1)}$ - hence, put in some order, $\sigma(1),\sigma(2),\sigma(3),\sigma(4)$ must be four consecutive numbers. 

Now, $\sigma(2)$ is the row index of the image of the elements of the second quadruple, $\sigma(3)$ of the third and so on. Therefore, the images of the first four rows are consecutive rows. Since the column indices of the elements of the first four quadruples must form a single block as well, the numbers $\sigma(1),\ldots,\sigma(16)$ in some order are 16 consecutive numbers. therefore, the images of the first 16 rows form one block and the numbers $\sigma(1),\ldots,\sigma(64)$ in some order are consecutive numbers, and so on. 

Hence, the image of one diagonal is the entirety of some other diagonal, and the matrix $O$ has to have a form similar to the one in Figure \ref{fig: matrix $O$}.
\end{proof}

\item Observe that the matrices $J_{N-1}^j$ permute the matrices $\overline{M}_i^j$ the same way that their predecessors do at the $N-1$st step - this is clear from writing the transition matrix as in (\ref{eq: memory N-1 matrix}) and multiplying it by $J^i_{N-1}$. 

\item From the previous point we conclude that the matrix $O$ just permutes elements $\overline{M}_i^j$. Therefore, if the matrix $J_i^j$ turns $\overline{M}_i^j$ into $\overline{M}_p^q$, it must 'put' it in its 'appropriate' place in order to form a diagonal - completely analogously to point 2.  

\item From the last point, we can conclude the proof by applying exactly the same reasoning as in the last step of the proof of Lemma \ref{st: symmetries memory 2}; therefore, any matrix $O$ must be a recursively constructed one. 

\item Having demonstrated that the only matrices that \textit{can} conserve that transition matrix structure are $J^i_N$, we demonstrate that the matrices in question \textit{do} indeed possess the required property. 

\begin{lemma}
The matrices $J_N^i$, $i=1,\ldots, 8$, are admissible for memory-$N$ games.
\end{lemma}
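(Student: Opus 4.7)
The plan is to proceed by induction on $N$. The base case $N=1$ is already covered by Lemma~\ref{st: structure conservation memoty 1}, where $J_1^1,\ldots,J_1^8$ are explicitly shown to satisfy both admissibility conditions: they preserve the four-diagonal form of $\mathbf{M}_1$ under conjugation, and for each one can exhibit a concrete change of variables (the displayed substitutions for $J^3$ and $J^4$, for instance) that brings each quadruple back to the canonical form $yz,\ y(1-z),\ (1-y)z,\ (1-y)(1-z)$.

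For the inductive step, fix $i\in\{1,\ldots,8\}$ and suppose $J_{N-1}^i$ is admissible for memory-$(N-1)$ games. I would first write $\mathbf{M}_N$ in the block form of Lemma~\ref{lem: construction matrix} as a $4\times 4$ array of the $2^{2(N-1)}\times 2^{2(N-1)}$ blocks $\overline{M}_k^\ell$, and write $J_N^i$ in its corresponding $4\times 4$ block form, where each nonzero block of $J_1^i$ is replaced by a copy of $J_{N-1}^i$. Carrying out the conjugation $J_N^i\,\mathbf{M}_N\,(J_N^i)^T$ block-by-block, the computation factors cleanly: the outer $J_1^i$-pattern permutes the sixteen $\overline{M}_k^\ell$ blocks exactly the way $J_1^i$ permutes the entries of a $4\times 4$ memory-1 matrix, while simultaneously each surviving block is conjugated by $J_{N-1}^i$. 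Admissibility of $J_1^i$ at the outer level (base case) guarantees the resulting arrangement is again four-diagonal; admissibility of $J_{N-1}^i$ at the inner level (induction hypothesis) guarantees each individual block is a valid memory-$(N-1)$ transition block after a prescribed substitution.

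To verify condition~(2), I would combine the two changes of variables. The outer change, coming from $J_1^i$, acts only on the first pair of indices $(i_1,i_2)$ of every entry $p_{i_1\ldots i_{2N}}$ and $q_{i_1\ldots i_{2N}}$ — it may swap these indices between $\mathbf{p}$ and $\mathbf{q}$ and/or complement them by the $x\mapsto 1-x$ rule. The inner change, coming from $J_{N-1}^i$, acts only on the remaining indices $(i_3,\ldots,i_{2N})$ by the same recipe applied to the last $N-1$ rounds. Because these two substitutions operate on disjoint index slots, they commute and compose to a single well-defined substitution on the full memory-$N$ strategy space, and under this substitution every quadruple in $J_N^i\,\mathbf{M}_N\,(J_N^i)^T$ reverts to the canonical form in~(\ref{eq: quadruples}).

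The main technical obstacle will be making the "disjoint slot" argument fully rigorous: one needs to check that for each of the eight choices of $i$, the substitution rules coming from the outer $J_1^i$ and the inner $J_{N-1}^i$ do not interact — in particular, that neither substitution redefines a quantity the other one depends on. I expect this to reduce to the observation, inspectable already in the memory-1 table of Lemma~\ref{st: structure conservation memoty 1}, that the admissible substitutions are index-local (each new $p$- or $q$-entry is defined in terms of a single old entry, possibly complemented), so round-by-round independence of the two substitutions is automatic. Once this is confirmed, the induction closes and all $J_N^i$ are admissible.
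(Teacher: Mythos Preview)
Your proposal is correct and follows essentially the same inductive approach as the paper: both use the base case $N=1$ from Lemma~\ref{st: structure conservation memoty 1} and then exploit the recursive block structure of $J_N^i$ to decompose the conjugation $J_N^i\,\mathbf{M}_N\,(J_N^i)^T$ into an outer block permutation (governed by the $J_1^i$-pattern) together with inner conjugations by $J_{N-1}^i$ handled by the induction hypothesis. You are somewhat more explicit than the paper about verifying condition~(2) via the ``disjoint slot'' change-of-variables argument, which the paper's terse proof leaves largely implicit.
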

\begin{proof}
We demonstrate this by induction. It was already demonstrated that the statement holds for $N=1$ (Lemma~\ref{st: structure conservation memoty 1}). Assume that it holds for $N-1$ and, as per usual, we proceed to show it for $N$. 

Consider a matrix $J^i_{N}$; it is built of 16 $2^{2(N-1)}\times 2^{2(N-1)}$ blocks, the nonzero ones identical to $J_{N-1}^i$; the latter, by assumption, preserves the transition matrix structure of $\mathbf{M}_{N-1}$.

We employ the representation of $\mathbf{M}_{N-1}$ as in (\ref{eq: memory N-1 matrix}). By Lemma \ref{st: O n consists of blocks}, $J_{N-1}^i$ permute the matrices $M_i$; denote this permutation by $\sigma$. 
Therefore, the matrices $\M_i^j$ in $\mathbf{M}_{N}$ undergo the same permutation from the left multiplication by $J^i_{N}$. The four columns of blocks must be permuted accordingly, which is achieved by the  recursive structure of $J^i_N$. 
\end{proof}
\end{enumerate}
These considerations complete our proof of Theorem~\ref{st: symmetries for memory N}. 
\end{proof}


\section{Adaptive dynamics for memory N repeated donation game}
After having described some of the structure of transition matrices of memory-$N$ games, this section is dedicated to investigating the properties of the resulting adaptive dynamics.
That is, we explore what happens when populations of memory-$N$ players are continually moving into the direction of the payoff gradient. 

\label{sec: adaptive dynamics}
\subsection{ $\mathbf{Z}_2$-symmetry}
In general, it is hard to determine whether adaptive dynamics have continuous symmetries. However, for memory-1 strategies, it has been shown there is a $\mathbb{Z}_2$-symmetry (that is to say, the dynamics are invariant under some transformation of order 2):
\begin{theorem}[\cite{laporte2023adaptive}]
    If  $\left(p_{CC}(t),p_{CD}(t),p_{DC}(t), p_{DD}(t)\right)$ is a trajectory of adaptive dynamics for a given repeated donation game, then so is\newline $\left(1-p_{DD}(-t),1-p_{DC}(-t),1-p_{CD}(-t), 1-p_{DD}(-t) \right)$.
\end{theorem}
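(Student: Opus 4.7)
The trajectory to be tested is $\mathbf{r}(t):=\varphi(\p(-t))$, where $\varphi$ is the $C\leftrightarrow D$ transformation of Section~\ref{sec: exchanging c and d}; for $N=1$ one reads off $\varphi(p_{CC},p_{CD},p_{DC},p_{DD})=(1-p_{DD},1-p_{DC},1-p_{CD},1-p_{CC})$, which matches the form in the statement. Writing $\varphi(\p)=\mathbf{1}-J_1^8\p$, my plan is first to establish a global payoff identity
\[
A(\varphi(\p),\varphi(\q))\;=\;K_1-A(\p,\q),
\]
then differentiate it to obtain an identity for the payoff gradient on the diagonal $\p=\q$, and finally verify that $\mathbf{r}(t)$ solves the adaptive dynamics equation.

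For the payoff identity, Theorem~\ref{thm: symmetry of M} gives $\mathbf{M}_1(\varphi(\p),\varphi(\q))=J_1^8\mathbf{M}_1(\p,\q)J_1^8$. Since $J_1^8$ is a symmetric involution, a short computation shows that the invariant distributions are related by $\nu(\varphi(\p),\varphi(\q))=J_1^8\nu(\p,\q)$. Combined with the relation $J_1^8\mathbf{f}_1=-\mathbf{f}_1+K_1\mathbf{1}$ from Lemma~\ref{st: lemma payoff vector zero sum} (which requires equal gains from switching, exactly what the donation game provides) and with $\langle\nu,\mathbf{1}\rangle=1$, this delivers the displayed identity via $A(\varphi(\p),\varphi(\q))=\langle J_1^8\nu(\p,\q),\mathbf{f}_1\rangle=\langle\nu(\p,\q),J_1^8\mathbf{f}_1\rangle$.

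Differentiating both sides in $\p$, the chain rule with Jacobian $\partial\varphi/\partial\p=-J_1^8$, together with the involutivity and symmetry of $J_1^8$, yields
\[
\frac{\partial A(\tilde{\p},\tilde{\q})}{\partial\tilde{\p}}\Big|_{\tilde{\p}=\tilde{\q}=\varphi(\p)}\;=\;J_1^8\,\frac{\partial A(\p,\q)}{\partial\p}\Big|_{\p=\q}.
\]
Now $\dot{\mathbf{r}}(t)=J_1^8\,\dot{\p}(-t)$, and substituting the adaptive dynamics equation for $\p$ at time $-t$ and then applying the gradient identity at $\p(-t)$ reproduces the adaptive dynamics equation for $\mathbf{r}$ at time $t$.

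The main obstacle is recognising that the two earlier structural results (Theorem~\ref{thm: symmetry of M} and Lemma~\ref{st: lemma payoff vector zero sum}) combine to make the payoff anti-symmetric (up to an additive constant) under $\varphi$; once this is in hand, the remainder is bookkeeping. Three sign flips appear --- the Jacobian $-J_1^8$ of $\varphi$, the time reversal $t\mapsto -t$, and the sign change $A\mapsto K_1-A$ --- and they conspire to cancel. Care is needed when moving $J_1^8$ across the chain-rule sum, where its symmetry is essential.
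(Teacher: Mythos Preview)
Your proof is correct and follows essentially the same route as the paper's proof of the memory-$N$ generalisation (Theorem~\ref{st:exchange}): both combine Theorem~\ref{thm: symmetry of M} with Lemma~\ref{st: lemma payoff vector zero sum} to see that the $C\leftrightarrow D$ transformation negates the payoff up to an additive constant, and then translate this into equivariance of the adaptive vector field under $\p\mapsto\varphi(\p)$ coupled with time reversal. Your packaging via the single identity $A(\varphi(\p),\varphi(\q))=K_1-A(\p,\q)$ is slightly more direct than the paper's phrasing through Lemmas~\ref{st: identical game} and~\ref{st: lemma adding vector}, but the content is the same.
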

\noindent
The following theorem generalises the above theorem to memory-$N$ games.
\begin{theorem}
\label{st:exchange}
    If $(p_{CC\ldots C}(t),\ldots, p_{DD\ldots D}(t))$ is a trajectory of the adaptive dynamics for a given donation game, then so is $(1- p_{DD\ldots D}(-t),\ldots, 1-p_{CC\ldots C}(t))$.  
\end{theorem}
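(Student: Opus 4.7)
The plan is to exhibit $\td{\p}(t):=\varphi(\p(-t))$ as a solution of the adaptive dynamics by showing that the involution $\varphi$ on strategy space intertwines the vector field with $J_N^8$. Writing $\varphi(\p)=\mathbf{1}_N-J_N^8\p$ makes the transformation affine, with $\varphi\circ\varphi=\mathrm{id}$ and constant Jacobian $-J_N^8$. The key algebraic identity I would aim for is
\begin{equation*}
A\!\left(\varphi(\p),\varphi(\q)\right) \;=\; K_N \;-\; A(\p,\q),
\end{equation*}
valid on the donation game (so that Lemma \ref{st: lemma payoff vector zero sum} applies).

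To derive this identity, I would use the form $A(\p,\q)=\langle \nu(\p,\q),\mathbf{f}_N\rangle$. By Theorem \ref{thm: symmetry of M} and $(J_N^8)^2=I$, the identity $\nu(\varphi(\p),\varphi(\q))^T\mathbf{M}_N(\varphi(\p),\varphi(\q))=\nu(\varphi(\p),\varphi(\q))^T$ becomes, after right-multiplication by $J_N^8$, an eigenvector equation for $\mathbf{M}_N(\p,\q)$: the vector $J_N^8\nu(\varphi(\p),\varphi(\q))$ is left-fixed and sums to one, so by uniqueness of the invariant distribution
\begin{equation*}
\nu(\varphi(\p),\varphi(\q)) \;=\; J_N^8\,\nu(\p,\q).
\end{equation*}
Combining this with Lemma \ref{st: lemma payoff vector zero sum} ($J_N^8\mathbf{f}_N=-\mathbf{f}_N+K_N\mathbf{1}_N$) and $\langle\nu(\p,\q),\mathbf{1}_N\rangle=1$ gives the identity above.

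Next I would differentiate the identity in $\p$ (with $\q$ frozen) and apply the chain rule through $\varphi$. Since $\partial\varphi/\partial\p=-J_N^8$ and $J_N^8$ is symmetric,
\begin{equation*}
-J_N^8\,\frac{\partial A(\mathbf{u},\varphi(\q))}{\partial\mathbf{u}}\bigg|_{\mathbf{u}=\varphi(\p)}
\;=\;-\,\frac{\partial A(\p,\q)}{\partial\p},
\end{equation*}
and multiplying by $J_N^8$ yields
\begin{equation*}
\frac{\partial A(\mathbf{u},\varphi(\q))}{\partial\mathbf{u}}\bigg|_{\mathbf{u}=\varphi(\p)}
\;=\; J_N^8\,\frac{\partial A(\p,\q)}{\partial\p}.
\end{equation*}
Restricting to the diagonal $\p=\q$ and setting $F(\p):=\partial_{\mathbf{x}}A(\mathbf{x},\p)|_{\mathbf{x}=\p}$ for the adaptive dynamics vector field, this produces the equivariance
\begin{equation*}
F(\varphi(\p)) \;=\; J_N^8\,F(\p).
\end{equation*}

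With this in hand, the final verification is direct: if $\p(t)$ solves $\dot{\p}=F(\p)$ and $\td{\p}(t):=\varphi(\p(-t))=\mathbf{1}_N-J_N^8\p(-t)$, then
\begin{equation*}
\dot{\td{\p}}(t) \;=\; J_N^8\,\dot{\p}(-t) \;=\; J_N^8\,F(\p(-t)) \;=\; F(\varphi(\p(-t))) \;=\; F(\td{\p}(t)),
\end{equation*}
so $\td{\p}$ is another trajectory, which is exactly the claim.

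The main obstacle I expect is the chain-rule bookkeeping in the second step: one must carefully distinguish the gradient with respect to $\p$ of the composite $A(\varphi(\p),\varphi(\q))$ from the gradient in the first slot of $A$ evaluated at $\varphi(\p)$, and ensure that $J_N^8$ ends up on the correct side when passing to the diagonal. A minor secondary point is the need to restrict to $\p,\q\in(0,1)^{2^{2N}}$ so that $\mathbf{M}_N(\p,\q)$ has a unique invariant distribution, justifying the equality $\nu(\varphi(\p),\varphi(\q))=J_N^8\nu(\p,\q)$ used at the start; since the adaptive dynamics itself is studied on this open cube, this restriction is harmless.
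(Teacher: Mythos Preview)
Your proof is correct and follows essentially the same route as the paper: both arguments use Theorem~\ref{thm: symmetry of M} to obtain $\nu(\varphi(\p),\varphi(\q))=J_N^8\nu(\p,\q)$, invoke Lemma~\ref{st: lemma payoff vector zero sum} to replace $J_N^8\mathbf{f}_N$ by $-\mathbf{f}_N+K_N\mathbf{1}_N$, and then push the chain rule through $\varphi$ to arrive at the equivariance $F(\varphi(\p))=J_N^8F(\p)$. The only cosmetic difference is that you first isolate the scalar identity $A(\varphi(\p),\varphi(\q))=K_N-A(\p,\q)$ and differentiate it, whereas the paper packages the same steps via two auxiliary lemmata (Lemma~\ref{st: identical game} and Lemma~\ref{st: lemma adding vector}) before carrying out the analogous chain-rule computation on $\langle\nu,\mathbf{f}_N\rangle$ directly.
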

The key observation required to prove Theorem \ref{st:exchange} is that when the vectors $\p$ and $\q$ undergo certain transformations, the  game can stay the same through changing the payoff vector and reversing the time. We summarise these statements in the following lemma:

\begin{lemma}
\label{st: identical game}
\begin{enumerate}
\item Time reversibility in a memory-$N$ game can be obtained through reversing the time $t$ and the sign of the payoff vector -- simultaneously done, these two transformations preserve the original game. 
\item Exchanging indices C and D is equivalent to reversing the order in which we write the entries of the payoff vector. Namely, $\mathbf{f}_N\mapsto J_N^8\mathbf{f}_N$. 
\end{enumerate}
\end{lemma}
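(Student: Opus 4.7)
The plan is to treat the two items separately, since they rely on different structural facts about the payoff function $A(\p,\q) = \langle \nu(\p,\q),\mathbf{f}_N\rangle$. Part~(1) is essentially immediate from the linearity of this payoff in $\mathbf{f}_N$, while part~(2) leverages Theorem~\ref{thm: symmetry of M} together with the fact that $J_N^8$ is an involutive symmetric permutation matrix.

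For part~(1), I would observe that replacing $\mathbf{f}_N$ by $-\mathbf{f}_N$ sends $A$ to $-A$, hence sends $\partial A/\partial\p$ to $-\partial A/\partial\p$. The adaptive dynamics ODE $\dot{\p}=\partial A(\p,\q)/\partial\p\vert_{\p=\q}$ then becomes $\dot{\p}=-\partial A(\p,\q)/\partial\p\vert_{\p=\q}$, which is the original equation read after the time reversal $t\mapsto -t$ (since this flips the sign of $\dot{\p}$). Thus the two reversals cancel and every orbit of the original system, when read backwards in time, is an orbit of the system obtained by negating the payoff vector.

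For part~(2), I would apply Theorem~\ref{thm: symmetry of M} to get $\mathbf{M}_N(\varphi(\p),\varphi(\q)) = J_N^8\mathbf{M}_N(\p,\q)J_N^8$. Using that $J_N^8$ is symmetric and satisfies $(J_N^8)^2=I$, I can left- and right-multiply the identity $\nu^T\mathbf{M}_N(\p,\q)=\nu^T$ by $J_N^8$ to deduce $(J_N^8\nu)^T\mathbf{M}_N(\varphi(\p),\varphi(\q))=(J_N^8\nu)^T$. Since $J_N^8$ is a permutation, $J_N^8\nu$ still has entries summing to one, so by the uniqueness of the invariant distribution on the open cube we obtain $\nu(\varphi(\p),\varphi(\q)) = J_N^8\nu(\p,\q)$. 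Plugging this into the payoff formula and using self-adjointness of $J_N^8$ yields
\[
A(\varphi(\p),\varphi(\q)) = \langle J_N^8\nu(\p,\q),\mathbf{f}_N\rangle = \langle \nu(\p,\q),J_N^8\mathbf{f}_N\rangle,
\]
which is precisely the payoff one would compute with the unchanged strategies but payoff vector $J_N^8\mathbf{f}_N$. This is the claimed equivalence.

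The main obstacle, I expect, is the careful bookkeeping in part~(2): verifying that $\varphi$ sends $(0,1)^{2^{2N}}$ into itself so the uniqueness of the invariant distribution still applies, and tracking the normalization of $J_N^8\nu$. Once that is in place, both parts are short, and the second part serves as the conceptual payoff, since it reduces the rather elaborate action of $\varphi$ on the strategy space to the much simpler linear action of $J_N^8$ on the payoff vector. Combined with part~(1), this is precisely what will be needed to deduce Theorem~\ref{st:exchange} afterwards.
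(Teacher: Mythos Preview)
Your proposal is correct and follows essentially the same approach as the paper. The only cosmetic difference is that for part~(2) the paper invokes Lemma~\ref{st: lemma classification} (invariance of the payoff under $\nu\mapsto O\nu$, $\mathbf{M}_N\mapsto O\mathbf{M}_NO^T$, $\mathbf{f}_N\mapsto O\mathbf{f}_N$ for orthogonal $O$) together with Theorem~\ref{thm: symmetry of M}, whereas you unpack this invariance directly by tracking how the invariant distribution transforms under conjugation by $J_N^8$; the underlying computation is identical.
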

\begin{proof}
\begin{enumerate}
 \item Reversing time leads to the transformation $\dot{\p}\mapsto-\dot{\p}$. This transformation can be reversed by multiplying the payoff vector by $-1$. 
 \item The second statement is a corollary of Theorem~\ref{thm: symmetry of M} and Lemma~\ref{st: lemma classification}. Exchanging the notions of cooperating and defecting throughout the game means swapping $\p$ for $\mathbf{1}_N - J^8_N\p$ and $\q$ for $\mathbf{1}_N - J^8_N\q$, which is the same as conjugating the matrix $\mathbf{M}_N$ by $J_N^8$. To preserve the payoff function, one needs to multiply $\mathbf{f}_N$ by the same matrix.
 \end{enumerate}
\end{proof}
\noindent
We introduce another transformation of the payoff vector that does not affect the adaptive dynamics:
\begin{lemma}
\label{st: lemma adding vector}
    Adding a vector of the form $C\mathbf{1}_N$ to the payoff vector (i.e., adding a constant~$C$ to all payoffs) does not change the adaptive dynamics.
\end{lemma}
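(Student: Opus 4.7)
The plan is to observe that adding $C\mathbf{1}_N$ to the payoff vector merely shifts the payoff function $A(\p,\q)$ by a constant, and constants vanish under the partial derivative defining the adaptive dynamics. Concretely, recall from~\eqref{eq: payoff function initial} that $A(\p,\q)=\langle \nu(\p,\q),\mathbf{f}_N\rangle$, where $\nu(\p,\q)$ is the invariant distribution of the Markov chain and therefore satisfies $\langle \nu(\p,\q),\mathbf{1}_N\rangle = 1$. Denoting by $A'(\p,\q)$ the payoff function associated with the shifted payoff vector $\mathbf{f}_N + C\mathbf{1}_N$, one computes
\begin{equation*}
A'(\p,\q) = \langle \nu(\p,\q),\mathbf{f}_N + C\mathbf{1}_N\rangle = \langle \nu(\p,\q),\mathbf{f}_N\rangle + C\langle \nu(\p,\q),\mathbf{1}_N\rangle = A(\p,\q) + C.
\end{equation*}

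Since $A'$ and $A$ differ by a constant (independent of $\p$), their partial derivatives with respect to the entries of $\p$ coincide, and in particular
\begin{equation*}
\dot{\p}' = \frac{\partial A'(\p,\q)}{\partial \p}\Big|_{\p=\q} = \frac{\partial A(\p,\q)}{\partial \p}\Big|_{\p=\q} = \dot{\p},
\end{equation*}
so the adaptive dynamics are unchanged.

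As an alternative (and completely equivalent) route avoiding direct reference to $\nu$, one can invoke Lemma~\ref{st: payoff function} and exploit linearity of the determinant in its last column: replacing $\mathbf{f}_N$ by $\mathbf{f}_N + C\mathbf{1}_N$ gives
\begin{equation*}
\bigl|\widetilde{\mathbf{M}}_N(\p,\q)\ \ \mathbf{f}_N + C\mathbf{1}_N\bigr| = \bigl|\widetilde{\mathbf{M}}_N(\p,\q)\ \ \mathbf{f}_N\bigr| + C\bigl|\widetilde{\mathbf{M}}_N(\p,\q)\ \ \mathbf{1}_N\bigr|,
\end{equation*}
so the numerator of~\eqref{eq:payoff function} picks up exactly $C$ times the denominator, again yielding $A'(\p,\q)=A(\p,\q)+C$ and the same conclusion. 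There is no real obstacle in either argument; the only substantive ingredient is the normalisation $\langle\nu,\mathbf{1}_N\rangle =1$ (equivalently, the fact that $\mathbf{1}_N$ lies in the right nullspace of $\mathbf{M}_N - I$, which is the very reason $\mathbf{1}_N$ appears in the denominator of the Press--Dyson-type formula).
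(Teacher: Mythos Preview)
Your proposal is correct. The paper's proof is precisely your ``alternative'' route via Lemma~\ref{st: payoff function} and linearity of the determinant in the last column; your primary argument through the normalisation $\langle\nu,\mathbf{1}_N\rangle=1$ is an equally valid (and arguably more transparent) variant of the same idea.
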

   \begin{remark}
   To stress the dependence of the payoff function  $A(\p,\q)$ on the payoff vector $\mathbf{f}$, in this section we will write $A(\p,\q,\mathbf{f})$.
\end{remark}
\begin{proof}[Proof of Lemma~\ref{st: lemma adding vector}]
   We use the form of the payoff function in (\ref{eq:payoff function}); suppose we tweak it by taking the payoff vector to be $\mathbf{f}_N + C\mathbf{1}_N$. From the linearity of the determinant, 
   \begin{equation*}
   \begin{split}
A(\mathbf{p},\mathbf{q},\mathbf{f}_N + C\mathbf{1}_N) = &\frac{\begin{vmatrix}\widetilde{\mathbf{M}}_N(\mathbf{p},\mathbf{q}) & \mathbf{f}_N + C\mathbf{1}_N\end{vmatrix}}{\begin{vmatrix}\widetilde{\mathbf{M}}_N(\mathbf{p},\mathbf{q}) & \mathbf{1}_N\end{vmatrix}}  =  \frac{\begin{vmatrix}\widetilde{\mathbf{M}}_N(\mathbf{p},\mathbf{q}) & \mathbf{f}_N \end{vmatrix}}{\begin{vmatrix}\widetilde{\mathbf{M}}_N(\mathbf{p},\mathbf{q}) & \mathbf{1}_N\end{vmatrix}} +\\& \frac{\begin{vmatrix}\widetilde{\mathbf{M}}_N(\mathbf{p},\mathbf{q}) & C\mathbf{1}_N\end{vmatrix}}{\begin{vmatrix}\widetilde{\mathbf{M}}_N(\mathbf{p},\mathbf{q}) & \mathbf{1}_N\end{vmatrix}} = A(\p,\q,\mathbf{f}_N) + C.
\end{split}
   \end{equation*}
Therefore, 
\[
\frac{\partial A(\p,\q,\mathbf{f}_N + C\mathbf{1}_N)}{\partial \p}\Big|_{\p=\q} = \frac{\partial A(\p,\q,\mathbf{f}_N)+C }{\partial \p}\Big|_{\p=\q}  = \frac{\partial A(\p,\q,\mathbf{f}_N) }{\partial \p}\Big|_{\p=\q},
\]
and the adaptive dynamics are unchanged.
\end{proof}

\noindent
Based on the three statements above, we are now equipped to prove Theorem~\ref{st:exchange}. 

\begin{proof}[Proof of Theorem \ref{st:exchange}]
We introduce the following transformations:
\begin{equation}
\label{eq: amended game}
    \begin{split}
        &(p_{C\ldots C},\ldots p_{D\ldots D})\to(1-p_{D\ldots D}, \ldots,1-p_{C\ldots C}):=\mathbf{y}\\
         &(q_{C\ldots C},\ldots q_{D\ldots D})\to(1-q_{D\ldots D}, \ldots,1-q_{C\ldots C}):=\mathbf{z}\\
        &G(\mathbf{f}):(f_1,\ldots ,f_{2^{2N}})\mapsto (-f_{2^{2N}},\ldots,-f_1).\\
        &t\to -t =:\kappa
    \end{split}
\end{equation}

\noindent
Lemma \ref{st: identical game} tells us that this game is in fact identical to our original game. Additionally, 
\[
G(\mathbf{f})  = -J^8_N\mathbf{f}_N = \mathbf{f}_N+K_N\mathbf{1},
\]
as per Lemma \ref{st: lemma payoff vector zero sum}.

Lemma \ref{st: lemma adding vector} entails that the vector $K_N\mathbf{1}$ can be disregarded. That is, we can assume that the payoff vector in the amended game (\ref{eq: amended game}) is $\mathbf{f}_N$. 

After these preparations, let us write the adaptive dynamics for (\ref{eq: amended game}). We denote the derivative with respect to $\kappa$ by a dash. The new payoff function is
\[
A_y:=A(\mathbf{y},\mathbf{z},\mathbf{f}_N) = \left<\nu_y, \mathbf{f}_N\right>,
\]
where $\nu_y$ is the left unit eigenvector of the transition matrix of (\ref{eq: amended game}).
Theorem~\ref{st:exchange} states that the transition matrix for the augmented game is $\mathbf{M}(\mathbf{y},\mathbf{z}) = J^8\mathbf{M}(\p,\q)J^8$; hence, the eigenvector $\nu_y = J^8\nu_p$. Bearing this in mind, we write for the adaptive dynamics:
\begin{equation}
\begin{split}
\mathbf{y}' &= \frac{\partial A(\mathbf{y},\mathbf{z},\mathbf{f}_N) }{\partial\mathbf{y}}\Big|_{\mathbf{y} =\mathbf{z}} = \frac{\partial \left<\nu_y,\mathbf{f}_N\right>}{\partial\mathbf{y}}\Big|_{\mathbf{y} =\mathbf{z}} \\&=\frac{\partial\left<J^8\nu_p,\mathbf{f}_N\right>}{\partial\mathbf{p}}\frac{\partial\mathbf{p}}{\partial{\mathbf{y}}}\Big|_{\p=\q} = -(J^8)\frac{\partial\left<\nu_p,J^8\mathbf{f}_N\right>}{\partial\mathbf{p}}\Big|_{\p=\q} \\&=-(J^8)\frac{\partial\left<\nu_p,-\mathbf{f}_N\right>}{\partial\mathbf{p}}\Big|_{\p=\q} =  J^8\dot{\mathbf{p}}.
\end{split}
\end{equation}
For brevity, we denote the right-hand side of the adaptive dynamics equation by $F(\mathbf{p})$, so that $\dot{\p} = F(\p)$. We have demonstrated that 
\begin{equation}
F(\mathbf{y}) = F(\mathbf{1} - J^8\p) = J^8F(\p) =  - D(\mathbf{1} -J^8\p)F(\p),
\end{equation}
where $D(f)$ is the differential of a mapping $f$. 
Therefore, adaptive dynamics are equivariant with respect to the change $\p\mapsto\mathbf{1} -J^8\p$ coupled with the reversal of time.   
\end{proof}

\subsection{Symmetric-anytisymmetric decomposition}
Even for low-dimensional systems, adaptive dynamics can be very hard to tackle. 
To makes some progress, in the following we represent the vector field as the sum of two parts, which we call symmetric and anti-symmetric. While generally, the two parts affect each other, it is possible to choose the parameters in $\mathbf{f}_N$ to view the former as a perturbation of the latter. 

\subsubsection{Reactive strategy}
We start with a motivating example, considering the dynamics of reactive strategies described in Section \ref{sec: reactive strategy}. 
\begin{figure}
    \centering
    \includegraphics[scale=.55]{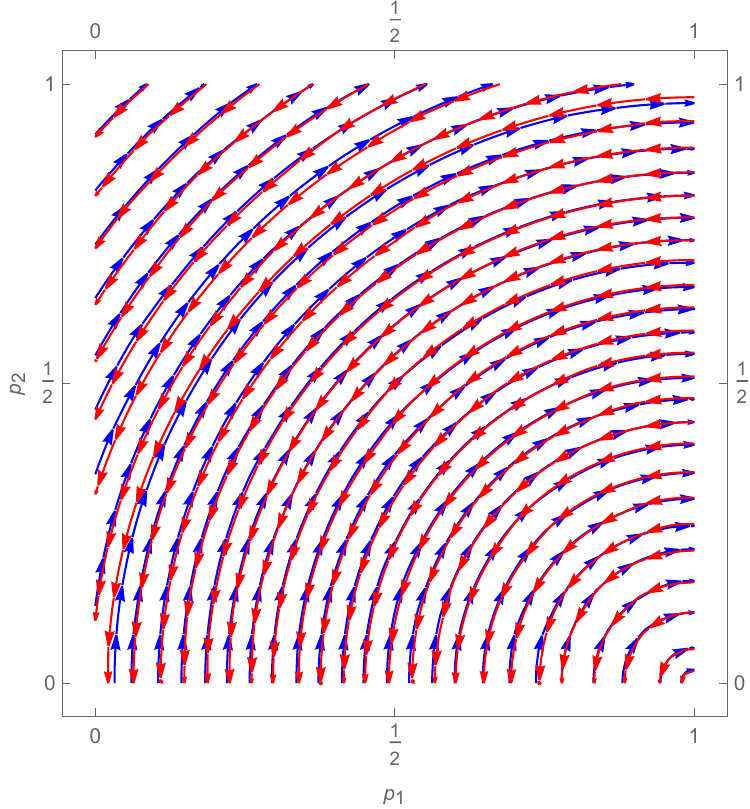}
  \caption{Symmetric and anti-symmetric parts of a reactive strategy. Symmetric part is coloured blue, the anti-symmetric red.}
   \label{fig: sym and antisymm reactive strategy}
\end{figure}
Using the explicit formula \eqref{eq:payff_reactive} for the payoff function $A(p_{1},p_{2},q_{1},q_{2})$, we can write the symmetric and anti-symmetric parts as $$A_a(p_{1},p_{2},q_{1},q_{2}) = \frac12\left(A(p_{1},p_{2},q_{1},q_{2})-A(q_{1},q_{2},p_{1},p_{2})\right)$$ and $$A_s(p_{1},p_{2},q_{1},q_{2}) = \frac12\left(A(p_{1},p_{2},q_{1},q_{2})+A(q_{1},q_{2},p_{1},p_{2})\right), 
$$
respectively. Through these, we obtain:
\begin{equation}
    \label{eq: reactive strategy symmetric anti-symmetric}
    \begin{split}
       A_a &=  \frac{(b+c) \left(p_{2} \left(q_{1}-q_{2}-1\right)-\left(p_{1}-p_{2}\right) q_{2}+q_{2}\right)}{2-2 \left(p_{1}-p_{2}\right) \left(q_{1}-q_{2}\right)},\\
       A_s&=\frac{(b-c) \left(p_{2} \left(q_{1}-q_{2}\right)+\left(p_{1}-p_{2}\right) q_{2}+p_{2}+q_{2}\right)}{2- 2 \left(p_{1}-p_{2}\right) \left(q_{1}-q_{2}\right)}
   \end{split}
\end{equation}
Writing adaptive dynamics for $A_s$ and $A_a$, we get 
\[
\begin{cases}
\dot{p}_1^s = \displaystyle \frac{(b-c) p_{2}}{2 (1-p_{1}+p_{2})^2},\\[0.4cm]
\dot{p}_2^s = \displaystyle  \frac{(b-c) (1-p_{1})}{2 (1-p_{1}+p_{2})^2}
\end{cases}
\]
and 
\[
\begin{cases}
\dot{p}_1^a = \displaystyle \frac{(b+c) p_{2}}{2 \left((p_{1}-p_{2})^2-1\right)},\\[0.4cm]
\dot{p}_2^a = \displaystyle \frac{(b+c) (1-p_{1})}{2 \left((p_{1}-p_{2})^2-1\right)}
\end{cases}
\]
It can be readily checked that both of these vector fields conserve the quantity $(1-p_{1})^2 + p_{2}^2 = C$; however, the flows on the circles go in the opposite direction. We will later observe that this phenomenon persists for memory-2 strategies. 

The adaptive dynamics are plotted in Figure \ref{fig: sym and antisymm reactive strategy}, with symmetric part coloured blue and the anti-symmetric red.
When $c$ is close to 1, the symmetric part may be viewed as a perturbation to the anti-symmetric one. This observation does not yield much information in this case, but a similar one for memory-1 games will be more insightful.

\subsubsection{Memory 1}
\label{sec: decomposition memory 1}
The adaptive dynamics even in the case of memory-1 appear to have a complex structure. However, the dynamics are much easier to study when $b-c=\epsilon\ll 1$, i.e. when mutual cooperation yields a comparably small profit. 
Again, the dynamics can be 'decomposed' into a symmetric and anti-symmetric parts (they are, however, not uncoupled). Then, when the payoff vector $\mathbf{f}^T = \begin{pmatrix}
    b-c&-c&b&0
\end{pmatrix}$, the symmetric and anti-symmetric parts are proportional respectively to $(b-c)$ and $-(b+c)$. 

We know from Theorem \ref{st: decomposition memory N} that the function $A(\p,\q)$ decomposes as the sum of a symmetric and anti-symmteric function, with each differing from the original only in the last column of the determinant in the numerator. Thus, we have 
\begin{small}
\begin{equation}
\begin{split}
    \label{eq: Asym}
A_{s}(\mathbf{p},\mathbf{q})  &:=\frac{\left| 
\begin{array}{cccc}
 p_{CC} q_{CC}-1 & p_{CC}-1 & q_{CC}-1 & f_1 \\
 p_{CD} q_{DC} & p_{CD}-1 & q_{DC} & \frac{f_2 + f_3}{2} \\
 p_{DC} q_{CD} & p_{DC} & q_{CD}-1 & \frac{f_2 + f_3}{2}  \\
 p_{DD} q_{DD} & p_{DD} & q_{DD} & f_4 \\
\end{array}
\right|}{\left| 
\begin{array}{cccc}
 p_{CC} q_{CC}-1 & p_{CC}-1 & q_{CC}-1 & 1 \\
 p_{CD} q_{DC} & p_{CD}-1 & q_{DC} & 1 \\
 p_{DC} q_{CD} & p_{DC} & q_{CD}-1 & 1 \\
 p_{DD} q_{DD} & p_{DD} & q_{DD} & 1 \\
\end{array}
\right|}, \\ \\ \ A_{a}(\mathbf{p},\mathbf{q})  &:=\frac{\left| 
\begin{array}{cccc}
 p_{CC} q_{CC}-1 & p_{CC}-1 & q_{CC}-1 & 0 \\
 p_{CD} q_{DC} & p_{CD}-1 & q_{DC} & \frac{f_2 - f_3}{2} \\
 p_{DC} q_{CD} & p_{DC} & q_{CD}-1 & \frac{f_3 - f_2}{2}  \\
 p_{DD} q_{DD} & p_{DD} & q_{DD} & 0 \\
\end{array}
\right|}{\left| 
\begin{array}{cccc}
 p_{CC} q_{CC}-1 & p_{CC}-1 & q_{CC}-1 & 1 \\
 p_{CD} q_{DC} & p_{CD}-1 & q_{DC} & 1 \\
 p_{DC} q_{CD} & p_{DC} & q_{CD}-1 & 1 \\
 p_{DD} q_{DD} & p_{DD} & q_{DD} & 1 \\
\end{array}
\right|}
\end{split}
\end{equation}
\end{small}

\begin{lemma}
The representation $A(\p,\q) = A_a(\p,\q)+A_s(\p,\q)$ allows us to separate the adaptive dynamics flow into symmetric and anti-symmetric parts. If we define the two parts as 
\[
\dot{\p}_s := \frac{\partial A_s(\p,\q)}{\partial\p}\vert_{\p=\q}, \ \dot{\p}_a:= \frac{\partial A_a(\p,\q)}{\partial\p}\vert_{\p=\q}, 
\]
then $\dot{\p} = \dot{\p}_s(\p_a,\p_s) + \dot{\p}_a(\p_a,\p_s)$.
Additionally, the flow $\dot{\p}_s$ is a gradient one. 
\end{lemma}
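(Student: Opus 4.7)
The plan divides naturally into the two claims. The additive splitting $\dot{\p} = \dot{\p}_s + \dot{\p}_a$ is essentially formal: Theorem \ref{st: decomposition memory N} provides $A(\p,\q) = A_s(\p,\q) + A_a(\p,\q)$, and since the partial-derivative operator and the subsequent restriction to $\p=\q$ are both linear, we obtain
\[
\dot{\p} \;=\; \frac{\partial A(\p,\q)}{\partial\p}\Big|_{\p=\q} \;=\; \frac{\partial A_s(\p,\q)}{\partial\p}\Big|_{\p=\q} + \frac{\partial A_a(\p,\q)}{\partial\p}\Big|_{\p=\q} \;=\; \dot{\p}_s + \dot{\p}_a
\]
directly from the definitions of $\dot{\p}_s$ and $\dot{\p}_a$. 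No further structure of $A_s$ or $A_a$ is needed here.

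For the gradient structure of $\dot{\p}_s$, my plan is to exhibit an explicit potential, namely
\[
V(\p) \;:=\; A_s(\p,\p).
\]
(Note that $A_s(\p,\p)=A(\p,\p)$, since $A_a$ is anti-symmetric and therefore vanishes on the diagonal; this gives a pleasant interpretation of $V$ as the self-play payoff.) Writing $A_s$ as a function of two slots and denoting by $\partial_1$ and $\partial_2$ the gradients with respect to the first and second slot, I differentiate the identity $A_s(u,v)=A_s(v,u)$ slotwise to obtain $\partial_1 A_s(u,v) = \partial_2 A_s(v,u)$. Specializing to $u=v=\p$ gives $\partial_1 A_s(\p,\p) = \partial_2 A_s(\p,\p)$. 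The chain rule applied to $V(\p)=A_s(\p,\p)$ then yields
\[
\nabla V(\p) \;=\; \partial_1 A_s(\p,\p) + \partial_2 A_s(\p,\p) \;=\; 2\,\partial_1 A_s(\p,\p) \;=\; 2\,\dot{\p}_s,
\]
so that $\dot{\p}_s = \tfrac{1}{2}\nabla V$, which is a gradient field as claimed.

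The only mild obstacle is notational rather than mathematical: one must keep track carefully of which slot of $A_s$ is being differentiated, because the definition of $\dot{\p}_s$ restricts to $\p=\q$ only \emph{after} differentiating in the first slot, whereas the chain rule applied to $V$ produces contributions from both slots. No appeal to the explicit determinantal form of $A_s$ is required, and the corresponding claim for $\dot{\p}_a$ fails precisely because the analogous symmetry is replaced by anti-symmetry, which causes the two chain-rule contributions to cancel rather than reinforce.
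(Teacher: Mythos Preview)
Your proof is correct and follows essentially the same approach as the paper. Both arguments establish that $\dot{\p}_s = \tfrac12\nabla A_s(\p,\p)$: the paper does this via difference quotients (inserting and subtracting $A_s(\p,\p+\epsilon)$ and invoking symmetry), whereas you use the slot notation $\partial_1,\partial_2$ and the chain rule directly; the underlying idea---that symmetry of $A_s$ forces the two partial derivatives to coincide on the diagonal---is identical.
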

\begin{proof}
   In the light of Theorem \ref{st:exchange}, we only need to prove the last statement; one can easily ascertain that 
 \begin{equation}
    \begin{split}
        \frac{\partial A_s(\p,\p)}{\partial \p} &= \lim\limits_{\epsilon\to 0}\frac{A_s(\p+\epsilon,\p+\epsilon)-A_s(\p,\p)} {\epsilon} \\&=\lim\limits_{\epsilon\to 0}\frac{A_s(\p+\epsilon,\p+\epsilon)-A_s(\p, \p+\epsilon) + A_s(\p, \p+\epsilon) -A_s(\p,\p)} {\epsilon} \\ &=\lim\limits_{\epsilon\to 0}\frac{A_s(\p+\epsilon,\p+\epsilon)-A_s(\p, \p+\epsilon) + A_s( \p+\epsilon,\p) -A_s(\p,\p)} {\epsilon} \\
        &=\lim\limits_{\epsilon\to 0}\left(\frac{A_s(\p+\epsilon,\p+\epsilon)-A_s(\p, \p+\epsilon)} {\epsilon}\right)+  \frac{\partial A_s(\p,\q)}{\partial \p}\vert_{\p=\q}\\&=\lim\limits_{\epsilon\to 0}\frac{\partial A_s(\p,\q)}{\partial \p}\vert_{\q= \p+\epsilon}  +  \frac{\partial A_s(\p,\q)}{\partial \p}\vert_{\p=\q} = 2\frac{\partial A_s(\p,\q)}{\partial \p}\vert_{\p=\q},
    \end{split}
\end{equation}
making the flow $\dot{\p}_s$ the flow is the gradient of the function $\frac12 A_s(\p,\p)$.
\end{proof}
\begin{remark}
\begin{enumerate}
\item It is important to remember that the flows $\dot{\p}_a$ and $\dot{\p}_s$ are \textbf{not uncoupled}. Therefore, in the decomposition, they affect each other. 
\item Observe that the first three columns of the matrices in the denominators of $A_a(\p,\q)$ and $A_s(\p,\q)$ are identical to each other and to those of $A(\p,\q)$; the differences lie in the last column, which is $\frac12\left(\mathbf{f} + J_1\mathbf{f}\right)$ for $A_s$ and $\frac12\left(\mathbf{f}-J_1\mathbf{f}\right)$ for $A_a$.   
\end{enumerate}    
\end{remark}

\noindent
The anti-symmetric part of the flow $\dot{\p}_a$ has some interesting properties. On its own, it has the explicit form
\begin{footnotesize}
\begin{equation}
    \label{eq:antisym part}
    \begin{cases}
        \dot{p}_{CC} = \frac{1}{A}p_{DD} (f_2-f_3) (-p_{DD} (p_{CD}+p_{DC})+2 p_{CD} p_{DC}+p_{DD})\\
        \dot{p}_{CD}= -\frac{1}{A}(p_{CC}-1) p_{DD} (f_2-f_3) (p_{CC}-p_{DD}+1)\\
        \dot{p}_{DC}= -\frac{1}{A}(p_{CC}-1) p_{DD} (f_2-f_3) (p_{CC}-p_{DD}+1)\\
        \dot{p}_{DD}=\frac{1}{A}(p_{CC}-1) (f_2-f_3) (p_{CC} (p_{CD}+p_{DC}-1)-2 p_{CD} p_{DC}+p_{CD}+p_{DC}-1)
    \end{cases}
\end{equation}
\end{footnotesize}
where
\begin{footnotesize}
\begin{equation}
\begin{split}
  A &=   2 (p_{CD}-p_{DC}-1) \left(2 p_{DD} \bigl(p_{CC}^2-p_{CD} p_{DC}-1\right)+p_{DD}^2 (-2 p_{CC}+p_{CD}+p_{DC}+1)\\&-(p_{CC}-1) (p_{CC} (p_{CD}+p_{DC}-1)-2 p_{CD} p_{DC}+p_{CD}+p_{DC}-1)\bigr)
    \end{split}
\end{equation}
    \end{footnotesize}
\begin{lemma}
\label{st: invariants antisymm memory 1}
    The flow (\ref{eq:antisym part}) has the following properties: 
    \begin{enumerate}
    \item It has three conserved quantities: 
    \begin{enumerate}
          \item $G_1 = p_{CD}-p_{DC}$,
            \item $G_2 = \frac{1}{3} \left(-p_{CC}^3+3 p_{CC}-3 p_{CD} p_{DC}^2+p_{DC}^3-p_{DD}^3\right)$,
           \item $ G_3 = (1-p_{CC})^2+p_{CD}^2+(1-p_{DC})^2+p_{DD}^2$;
    \end{enumerate}
   
    \item it is bounded;
    \item its trajectories leave  the $(0,1)^4$ cube. 
      \end{enumerate}
\end{lemma}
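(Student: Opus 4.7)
The plan is to address the three items separately. For item (1), I would verify that each $G_i$ is preserved by a direct computation $\dot G_i = \sum_j (\partial G_i/\partial p_j)\,\dot p_j$ using the explicit flow \eqref{eq:antisym part}. The easiest case is $G_1$: inspection of \eqref{eq:antisym part} shows that the expressions for $\dot p_{CD}$ and $\dot p_{DC}$ are literally identical, so $\dot G_1 = 0$ is immediate. For $G_2$ and $G_3$ the common denominator $A$ factors out, and one is left with showing that a polynomial identity in $(p_{CC}, p_{CD}, p_{DC}, p_{DD})$ holds; this is routine algebra, best handled in a computer algebra system.

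For item (2), once $G_3$ is conserved, boundedness is automatic: the level set $G_3 = C$ is a $3$-sphere of radius $\sqrt{C}$ centred at $(1,0,1,0)\in\mathbb{R}^4$, since $G_3$ is exactly the squared Euclidean distance from $\p$ to that point. Every trajectory stays on such a sphere and is therefore bounded.

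For item (3), I need to show that trajectories starting in the open cube eventually cross its boundary. The strategy is topological. A regular level set $\Sigma = \{G_1 = c_1,\ G_2 = c_2,\ G_3 = c_3\}$ is generically a smooth $1$-manifold in $\mathbb{R}^4$, and since $\Sigma$ lies on the compact sphere $G_3=c_3$, each connected component is either a point or a circle. The key geometric observation is that the centre of the sphere is the vertex $(1,0,1,0)$ of $[0,1]^4$, so for any $c_3>0$ the sphere protrudes from the cube through the faces $p_{CC}=1$, $p_{CD}=0$, $p_{DC}=1$, $p_{DD}=0$. I would then argue that at least one component of $\Sigma$ passing through a generic interior point must exit $[0,1]^4$; a concrete route is to exhibit an explicit initial condition (for instance $(1/2,1/2,1/2,1/2)$) and track its orbit using the conserved quantities until one of the coordinates leaves $[0,1]$.

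The hard part will be item (3), specifically ruling out that all level-set components through interior points are circles entirely contained in $[0,1]^4$. My fallback plan is to compute, on the sphere $G_3 = c_3$, the restricted gradients of $G_1$ and $G_2$ at a cube-boundary point of the sphere and show that the joint level set necessarily meets that boundary; alternatively, to verify by direct parametrisation of a convenient one-parameter family of initial data that a coordinate (say $p_{DD}$) crosses zero in finite time.
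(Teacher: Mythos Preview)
Your treatment of items (1) and (2) matches the paper's exactly: direct verification of $\langle\nabla G_i,\dot\p_a\rangle=0$ for the invariants, and boundedness from the spherical level sets of $G_3$.

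For item (3), however, you are working much harder than necessary, and the ``hard part'' you flag is precisely the gap. The paper's argument is a one-line monotonicity observation: from the explicit form \eqref{eq:antisym part} one checks that $\dot p_{CC}<0$ everywhere in the open cube $(0,1)^4$. (Concretely, the numerator factor $p_{DD}\bigl(1-p_{CD}-p_{DC}\bigr)+2p_{CD}p_{DC}$ rewrites as $(1-p_{CD})(1-p_{DC})p_{DD}+p_{CD}p_{DC}(2-p_{DD})$, which is strictly positive in the interior; together with $f_2-f_3<0$ and the sign of $A$ this gives the claim.) A strictly decreasing coordinate along the flow immediately rules out periodic orbits and forces every interior trajectory toward the boundary; no analysis of the topology of the joint level sets $\{G_1=c_1,G_2=c_2,G_3=c_3\}$ is needed.

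Your topological plan is not wrong in spirit, but it does not close on its own: the worry you raise---that a component of the $1$-dimensional level set could be a circle lying entirely inside $[0,1]^4$---cannot be excluded just from the fact that the sphere $G_3=c_3$ is centred at a vertex of the cube and protrudes through some faces. That is a statement about the sphere, not about the curve cut out by the two further constraints $G_1,G_2$. The monotonicity of $p_{CC}$ is exactly the missing ingredient that kills such closed components, so the cleanest fix to your argument is simply to replace the level-set geometry with the sign check on $\dot p_{CC}$.
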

\begin{proof}
    The statement (1) is proved by explicit computation. One can check that for all $i$ the expression $\left<\nabla G_i, \dot{\p}_a\right> =0$.
    
    Next, (2) follows from (1), since the trajectories necessarily lie on the surface of a four-dimensional sphere. Statement (3) is supported by the fact that $\dot{p}_{CC}<0$ for all values of $p_{CC},p_{CD},p_{DC},p_{DD}$ in the interior of the cube. 
\end{proof}

\noindent
Out of the three conserved quantities, the first and the third are of most interest. The vector field trajectories lie within (but do not coincide, due to singularities!) the intersections of the common level sets of $G_1$, $G_2$ and $G_3$. 
The function $G_1$ describes the difference between $p_{CD}$ and $p_{DC}$; the fact that any level set $p_{CD}-p_{DC} = C$ is conserved (as opposed to $p_{CD}=p_{DC}$ only) is due to the simplified version of the payoff vector. 
Conservation of $G_3$ means that the trajectories lie at a constant distance to the TFT (tit-for-tat) strategy: its description in terms of the probability vector  $\p$ is $(1,0,1,0)$.

\subsubsection{Perturbation dynamics}
Consider the special case of a donation game (with equal gains from switching), such that  $f_1 = b-c, f_2 = -c, f_3 = b, f_4=0$. 
This entails
\begin{small}
\begin{equation}
    \label{eq: Asym for concrete case}
    \begin{split}
A_{s}(\mathbf{p},\mathbf{q})  &=\frac{\left| 
\begin{array}{cccc}
 p_{CC} q_{CC}-1 & p_{CC}-1 & q_{CC}-1 & b-c \\
 p_{CD} q_{DC} & p_{CD}-1 & q_{DC} & \frac{b-c}{2} \\
 p_{DC} q_{CD} & p_{DC} & q_{CD}-1 & \frac{b-c}{2}  \\
 p_{DD} q_{DD} & p_{DD} & q_{DD} & 0 \\
\end{array}
\right|}{\left| 
\begin{array}{cccc}
 p_{CC} q_{CC}-1 & p_{CC}-1 & q_{CC}-1 & 1 \\
 p_{CD} q_{DC} & p_{CD}-1 & q_{DC} & 1 \\
 p_{DC} q_{CD} & p_{DC} & q_{CD}-1 & 1 \\
 p_{DD} q_{DD} & p_{DD} & q_{DD} & 1 \\
\end{array}
\right|},\\ \\ \ A_{a}(\mathbf{p},\mathbf{q})  &=\frac{\left| 
\begin{array}{cccc}
 p_{CC} q_{CC}-1 & p_{CC}-1 & q_{CC}-1 & 0 \\
 p_{CD} q_{DC} & p_{CD}-1 & q_{DC} & -\frac{c+b}{2} \\
 p_{DC} q_{CD} & p_{DC} & q_{CD}-1 & \frac{c+b}{2}  \\
 p_{DD} q_{DD} & p_{DD} & q_{DD} & 0 \\
\end{array}
\right|}{\left| 
\begin{array}{cccc}
 p_{CC} q_{CC}-1 & p_{CC}-1 & q_{CC}-1 & 1 \\
 p_{CD} q_{DC} & p_{CD}-1 & q_{DC} & 1 \\
 p_{DC} q_{CD} & p_{DC} & q_{CD}-1 & 1 \\
 p_{DD} q_{DD} & p_{DD} & q_{DD} & 1 \\
\end{array}
\right|}
\end{split}
\end{equation}
\end{small}
As noted above,  $A_s(\p,\q)$ is directly proportional to $b-c$. Therefore, we may consider the case where $b-c=\epsilon$, a very small quantity. This assumption allows us to interpret the symmetric part of the dynamics as a perturbation to the anti-symmetric one. 
We will leverage the following fact:
\begin{lemma}[\cite{berglund2001perturbation}]
\label{st: lemma perturbation}
    Consider a vector field $\dot{x} = f + \epsilon g$ for small $\epsilon$ in some open subset $U\subset \mathbb{R}^N$. Assume that $f$ admits a uniform Lipschitz constant $K$ in $U$ and that $||g|| $ is bounded by some constant $M$ in the same subset. Then if $x_{\epsilon}(0)= x_0(0) $, where $x_0(t)$ is the trajectory of the system $\dot{x} = f$, then 
    \begin{equation}
        \label{boudedness}
        ||x_{\epsilon}(t)-x_0(t)||\le \frac{\epsilon M}{K}\left(e^{Kt}-1\right)
    \end{equation}
\end{lemma}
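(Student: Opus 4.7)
The plan is to apply a standard Gronwall-type argument to the difference of the two trajectories. I would first rewrite both $x_\epsilon(t)$ and $x_0(t)$ in integral form via the fundamental theorem of calculus, using the assumption $x_\epsilon(0)=x_0(0)$. Subtracting the two identities and applying the triangle inequality gives
\[
\|x_\epsilon(t)-x_0(t)\| \le \int_0^t \|f(x_\epsilon(s))-f(x_0(s))\|\,ds + \epsilon\int_0^t \|g(x_\epsilon(s))\|\,ds.
\]
The hypothesis that $f$ is $K$-Lipschitz on $U$ and that $\|g\|$ is bounded by $M$ on $U$ then collapses this to the integral inequality $u(t) \le \epsilon M t + K\int_0^t u(s)\,ds$, where $u(t):=\|x_\epsilon(t)-x_0(t)\|$.

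Next I would apply Gronwall's inequality to conclude. Because the ``forcing'' term $\epsilon M t$ is time-dependent rather than constant, the cleanest route is to work with $v(t):=\int_0^t u(s)\,ds$, which satisfies the linear differential inequality $v'(t)-Kv(t)\le \epsilon M t$. Multiplying through by the integrating factor $e^{-Kt}$ and integrating from $0$ to $t$ yields
\[
v(t) \le \frac{\epsilon M}{K^2}\bigl(e^{Kt}-1-Kt\bigr),
\]
and substituting back into $u(t)\le\epsilon M t + K v(t)$ recovers exactly the claimed estimate $u(t)\le \frac{\epsilon M}{K}(e^{Kt}-1)$.

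The only mild subtlety, and it lies in the application of the lemma more than in its proof, is that the Lipschitz bound on $f$ and the sup bound on $g$ are hypotheses on $U$, so the estimate above is valid only as long as both trajectories remain in $U$. This is handled either by restricting attention to the maximal interval $[0,t^\ast)$ on which $x_\epsilon$ and $x_0$ both stay in $U$, or by assuming $U$ is forward-invariant under the flows of $f$ and $f+\epsilon g$. Beyond this, no further work is required: the argument is the classical Gronwall estimate, included here only to expose the explicit constants $M$ and $K$ that will enter the subsequent perturbative analysis of the memory-1 adaptive dynamics.
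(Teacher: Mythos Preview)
Your Gronwall argument is correct and is the standard proof of this estimate. Note that the paper does not actually prove this lemma: it is quoted verbatim from the cited reference \cite{berglund2001perturbation} and used as a black box, so there is no ``paper's own proof'' to compare against. Your remark about the estimate being valid only while both trajectories remain in $U$ is well taken and matches how the paper later applies the lemma, restricting to a compactified subdomain $C_{\epsilon_1}$ to control the constants.
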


\noindent
Our findings will be described for the three-dimensional case of counting strategies (see Section \ref{sec: counting strategy}). The general four-dimensional case can be treated analogously; we provide the corresponding theorem at the end of the section. In the following, we adhere to the notation used in Section \ref{sec: counting strategy} and in \cite{laporte2023adaptive} and denote the symmetric and anti-symmetric components of the counting dynamics as $\dot{\q}_s$ and $\dot{\q}_a$ respectively. 

Under our assumptions, the vector field $\dot{\q}_a$ will take the role of $f$ and $\dot{\q}_s$ will take the role of $\epsilon g$ in the above lemma. However, we encounter the following problem: the denominators of both functions can be zero on the boundary. Hence, we need to multiply everything by the common denominator. 

Firstly, it is obvious that the symmetric part of the dynamics, $\dot{\q}_s$,  multiplied by the denominator, will be a bounded function, since all its modified elements are polynomials. 
The explicit form of the equations is in the Appendix \ref{sec: appendix}, (\ref{eq:anti-symmetric part after multiplication counting}). The only information valuable to us is that all of these equations are polynomials in $q_0,q_{1},q_{2}$.
This immediately entails that $\dot{\q}_a$  is Lipschitz.  This claim follows from the two statements:
\begin{lemma}[Mean value theorem for multivariable functions, \cite{apostol1991calculus}]
    For an open subset $U\subset \mathbb{R}^N$, a differentiable function $f:U\to\mathbb{R}$ and two points $x,y\in U$ the following holds: there exists a $c$ between 0 and 1, such that 
    \[
    |f(y)-f(x)|\le |\nabla f((1-c)x + cy)|||y-x||.
  \]
  \end{lemma}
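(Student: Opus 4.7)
The plan is to reduce the multivariable statement to the classical one-dimensional Mean Value Theorem by restricting $f$ to the line segment joining $x$ and $y$. First I would introduce the auxiliary function $g:[0,1]\to\mathbb{R}$ defined by
\[
g(t) \;=\; f\big((1-t)x + ty\big).
\]
This is well-defined provided the closed segment from $x$ to $y$ lies in $U$, which is the standard implicit hypothesis in this formulation (automatic, for instance, when $U$ is convex, or in the authors' application where $f$ is a polynomial and hence extends to all of $\mathbb{R}^N$).

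Next I would verify via the chain rule that $g$ is differentiable on $(0,1)$ with
\[
g'(t) \;=\; \nabla f\big((1-t)x+ty\big)\cdot(y-x),
\]
and that $g$ is continuous on $[0,1]$. Applying the classical one-variable Mean Value Theorem to $g$ on $[0,1]$ then produces some $c\in(0,1)$ satisfying $g(1)-g(0) = g'(c)$. Translating back yields the pointwise identity
\[
f(y) - f(x) \;=\; \nabla f\big((1-c)x+cy\big)\cdot(y-x).
\]

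The final step is to take absolute values and invoke the Cauchy--Schwarz inequality on the right-hand inner product, which immediately gives
\[
|f(y) - f(x)| \;\le\; \big|\nabla f\big((1-c)x+cy\big)\big|\,\|y-x\|,
\]
as claimed. The only real subtlety is ensuring that $(1-t)x+ty\in U$ for every $t\in[0,1]$, so that $g$ and $g'$ are defined throughout the interval; for the paper's intended application to polynomial vector fields on $[0,1]^{2^{2N}}$ this is trivial, while for a general open $U$ one restricts attention to convex subdomains (or, more generally, chains the argument along a polygonal path inside $U$ and sums the resulting telescoping estimates).
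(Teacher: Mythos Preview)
Your proof is correct and is exactly the standard textbook argument (restrict to the segment, apply the one-variable Mean Value Theorem, then Cauchy--Schwarz). The paper itself does not give a proof of this lemma at all: it is stated with a citation to Apostol and used as a black box to conclude that polynomial components of a vector field are Lipschitz, so there is nothing to compare against.
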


\begin{remark}
    When all the components of a vecor field are Lipschitz, the vector field is Lipschitz. 
\end{remark}

\noindent
Therefore, both conditions of Lemma \ref{st: lemma perturbation} are satisfied.
However, we are presented with a problem: the skewsymmetric part has zeros on the boundary of the cube.  Specifically, the vector field is 0 when $q_0=1, q_{2}=0$ -- this is one of the edges of the cube. These equilibria are sinks, with two negative and one zero eigenvalues.

The vector field is smooth everywhere, and therefore it would take an infinite time to reach those equilibria - this follows from the existence and uniqueness theorem (see, for example, \cite{arnold2012geometrical}). We overcome this obstacle by narrowing our open set, considering not the entire cube $C$, but the domain $C_{\epsilon_1}$, where we surround the edge $q_{1}=1, q_{2}=0$ by a tube that is $\epsilon$-wide. 
\begin{figure}
    \centering
  \subfigure[Trajectories of the vector field $\dot{\q}_a$ in $(0,1)^3$ ]{\includegraphics[scale =.5]{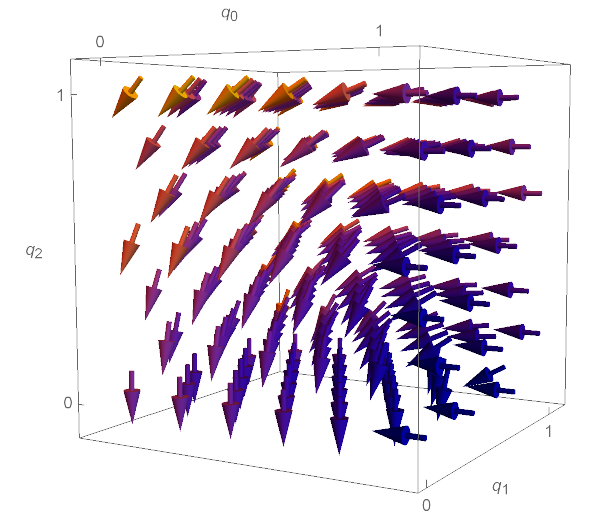}}
  \subfigure[Trajectories of the vector field $\dot{\q}_s$ in $(0,1)^3$]{\includegraphics[scale=.5]{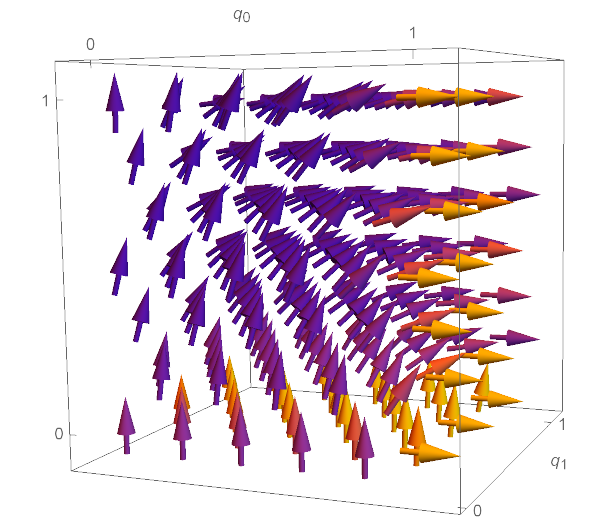}}
  \caption{Symmetric and anti-symmetric parts of the adaptive dynamics flow of counting strategy}
     \label{fig: antisym part}
\end{figure}
It can be checked  computationally that $\dot{q}_0<0$ when $0\le q_0<1, 0\le q_{1}\le1$ and $0<q_{2}\le1$. The same could be said about $\dot{q}_2$. Therefore, the vector field of the anti-symmetric part does not have periodic trajectories and it inevitably escapes the cube $C$.  

The time $T$ that it takes to do so is a smooth function of the starting point. For the cube $C$, this escaping time could be infinite due to the equilibria, but it will always be finite for $C_{\epsilon_1}$.  Moreover, if we take the closure $\overline{C_{\epsilon_1}}$ (by compactifying $C_{\epsilon_1}$, one can determine (and regulate) the maximum escape time $T_{max}$.  With these considerations in mind, we can state:

\begin{theorem}
    For any $\epsilon_1$ and for $b-c<\epsilon(\epsilon_1)$ the trajectories of $\q(t)$ will stay infenitesimally close to those of  $\q_a(t)$.
\end{theorem}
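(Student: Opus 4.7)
The plan is to realize the perturbation argument promised by the preceding discussion, applying Lemma~\ref{st: lemma perturbation} on the compact domain $\overline{C_{\epsilon_1}}$ with $f=\dot{\q}_a$ and $\epsilon g=\dot{\q}_s$. The first step is a reduction: since the denominator of both $\dot{\q}_s$ and $\dot{\q}_a$ (the common factor $\vert \widetilde{\mathbf{M}}_1(\p,\p)\, \mathbf{1}\vert$) is a continuous function that vanishes only on the locus of equilibria we have carved away, it is bounded below by some $\delta_0>0$ on the compact set $\overline{C_{\epsilon_1}}$. Consequently, both vector fields are smooth rational functions with uniformly nonzero denominator, hence $C^\infty$ on a neighbourhood of $\overline{C_{\epsilon_1}}$, and in particular admit a uniform Lipschitz constant $K=K(\epsilon_1)$ on that set.

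Next I would exploit the special structure of $A_s$ in the donation game. Inspecting \eqref{eq: Asym for concrete case}, the symmetric numerator is linear in the payoff entries $b-c, (b-c)/2, (b-c)/2, 0$, so $A_s(\p,\q)=(b-c)\,\widehat{A}_s(\p,\q)$ for a function $\widehat{A}_s$ independent of $b,c$. Differentiating and restricting to the counting-strategy slice, $\dot{\q}_s = (b-c)\,\widehat{\mathbf{h}}(\q)$ with $\widehat{\mathbf{h}}$ smooth on $\overline{C_{\epsilon_1}}$ and therefore bounded by some $M=M(\epsilon_1)$. Thus $\Vert \dot{\q}_s\Vert \le (b-c)M$ uniformly on $\overline{C_{\epsilon_1}}$, which is exactly the hypothesis of Lemma~\ref{st: lemma perturbation} with small parameter $\epsilon=b-c$.

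The third step is to control the escape time. The observation that $\dot q_0<0$ on the interior (and symmetrically for $\dot q_2$) combined with continuity of $\dot{\q}_a$ on the compact $\overline{C_{\epsilon_1}}$ yields a uniform upper bound $\max_{\overline{C_{\epsilon_1}}}\dot q_0 =: -\eta<0$ after staying away from the excised tube; monotone decrease then forces every trajectory of $\dot{\q}_a$ starting in $\overline{C_{\epsilon_1}}$ to leave through the face $q_0=0$ (or enter the tube) in time at most $T_{\max}(\epsilon_1):=1/\eta$. Lemma~\ref{st: lemma perturbation} then gives, for the common initial point $\q(0)=\q_a(0)$ and while both trajectories remain in $\overline{C_{\epsilon_1}}$,
\begin{equation*}
\Vert \q(t)-\q_a(t)\Vert \;\le\; \frac{(b-c)\,M}{K}\bigl(e^{Kt}-1\bigr)\;\le\;\frac{(b-c)\,M}{K}\bigl(e^{K T_{\max}}-1\bigr).
\end{equation*}
Given any target closeness $\delta>0$, defining $\epsilon(\epsilon_1):=\delta K/\bigl(M(e^{KT_{\max}}-1)\bigr)$ yields the claim.

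The main obstacle I anticipate is the bootstrap needed to ensure the perturbed trajectory $\q(t)$ itself does not leave $\overline{C_{\epsilon_1}}$ before $\q_a(t)$ does, so that the Lipschitz and boundedness estimates remain valid over the whole comparison interval. The standard remedy is to work on a slightly fattened set $\overline{C_{\epsilon_1/2}}$, apply the above estimate until the first exit time $T^\star$ of either trajectory from $\overline{C_{\epsilon_1/2}}$, and choose $\epsilon(\epsilon_1)$ small enough that the bound $(b-c)M(e^{KT_{\max}}-1)/K$ is strictly less than $\epsilon_1/2$; by continuity this forces $T^\star\ge T_{\max}$, closing the argument. The remaining book-keeping—uniform lower bound on the denominator on $\overline{C_{\epsilon_1/2}}$, explicit evaluation of $K$, $M$, and $\eta$—is routine once the geometric picture near the equilibrium edge $\{q_0=1,\,q_2=0\}$ is pinned down.
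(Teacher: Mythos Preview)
Your overall strategy---apply Lemma~\ref{st: lemma perturbation} on the compact domain $\overline{C_{\epsilon_1}}$, using the finite escape time $T_{\max}$ to bound the exponential factor---is exactly the paper's approach, and your bootstrap via the fattened domain $\overline{C_{\epsilon_1/2}}$ is a detail the paper does not make explicit.

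There is one genuine discrepancy. The paper does \emph{not} argue that the common denominator $\begin{vmatrix}\widetilde{\mathbf{M}}_1(\p,\p)&\mathbf{1}\end{vmatrix}$ is bounded away from zero on $\overline{C_{\epsilon_1}}$; it only asserts positivity on the \emph{open} cube, and explicitly warns that the denominator can vanish on the boundary. Your assertion that it ``vanishes only on the locus of equilibria we have carved away'' conflates two different objects: the excised edge $\{q_0=1,\,q_2=0\}$ consists of equilibria of the \emph{polynomial} vector field obtained after clearing the denominator, not (a priori) of zeros of the denominator itself. If the denominator vanishes elsewhere on $\partial C$, your Lipschitz and boundedness estimates for the rational fields fail on $\overline{C_{\epsilon_1}}$.

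The paper sidesteps this by multiplying both $\dot{\q}_a$ and $\dot{\q}_s$ by the common denominator \emph{first}, obtaining polynomial vector fields (see~\eqref{eq:anti-symmetric part after multiplication counting}). Polynomials are automatically bounded and Lipschitz on any compact set, so no lower bound on the denominator is needed. This is a time-reparametrisation that leaves trajectories (as unparametrised curves) unchanged wherever the denominator is positive, which suffices for the conclusion. Replacing your first paragraph with this reduction brings your argument in line with the paper's and closes the gap.
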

\noindent
In particular, the following holds:
\begin{corollary}
    \begin{enumerate}
\item $\q(t)$ will leave $C_{\epsilon_1, \epsilon}$ in finite time.
\item As a consequence of the previous statement, the vector field $\dot{\q}$ will not have periodic or quasiperiodic trajectories in $C_{\epsilon_1}$;
    \end{enumerate}    
\end{corollary}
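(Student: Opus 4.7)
The plan is to transfer the escape property of the anti-symmetric flow $\dot{\q}_a$ to the perturbed flow $\dot{\q} = \dot{\q}_a + \dot{\q}_s$ using the previous theorem together with Lemma~\ref{st: lemma perturbation}. The groundwork is already in place in the text: on the compact set $\overline{C_{\epsilon_1}}$ the field $\dot{\q}_a$ is smooth, it has no equilibria (the only equilibria lie on the edge $q_0=1,\, q_2=0$, which has been excised by a tube of width $\epsilon_1$), and it satisfies $\dot{q}_0<0$ and $\dot{q}_2<0$ throughout. By compactness, every anti-symmetric trajectory starting in $\overline{C_{\epsilon_1}}$ exits in a time bounded uniformly by some $T_{\max} = T_{\max}(\epsilon_1) < \infty$.

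For statement~(1), I would fix an initial condition $\q(0) = \q_a(0) \in C_{\epsilon_1}$ and view the full flow as the perturbation $\dot{\q} = \dot{\q}_a + \epsilon\, g$ with $g := \dot{\q}_s/\epsilon$, which is polynomial in the counting coordinates once one clears the common denominator (cf.~equation~(\ref{eq:anti-symmetric part after multiplication counting}) in the appendix). On $\overline{C_{\epsilon_1}}$ this denominator is bounded away from zero, so $\dot{\q}_a$ admits a Lipschitz constant $K$ and $\|g\|$ a uniform bound $M$. Lemma~\ref{st: lemma perturbation} then gives
\begin{equation}
\|\q(t) - \q_a(t)\| \;\le\; \frac{\epsilon M}{K}\bigl(e^{K T_{\max}} - 1\bigr), \qquad t \in [0, T_{\max}].
\end{equation}
Since $\q_a$ has left $\overline{C_{\epsilon_1}}$ by time $T_{\max}$, a compactness argument yields a uniform distance $d > 0$ between $\q_a(T_{\max})$ and $\overline{C_{\epsilon_1}}$. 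Choosing $\epsilon(\epsilon_1)$ so small that $\tfrac{\epsilon M}{K}(e^{KT_{\max}}-1) < d$ then forces $\q(T_{\max})$ to lie outside $C_{\epsilon_1}$, establishing~(1).

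Statement~(2) follows by a standard recurrence argument. Any periodic orbit of $\dot{\q}$ is compact and forward-invariant; if it were contained in $C_{\epsilon_1}$, it would remain there for all $t\ge 0$, contradicting~(1). The closure of a quasiperiodic trajectory is likewise compact (diffeomorphic to a torus) and forward-invariant, so exactly the same reasoning excludes it from sitting inside $C_{\epsilon_1}$.

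The main obstacle will be verifying that the uniform escape time $T_{\max}(\epsilon_1)$ really is finite. One must show that $|\dot{q}_0|$ and $|\dot{q}_2|$ remain bounded away from zero on $\overline{C_{\epsilon_1}}$, so that $\q_a$ cannot asymptotically slow down as it approaches the excised tube around the equilibrium edge. This is a quantitative strengthening of the strict-sign statements $\dot{q}_0<0$, $\dot{q}_2<0$ already noted in the text, obtainable by continuity on the compact domain, and together with the tube width it dictates how small $\epsilon(\epsilon_1)$ must ultimately be chosen.
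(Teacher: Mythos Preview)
Your proposal is correct and follows essentially the same route as the paper. The paper does not actually spell out a proof of this Corollary: it is stated as an immediate consequence of the preceding Theorem (``trajectories of $\q(t)$ stay infinitesimally close to those of $\q_a(t)$''), which in turn rests on Lemma~\ref{st: lemma perturbation} together with the uniform escape time $T_{\max}$ of the anti-symmetric flow on $\overline{C_{\epsilon_1}}$; your argument reconstructs precisely this chain of reasoning and, in the final paragraph, even isolates the one point the paper glosses over---the quantitative lower bound on $|\dot q_0|$, $|\dot q_2|$ needed to guarantee $T_{\max}(\epsilon_1)<\infty$.
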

\noindent
The dynamics of the anti-symmetric part are depicted in Figure \ref{fig: antisym part}(a).

\subsubsection{Memory $N$}
We can generalize the previous results as follows. 
\begin{theorem}
\label{st: decomposition memory N dynamics}
    For arbitrary memory $N$, the system admits the decomposition into  'symmetric' and 'skew-symmetric' dynamics.  Namely, 
    \begin{equation*}
        \dot{\p}  = \dot{\p}_a + \dot{\p}_s,
    \end{equation*}
    where 
    \begin{equation*}
        \dot{\p}_a = \frac{\partial A_a(\p,\q)}{\partial \p}\vert_{\p=\q}, \ \ \dot{\p}_s = \frac{\partial A_s(\p,\q)}{\partial \p}\vert_{\p=\q}
    \end{equation*}
   with $A_a(\p,\q)$ and $A_s(\p,\q)$ defined in Theorem \ref{st: decomposition memory N}.
Additionally, the flow $\dot{p}_s$ is a gradient flow of the function $\frac12 A_s(\p,\p)$. 
\end{theorem}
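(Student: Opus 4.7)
The plan is to establish both claims by exploiting the algebraic decomposition $A = A_s + A_a$ from Theorem \ref{st: decomposition memory N} together with the symmetry property $A_s(\p,\q) = A_s(\q,\p)$. Neither part requires new computation beyond the memory-$1$ argument already carried out in Section \ref{sec: decomposition memory 1}, so the proof is essentially a matter of checking that the same reasoning goes through unchanged in arbitrary dimension.

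First I would derive the decomposition of the vector field. Theorem \ref{st: decomposition memory N} gives the pointwise equality $A(\p,\q) = A_s(\p,\q) + A_a(\p,\q)$ of rational scalar functions on the strategy cube. Applying the linear operator $\partial/\partial\p$ term by term and then evaluating at $\p=\q$ distributes across the sum and yields $\dot{\p} = \dot{\p}_s + \dot{\p}_a$ directly, with $\dot{\p}_s$ and $\dot{\p}_a$ as defined in the statement. No further input is required here beyond Theorem \ref{st: decomposition memory N}.

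For the gradient-flow claim I would mirror the chain-rule computation from the memory-$1$ case. Setting $H(\p) := \tfrac{1}{2} A_s(\p,\p)$ and differentiating along the diagonal gives
\begin{equation*}
\nabla H(\p) \;=\; \tfrac{1}{2}\left(\frac{\partial A_s(\p,\q)}{\partial \p}\Big|_{\q=\p} \;+\; \frac{\partial A_s(\p,\q)}{\partial \q}\Big|_{\q=\p}\right).
\end{equation*}
The symmetry $A_s(\p,\q)=A_s(\q,\p)$ means that differentiating $A_s$ in its second slot and evaluating on the diagonal equals differentiating in the first slot and evaluating on the diagonal, because one can first relabel $\p\leftrightarrow\q$ inside $A_s$ without changing its value and then differentiate. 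Substituting this equality collapses the right-hand side to a single term equal to $\dot{\p}_s$, giving $\dot{\p}_s = \nabla H(\p)$ as claimed.

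The only subtlety — and really the only place where one has to be careful — is to distinguish the partial derivative "with respect to the first slot at fixed second slot" (which defines $\dot{\p}_s$) from the total derivative "along the diagonal" (which appears in $\nabla H$); the factor $\tfrac{1}{2}$ in the definition of $H$ is precisely what compensates for the doubling caused by the two symmetric contributions on the diagonal. Regularity is not an obstacle either: $A_s$ is a rational function whose denominator $|\widetilde{\mathbf{M}}_N(\p,\q)\ \mathbf{1}_N|$ is nonzero on the interior of $(0,1)^{2^{2N}}$ (since there the Markov chain has a unique invariant distribution), so all partial derivatives used in the argument genuinely exist.
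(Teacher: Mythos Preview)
Your proposal is correct and follows essentially the same approach as the paper: the decomposition of the vector field is immediate from linearity once Theorem \ref{st: decomposition memory N} gives $A=A_s+A_a$, and the gradient-flow claim reduces to the identity $\partial_{\p}A_s|_{\p=\q}=\partial_{\q}A_s|_{\p=\q}$ coming from the symmetry of $A_s$. The paper's memory-1 argument (which carries over verbatim to general $N$) phrases this via difference quotients rather than the chain rule, but the content is identical.
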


\begin{remark}
    As in the case for memory 1, this decomposition is \textbf{not uncoupled}. That is, $\dot{\p}_a = \dot{\p}_a(\p_a,\p_s)$ and $\dot{\p}_s = \dot{\p}_s(\p_a,\p_s)$ and the two respective flows influence each other.
\end{remark}

\noindent
These findings may be interesting for the following reason:
\begin{lemma}
    In the donation game, the vector $\mathbf{f}_N + J_N^2\mathbf{f}_N$ is proportional to $b-c$ and $\mathbf{f}_N - J_N^2\mathbf{f}_N$ is proportional to $-(b+c)$. 
\end{lemma}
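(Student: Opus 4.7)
The plan is to prove both statements by induction on $N$, using the recursive construction of $\mathbf{f}_N$ from Remark~\ref{st: remark payoff recursive} and the recursive block form of $J_N^2$ from equation~(\ref{eq: recursive J2}). The underlying arithmetic reason the claim works is that the donation-game payoffs $(R,S,T,P)=(b-c,-c,b,0)$ satisfy $2R=2(b-c)$, $S+T=b-c$, $2P=0$ on the one hand, and $S-T=-(b+c)$, $T-S=b+c$ on the other, so every constant that can arise in one symmetrisation step is a rational multiple of $b-c$ and every constant in the antisymmetrisation step is a rational multiple of $b+c$. The induction essentially just transports this observation up the recursion.

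For the base case $N=1$ I would compute directly: with $\mathbf{f}_1=(b-c,-c,b,0)^T$ and $J_1^2$ as in Lemma~\ref{st: structure conservation memoty 1}, one has $J_1^2\mathbf{f}_1=(b-c,b,-c,0)^T$, hence $\mathbf{f}_1+J_1^2\mathbf{f}_1=(b-c)(2,1,1,0)^T$ and $\mathbf{f}_1-J_1^2\mathbf{f}_1=(b+c)(0,-1,1,0)^T$. For the inductive step, I would use that $J_{N-1}^2$ is a permutation matrix, so $J_{N-1}^2\mathbf{1}_{N-1}=\mathbf{1}_{N-1}$, to derive
\[
J_N^2\mathbf{f}_N=\begin{pmatrix}\tfrac{N-1}{N}J_{N-1}^2\mathbf{f}_{N-1}+\tfrac{R}{N}\mathbf{1}_{N-1}\\\tfrac{N-1}{N}J_{N-1}^2\mathbf{f}_{N-1}+\tfrac{T}{N}\mathbf{1}_{N-1}\\\tfrac{N-1}{N}J_{N-1}^2\mathbf{f}_{N-1}+\tfrac{S}{N}\mathbf{1}_{N-1}\\\tfrac{N-1}{N}J_{N-1}^2\mathbf{f}_{N-1}+\tfrac{P}{N}\mathbf{1}_{N-1}\end{pmatrix},
\]
where the interchange of the $S$- and $T$-blocks reflects the swap of the two middle blocks built into~(\ref{eq: recursive J2}). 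Adding $\mathbf{f}_N$ to this expression produces four blocks each of the form $\tfrac{N-1}{N}(\mathbf{f}_{N-1}+J_{N-1}^2\mathbf{f}_{N-1})+\tfrac{c_i}{N}\mathbf{1}_{N-1}$ with $c_i\in\{2R,\,S+T,\,T+S,\,2P\}$, all multiples of $b-c$; by the inductive hypothesis the first summand is also a multiple of $b-c$, so the combination is. Subtracting gives blocks of the form $\tfrac{N-1}{N}(\mathbf{f}_{N-1}-J_{N-1}^2\mathbf{f}_{N-1})+\tfrac{c_i'}{N}\mathbf{1}_{N-1}$ with $c_i'\in\{0,\,S-T,\,T-S,\,0\}$, each a multiple of $b+c$, and the induction closes in the same way.

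There is no real obstacle here; the only thing to watch is the block bookkeeping, namely that the permutation of the $S$- and $T$-blocks induced by the middle rows of $J_N^2$ produces exactly the symmetric combinations $S+T$ (in the $+$ case) and the antisymmetric combinations $S-T$, $T-S$ (in the $-$ case), without mixing. Once one records this, the proportionality of the resulting constants $\{2R,S+T,2P\}$ to $b-c$ and of $\{S-T,T-S\}$ to $b+c$ is immediate from the donation-game payoff structure, and the proof reduces to the routine induction sketched above.
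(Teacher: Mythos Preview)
Your proof is correct and takes essentially the same approach as the paper: induction on $N$ using the recursive block structure of $J_N^2$ and the recursive construction of $\mathbf{f}_N$. The paper's own proof is a single sentence pointing to exactly these two ingredients, so you have simply written out the details that the authors left implicit.
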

\begin{proof}
    The statement follows by induction from the form of $J_N^2$ and the recursive construction of the vector $\mathbf{f}_N$ as in Lemma \ref{st: recursive payoff vector}. 
\end{proof}


\section{Anti-symmetric dynamics}
\label{sec: antisymm dynamics}
The adaptive dynamics for $A_a(\p,\q)$ look very promising, and we believe it warrants a future in-depth investigation,  In the scope of this work we demonstrate a few basic properties. 

Adaptive dynamics for ant--symmetric games are different from those for games with arbitrary payoff vectors. When improving their strategy in accordance with the adaptive dynamics in the anti-symmetric game, the focal player is not trying to maximise their gains: the strategy is honed towards maximising $A(\p,\q)-A(\q,\p)$, i.e. increasing the gap between payoffs. 
It seems natural that the strategy \textit{tit-for-tat} (TFT) will play a special role in the anti-symmetric dynamics, since it equalises the expected payoffs of the two players \cite{press:PNAS:2012}.  Indeed,
\begin{lemma}
For infinitely repeated games without discounting, TFT has the following property:
\[
A_a(\mathbf{p},TFT)=0
\]
for any strategy $\mathbf{p}$ of the co-player. As a consequence, TFT is always a stationary point of the anti-symmetric dynamics.
\end{lemma}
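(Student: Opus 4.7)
The plan is to prove the pointwise identity $A_a(\mathbf{p},\text{TFT})=0$ first, and then derive stationarity as a free corollary by differentiating.

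First, I would write TFT in the memory-$N$ notation. Using the convention that in the index $i_1 i_2 \ldots i_{2N}$ the co-player's round-$k$ action is $i_{2k}$, TFT corresponds to $p_{i_1\ldots i_{2N}} = 1$ when $i_{2N}=C$ and $p_{i_1\ldots i_{2N}} = 0$ when $i_{2N}=D$. Although TFT only uses the last round's information, it is a well-defined memory-$N$ strategy for every $N$.

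Second, I would establish $A(\mathbf{p},\text{TFT}) = A(\text{TFT},\mathbf{p})$ via a pathwise telescoping argument. Let $(X_t,Y_t)$ denote the actions in round $t$ under $\mathbf{p}$ vs.\ TFT. The defining rule of TFT gives $Y_{t+1} = X_t$ for every $t\ge 1$. In the donation game
\begin{equation*}
u_1(X_t,Y_t) - u_2(X_t,Y_t) \;=\; (b+c)\bigl(\mathbb{1}[Y_t=C] - \mathbb{1}[X_t=C]\bigr) \;=\; (b+c)\bigl(\mathbb{1}[X_{t-1}=C] - \mathbb{1}[X_t=C]\bigr)
\end{equation*}
for $t\ge 2$. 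Summing from $t=2$ to $T$ yields a telescoping sum bounded by $|b+c|$, uniformly in $T$. Dividing by $T$ and letting $T\to\infty$ gives $A(\mathbf{p},\text{TFT}) - A(\text{TFT},\mathbf{p})=0$, so $A_a(\mathbf{p},\text{TFT})=\tfrac{1}{2}\bigl(A(\mathbf{p},\text{TFT})-A(\text{TFT},\mathbf{p})\bigr)=0$.

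Third, the stationarity claim follows automatically. Since $A_a(\mathbf{p},\text{TFT})\equiv 0$ as a function of $\mathbf{p}$, we have
\begin{equation*}
\frac{\partial A_a(\mathbf{p},\mathbf{q})}{\partial \mathbf{p}}\bigg|_{\mathbf{q}=\text{TFT}} = 0
\end{equation*}
for \emph{every} $\mathbf{p}$; specialising to $\mathbf{p}=\mathbf{q}=\text{TFT}$ gives $\dot{\mathbf{p}}_a=0$ at TFT.

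The one subtlety I anticipate is that TFT has entries in $\{0,1\}$, so the transition matrix $\mathbf{M}_N(\mathbf{p},\text{TFT})$ may fail to have a unique invariant distribution, and the invariant-distribution formulation of $A$ from \eqref{eq: payoff function initial} needs care at the boundary. The pathwise telescoping argument above bypasses this, since it relies only on the Cesaro average of per-round payoffs, which is well defined without discounting. Equivalently, one could establish the identity on the open cube $(0,1)^{2^{2N}}$ by approximating TFT with fully mixed strategies $\text{TFT}_\varepsilon$ (replacing $1$ and $0$ by $1-\varepsilon$ and $\varepsilon$), apply the Press--Dyson formula of Lemma~\ref{st: payoff function} where the invariant distribution is unique, and pass to the limit $\varepsilon\to 0$ by continuity of the determinant ratios.
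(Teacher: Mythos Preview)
Your proof is correct and follows the same logical skeleton as the paper: show $A(\mathbf{p},\text{TFT})=A(\text{TFT},\mathbf{p})$, deduce $A_a(\mathbf{p},\text{TFT})=0$, then differentiate to get stationarity. The paper simply asserts the equalizer property of TFT (citing Press--Dyson) and omits any discussion of the boundary; you go further by supplying a self-contained telescoping proof of the equalizer identity and by flagging the non-uniqueness of the invariant distribution at the boundary, offering both a pathwise workaround and an $\varepsilon$-approximation argument. One minor remark: your telescoping computation is tailored to the donation game, whereas the equalizer property $A(\mathbf{p},\text{TFT})=A(\text{TFT},\mathbf{p})$ actually holds for any symmetric stage game (since $Y_{t+1}=X_t$ forces the long-run frequencies of $(C,D)$ and $(D,C)$ to coincide); but in the context of this paper, restricting to the donation game is harmless.
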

\begin{proof}
By definition of $A_a$, we have
\[
A_a(\mathbf{p},TFT)=\frac{A(\mathbf{p},TFT)-A(TFT,\mathbf{p})}{2}=0,
\]
since $A(\mathbf{p},TFT)=A(TFT,\mathbf{p})$. As a consequence,
\[
\frac{\partial}{\partial\mathbf{p}} A_a(\mathbf{p}, TFT)\vert_{\mathbf{p}=TFT}=0,
\]
which implies TFT is a stationary point of the anti-symmetric dynamics.
\end{proof}

\begin{remark}
    The trajectories of a memory-$N$ anti-symmetric games are not equidistant from TFT: it can be numerically checked that this property fails at $N=2$. 
\end{remark}

The work \cite{laporte2023adaptive} points out that the general adaptive dynamics within $(0,1)^4$ cube can be simplified through re-parameterising the time via multiplication by the determinant. We generalise this statement and demonstrate that the anti-symmetric dynamics have an even simpler form. 
\begin{lemma}
    The denominator 
    \[
    \begin{vmatrix}
        \widetilde{\mathbf{M}}_N(\p,\q) &\mathbf{1}_N
    \end{vmatrix}
    \]
    is a strictly positive function inside $C_N:=(0,1)^{2^{2N}}$ for any $N$. 
\end{lemma}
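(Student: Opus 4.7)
The plan is to reduce $|\widetilde{\mathbf{M}}_N(\p,\q)\ \mathbf{1}_N|$ to a sum of principal minors of $\mathbf{M}_N - I$ and then interpret those minors via the Matrix Tree Theorem for Markov chains. Since $\mathbf{M}_N(\p,\q)$ is row-stochastic, the all-ones vector $\mathbf{1}_N$ lies in the right kernel of $\mathbf{M}_N - I$; this is the key structural fact I will exploit throughout.

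First, I would Laplace-expand the determinant along its last column, which is $\mathbf{1}_N$. Writing $n = 2^{2N}$ and $L = \mathbf{M}_N - I$, and noting that the first $n-1$ columns of $\widetilde{\mathbf{M}}_N(\p,\q)$ differ from the first $n-1$ columns of $L$ only by elementary column operations that preserve the determinant, each term of the expansion equals $(-1)^{i+n}\det(L^{(i,n)})$, where $L^{(i,n)}$ is obtained by deleting row $i$ and column $n$ of $L$. Using $L\cdot \mathrm{adj}(L) = \det(L)\,I = 0$ together with $\ker L = \mathbb{R}\cdot\mathbf{1}_N$ (which holds once the chain is irreducible, verified below), every column of $\mathrm{adj}(L)$ must be a scalar multiple of $\mathbf{1}_N$. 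Translated into cofactors this yields $(-1)^{i+n}\det(L^{(i,n)}) = \det(L^{(n,n)})$ and, more generally, $(-1)^{i+j}\det(L^{(i,j)}) = \det(L^{(j,j)})$ for all~$i,j$. Summing over $i$, the expansion collapses to
\begin{equation*}
  |\widetilde{\mathbf{M}}_N(\p,\q)\ \mathbf{1}_N|
  \;=\;\sum_{i=1}^{n}\det(L^{(i,i)})
  \;=\;(-1)^{n-1}\sum_{i=1}^{n}\det\!\big((I-\mathbf{M}_N)^{(i,i)}\big).
\end{equation*}

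Second, I would invoke the Markov chain tree theorem: for a row-stochastic $M$,
\begin{equation*}
  \det\!\big((I-M)^{(i,i)}\big) \;=\; \tau_i \;:=\; \sum_{T\in\mathcal{T}_i}\ \prod_{(j\to k)\in T} M_{jk},
\end{equation*}
where $\mathcal{T}_i$ is the set of spanning arborescences (in-trees) of the chain directed towards~$i$. When $\p,\q\in(0,1)^{2^{2N}}$, each one-round transition probability entering $\mathbf{M}_N$ factorises as $p_j q_{\bar j}$, $p_j(1-q_{\bar j})$, $(1-p_j)q_{\bar j}$, or $(1-p_j)(1-q_{\bar j})$, and is therefore strictly positive. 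Moreover, from any memory-$N$ state one can reach any other in at most $N$ rounds with positive probability (by choosing the appropriate sequence of actions), so the induced chain is irreducible; hence at least one spanning arborescence rooted at each~$i$ exists and carries positive weight, giving $\tau_i>0$. Summing, the quantity $\sum_i\tau_i$ is strictly positive throughout~$C_N$, which yields the desired nonvanishing with a definite sign.

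The main obstacle I anticipate is purely bookkeeping: reconciling the various sign conventions (the cofactor sign $(-1)^{i+n}$, the factor $(-1)^{n-1}$ from passing between $(n-1)\times(n-1)$ minors of $\mathbf{M}_N - I$ and of $I-\mathbf{M}_N$, and the column operations turning the natural $\mathbf{M}_N-I$ matrix into the form actually written in Lemma~\ref{st: payoff function}), and verifying carefully that $\ker(\mathbf{M}_N-I)$ is one-dimensional so that the adjugate-kernel argument applies. Once these signs are pinned down, irreducibility of the memory-$N$ chain on $(0,1)^{2^{2N}}$ is immediate from the recursive structure of $\mathbf{M}_N$ in Lemma~\ref{lem: construction matrix}, and the strict-positivity conclusion follows from the positivity of each $\tau_i$.
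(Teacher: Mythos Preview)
Your argument is correct in spirit and considerably more explicit than the paper's. The paper dispatches the lemma in one sentence: it identifies $|\widetilde{\mathbf{M}}_N(\p,\q)\ \mathbf{1}_N|$ with (a scalar multiple of) $\langle\nu,\mathbf{1}_N\rangle$ via the Press--Dyson cofactor relation, and then appeals to the Perron--Frobenius/Markov theorem to conclude that the stationary distribution~$\nu$ has strictly positive entries on~$C_N$. Your route through the Matrix Tree Theorem is the natural way to make this precise: it explains \emph{why} the unnormalised eigenvector (the cofactor vector) is entrywise positive, rather than just citing it, and it forces you to track the sign, which the paper glosses over.

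Two small points. First, the cofactor identity you write is slightly off: from ``columns of $\mathrm{adj}(L)$ lie in $\ker L=\mathbb{R}\mathbf{1}_N$'' you get that $C_{i,j}$ is constant in its \emph{second} index, so $C_{i,n}=C_{i,i}$ (not $C_{i,n}=C_{n,n}$). Fortunately your next line, $\sum_i \det(L^{(i,i)})$, is exactly what this corrected identity gives, so the slip is cosmetic. Second, and more interesting: once you do pin down the sign you find $|\widetilde{\mathbf{M}}_N\ \mathbf{1}_N|=(-1)^{n-1}\sum_i\tau_i$ with $n=2^{2N}$ even, so the determinant is strictly \emph{negative}, not positive. (A direct check with $\p=\q=(1/2,\ldots,1/2)$ in the memory-$1$ case gives $-1$.) This does not affect any downstream argument in the paper --- all that is used is that the denominator has a fixed sign on $C_N$ --- but it is worth noting that your more careful bookkeeping catches a sign inaccuracy in the lemma's statement that the paper's one-line proof does not.
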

\begin{proof}
   This is a direct corollary of the Markov theorem \cite{grimmett2020probability}: the determinant $\begin{vmatrix}
        \widetilde{\mathbf{M}}_N(\p,\q) &\mathbf{1}_N
    \end{vmatrix}$ is the scalar product of the eigenvector $\nu$ of $\mathbf{M}_N(\p,\q)$ and the vector $\mathbf{1}_N$; Markov theorem stipulates that all the entries of the stationary distribution $\nu$ are positive. 
\end{proof}
\begin{lemma}
    The vector field $\dot{\p}_a$ has the following form:
    \begin{equation}
   \dot{\p}_a
 = \frac{1}{2\begin{vmatrix}\widetilde{\mathbf{M}}_N(\mathbf{p},\mathbf{p}) &\mathbf{1}_N\end{vmatrix}}
 \left.\frac{\partial\begin{vmatrix}\widetilde{\mathbf{M}}_N(\mathbf{p},\mathbf{q}) & \mathbf{f}_N-J_N^2\mathbf{f}_N \end{vmatrix}}{\partial \p_a}\right|_{\p=\q}
    \end{equation}
\end{lemma}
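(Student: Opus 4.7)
The plan is to apply the quotient rule to the explicit expression for $A_a(\p,\q)$ given in Theorem~\ref{st: decomposition memory N}, and exploit the anti-symmetry of the numerator to make one of the two terms vanish on the diagonal $\p=\q$.

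Concretely, I would first set
\[
D_a(\p,\q) := \begin{vmatrix}\widetilde{\mathbf{M}}_N(\mathbf{p},\mathbf{q}) & \mathbf{f}_N-J_N^2\mathbf{f}_N\end{vmatrix}, \qquad D_1(\p,\q) := \begin{vmatrix}\widetilde{\mathbf{M}}_N(\mathbf{p},\mathbf{q}) & \mathbf{1}_N\end{vmatrix},
\]
so that by Theorem~\ref{st: decomposition memory N}, $A_a(\p,\q) = \tfrac{1}{2}D_a(\p,\q)/D_1(\p,\q)$. The quotient rule then gives
\[
\frac{\partial A_a}{\partial \p}(\p,\q) = \frac{1}{2}\cdot\frac{\dfrac{\partial D_a}{\partial \p}(\p,\q)\,D_1(\p,\q)\;-\;D_a(\p,\q)\,\dfrac{\partial D_1}{\partial \p}(\p,\q)}{D_1(\p,\q)^2}.
\]

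The key observation is that $A_a$ is anti-symmetric, so $A_a(\p,\p) = -A_a(\p,\p)$, hence $A_a(\p,\p)=0$. Since $D_1(\p,\p)\neq 0$ in the interior of the cube (by the preceding lemma, which identifies it with $\langle\nu,\mathbf{1}_N\rangle>0$), this forces $D_a(\p,\p)=0$. Evaluating the quotient-rule expression at $\p=\q$, the second term in the numerator therefore vanishes, and we are left with
\[
\left.\frac{\partial A_a}{\partial \p}\right|_{\p=\q} = \frac{1}{2\,D_1(\p,\p)}\left.\frac{\partial D_a(\p,\q)}{\partial \p}\right|_{\p=\q},
\]
which is the desired formula (interpreting the $\p_a$ in the statement as the argument $\p$ of the anti-symmetric flow).

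I do not foresee a hard step here: the argument is essentially a one-line application of the quotient rule combined with the anti-symmetry $A_a(\p,\p)=0$. The only thing worth being careful about is justifying that differentiation and restriction to $\p=\q$ commute cleanly, which follows because $D_a$ and $D_1$ are polynomials in the entries of $\p$ and $\q$, and $D_1>0$ on the open cube by the previous lemma. Thus no limiting or regularity issues arise, and no further appeal to the structure theorems of Section~\ref{sec: memory N repeated donation games} is needed for this particular statement.
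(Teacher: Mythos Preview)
Your proposal is correct and essentially identical to the paper's proof: both apply the quotient rule to $A_a = \tfrac12 D_a/D_1$ and use anti-symmetry to kill the $D_a(\p,\p)$ term on the diagonal. The only cosmetic difference is that the paper notes directly that the numerator determinant $D_a(\p,\q)$ is itself anti-symmetric in $\p,\q$ (hence $D_a(\p,\p)=0$), whereas you reach the same conclusion via $A_a(\p,\p)=0$ together with $D_1(\p,\p)\neq 0$; either route is fine.
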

\begin{proof}
    This immediately follows from the anti-symmetric properties of the numerator and the rules of differentiation: recall that 
 $$
A_a = \frac12 \frac{\begin{vmatrix}\widetilde{\mathbf{M}}_N(\mathbf{p},\mathbf{q}) & \mathbf{f}_N-J_N^2\mathbf{f}_N \end{vmatrix}}{\begin{vmatrix}\widetilde{\mathbf{M}}_N(\mathbf{p},\mathbf{q}) & \mathbf{1}_N\end{vmatrix}}.
$$
    One can easily check that $D_1(\p,\q):=\begin{vmatrix}\widetilde{\mathbf{M}}_N(\mathbf{p},\mathbf{q}) & \mathbf{f}_N-J_N^2\mathbf{f}_N \end{vmatrix}$ is an anti-symmetric function of $\p$ and $\q$, while $D_2(\p,\q):=\begin{vmatrix}\widetilde{\mathbf{M}}_N(\mathbf{p},\mathbf{q}) & \mathbf{1}_N\end{vmatrix}$ is a symmetric one. It immediately follows that 
    \begin{equation}
        \begin{split}
            \dot{\p}_a &= \frac{\partial\left(\frac{D_1(\p,\q)}{D_2(\p,\q)}\right)}{\partial \p}\vert_{\p=\q} = \frac{\frac{\partial D_1(\p,\q)}{\partial \p}\vert_{\p=\q}D_2(\p,\p) - \frac{\partial D_2(\p,\q)}{\partial \p}\vert_{\p=\q}D_1(\p,\p)}{D_2^2(\p,\p)}\\ &= \frac{\frac{\partial D_1(\p,\q)}{\partial \p}\vert_{\p=\q}}{D_2(\p,\p)}.
        \end{split}
    \end{equation}
\end{proof}

\noindent
Therefore, unless the denominator turns 0 (the dynamics are not defined at these points in any case) all the conserved quantities and zeros of the anti-symmetric dynamics coincide with those of the system 
\begin{equation}
    \label{eq: antisymmetric reparanetrisation}
\dot{\p}  = \frac{\begin{vmatrix}\widetilde{\mathbf{M}}_N(\mathbf{p},\mathbf{q}) & \mathbf{f}_N-J_N^2\mathbf{f}_N \end{vmatrix}}{\partial \p}\vert_{\p=\q}.
\end{equation}
Henceforth we will be concerned only with the zeros and concerned quantities  of (\ref{eq: antisymmetric reparanetrisation}).

Another fascinating feature of anti-symmetric dynamics that sets them apart from the general case is that they possess a big number of invariants -- at least $2^{N-1}$ in fact.  Adaptive dynamics with an arbitrary payoff vector  preserve $2^{N-1}$ fixed hyperplanes (see the last theorem in \cite{laporte2023adaptive}); anti-symmetric dynamics generalise this property to fully invariant quantities.

 \begin{proposition}
 \label{st: invariants of the skew symmetric}

 Consider elements $p_{i_1\ldots i_{2N}}$ that encode  games in which the players have been making the same decisions in all rounds except for the first. We denote these  $2(N-1)$ indices $i_3\ldots i_{2N}$ by $I$. 
Then for the anti-symmetric adaptive dynamics the quantities $p_{CD\, I}- p_{DC\, I}$ are conserved. 
In particular, the following functions are invariant:
    \begin{enumerate}
        \item $p_{CDCC\ldots CC}-p_{DCCC\ldots CC}$;
        \item $p_{CDDD\ldots DD}-p_{DCDD\ldots DD}$.
    \end{enumerate}
 \end{proposition}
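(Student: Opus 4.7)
The plan is to translate the conservation claim into a statement about the stationary distribution and exploit the $J_N^2$-antiinvariance of the effective payoff vector $\mathbf{f}_N^a := \mathbf{f}_N - J_N^2\mathbf{f}_N$. Combining (\ref{eq: payoff function initial}) with the identity $\nu(\q,\p) = J_N^2\nu(\p,\q)$ (established in the proof of Theorem~\ref{st: decomposition memory N}) gives $A_a(\p,\q) = \tfrac12\langle \nu(\p,\q),\mathbf{f}_N^a\rangle$, so
\[
\dot{p}_i^a = \tfrac{1}{2}\bigl(\partial_{p_i}\nu(\p,\q)\bigr)^{T} \mathbf{f}_N^a\Big|_{\p=\q}.
\]
Since $J_N^2 \mathbf{f}_N^a = -\mathbf{f}_N^a$, every $J_N^2$-invariant vector is orthogonal to $\mathbf{f}_N^a$. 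Thus it suffices to show that
\[
\mathbf{v} := \bigl(\partial_{p_{CD\, I}} - \partial_{p_{DC\, I}}\bigr)\nu(\p,\q)\Big|_{\p=\q}
\]
is fixed by $J_N^2$.

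Next, I would differentiate the left-eigenvector equation $\nu^{T}\mathbf{M}_N(\p,\q) = \nu^T$ with respect to $p_{CD\, I}$ and $p_{DC\, I}$ and subtract. Since $p_{CD\, I}$ appears only in row $CD\, I$ of $\mathbf{M}_N$, and the hypothesis $i_{2k-1}=i_{2k}$ for $k\ge 2$ gives $\bar I = I$, the two rows indexed by $CD\, I$ and $DC\, I$ have their nonzero entries in the same four columns $I\, CC, I\, CD, I\, DC, I\, DD$. Using that $\nu_{CD\, I}(\p,\p) = \nu_{DC\, I}(\p,\p) =: \nu_I$ (equal by the $J_N^2$-invariance of the diagonal stationary distribution, obtained by taking $\q=\p$ in $\nu(\q,\p) = J_N^2\nu(\p,\q)$), a short computation gives
\[
\mathbf{v}^{T}\bigl(\mathbf{M}_N(\p,\p)-I\bigr) = -\nu_I\bigl(p_{DC\, I}-p_{CD\, I}\bigr)\mathbf{e}^{T},
\]
where $\mathbf{e} := e_{I\, CC} - e_{I\, CD} - e_{I\, DC} + e_{I\, DD}$ is supported on those four coordinates only.

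The key step is to verify that $\mathbf{e}$ is itself $J_N^2$-invariant: quasi-diagonality ensures that $J_N^2$ maps $\{I\, CC, I\, CD, I\, DC, I\, DD\}$ to itself via $I\, CD \leftrightarrow I\, DC$, and the sign pattern $(+1,-1,-1,+1)$ is preserved by this swap. Setting $\p=\q$ in the identity $\mathbf{M}_N(\q,\p) = J_N^2 \mathbf{M}_N(\p,\q) J_N^2$ yields $J_N^2\,\mathbf{M}_N(\p,\p) = \mathbf{M}_N(\p,\p)\,J_N^2$, so $(\mathbf{M}_N(\p,\p)-I)^{T}$ commutes with $J_N^2$. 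Applying $J_N^2$ to the transposed equation $(\mathbf{M}_N-I)^T\mathbf{v} = c\,\mathbf{e}$ therefore shows that $J_N^2\mathbf{v}$ satisfies the same equation, whence $J_N^2\mathbf{v} - \mathbf{v}\in\ker\bigl((\mathbf{M}_N-I)^T\bigr)=\mathrm{span}\bigl(\nu(\p,\p)\bigr)$. The normalization $\sum_k\nu_k = 1$ gives $\mathbf{v}^T\mathbf{1}_N = 0$, and $J_N^2 \mathbf{1}_N = \mathbf{1}_N$ gives $(J_N^2\mathbf{v})^T\mathbf{1}_N = 0$. Writing $J_N^2\mathbf{v} - \mathbf{v} = \alpha\,\nu(\p,\p)$ and pairing with $\mathbf{1}_N$ forces $\alpha = 0$, so $J_N^2\mathbf{v} = \mathbf{v}$, and hence $\mathbf{v}^T \mathbf{f}_N^a = 0$.

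The main obstacle is the direct computation yielding the precise form of $\mathbf{e}$ and the verification of its $J_N^2$-invariance; both crucially rely on $\bar I = I$, which is exactly the quasi-diagonality hypothesis on $I$. Without it, the support of $\mathbf{e}$ would straddle two distinct $J_N^2$-orbits and the whole symmetry argument would collapse, which is presumably why the conserved quantities are precisely indexed by the $2^{N-1}$ quasi-diagonal tails $I$.
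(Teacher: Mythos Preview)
Your argument is correct, and it takes a genuinely different route from the paper's.

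The paper works with the Press--Dyson determinant formula~(\ref{eq:payoff function}). It expresses the directional derivative of $p_{CD\,I}-p_{DC\,I}$ along $\dot\p_a$ as a sum of two determinants $\det M_1+\det M_2$ and shows each vanishes separately: $\det M_1=0$ because the undisturbed columns of $\widetilde{\mathbf{M}}_N(\p,\p)$ are already linearly dependent, and $\det M_2=0$ by a dimension count---the columns of $M_2$ contain $2^{2N-1}-2^{N-1}+1$ vectors in the $(-1)$-eigenspace of $J_N^2$, one more than its dimension computed in Lemma~\ref{st: matrix J eigenvalues}. You bypass determinants entirely and argue via eigenvector perturbation: differentiating $\nu^T\mathbf{M}_N=\nu^T$, you identify the inhomogeneity $\mathbf{e}=e_{I\,CC}-e_{I\,CD}-e_{I\,DC}+e_{I\,DD}$, observe that $\bar I=I$ forces $J_N^2\mathbf{e}=\mathbf{e}$, and then use the commutation $J_N^2\mathbf{M}_N(\p,\p)=\mathbf{M}_N(\p,\p)J_N^2$ together with one-dimensionality of the left unit eigenspace and the normalisation constraint to pin down $J_N^2\mathbf{v}=\mathbf{v}$.

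What each buys: the paper's route stays inside the determinantal machinery used throughout Section~\ref{sec: memory N repeated donation games} and makes essential use of the eigenvalue count of Lemma~\ref{st: matrix J eigenvalues}; your route is shorter, does not need that lemma, and makes the mechanism transparent---the conserved quantity exists precisely because the perturbation direction $\partial_{p_{CD\,I}}-\partial_{p_{DC\,I}}$ produces a $J_N^2$-symmetric source, which in turn relies exactly on $\bar I=I$. Your closing remark about why the argument fails for non-diagonal $I$ is the right diagnosis: for such $I$ the four columns supporting $\mathbf{e}$ are split across two $J_N^2$-orbits of size four rather than one orbit of the form $\{I\,CC,I\,CD,I\,DC,I\,DD\}$, and $\mathbf{e}$ is no longer $J_N^2$-invariant.
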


\noindent
A particular case of this statement was formulated in Lemma~\ref{st: invariants of the skew symmetric} for memory-1 strategies; it can also be numerically checked for memory-2. We provide an analytic proof of the general property, while using the case $N=2$ as an illustration; the statements proved for $N=2$ can be generalised easily.  

Recall that conjugation by the matrix $J^2_N$ exchanges $\p$ and $\q$. We examine the properties of this conjugation for the anti-symmetric dynamics in detail, starting with  \begin{lemma}
\label{st: matrix J eigenvalues}
    The matrix $J^2_N$ has $2^{2N-1} -2^{N-1}$ eigenvectors corresponding to the eigenvalue $-1$ and $2^{2N}-2^{2N-1} + 2^{N-1}$ eigenvectors with eigenvalue 1. 
\end{lemma}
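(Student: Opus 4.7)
The plan is to recognise $J^2_N$ as the matrix of an involutive permutation, identify which permutation it is, count its fixed points, and read off the eigenvalue multiplicities from the cycle structure.

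First, I would observe that the recursive definition \eqref{eq: recursive J2} makes $J^2_N$ a $0/1$ matrix with exactly one $1$ in every row and column, so it is a permutation matrix. Moreover, the proof accompanying Theorem~\ref{st: decomposition memory N} established that conjugation by $J^2_N$ exchanges $\mathbf{p}$ and $\mathbf{q}$; performing this exchange twice returns the original strategies, which forces $(J^2_N)^2=I$. Hence $J^2_N$ is an involutive permutation matrix, and in particular all its eigenvalues are $\pm 1$.

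Next, I would identify the underlying permutation $\sigma_N$ explicitly. Unwinding Lemma~\ref{lem: construction matrix} together with the proof of Theorem~\ref{st: decomposition memory N} shows that $\sigma_N$ acts on the index set $\{C,D\}^{2N}$ pair-by-pair: each pair $(i_{2k-1},i_{2k})$ of the form $CC$ or $DD$ is fixed, while each pair $CD$ is swapped with $DC$. This matches precisely the bar-operation $i\mapsto\bar i$ used throughout Section~\ref{sec: memory N repeated donation games} to exchange the two players' perspectives. (If a cleaner derivation is wanted, one can argue this by induction directly on the block formula \eqref{eq: recursive J2}.)

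The third step is a combinatorial count. A fixed point of $\sigma_N$ is an index whose $N$ round-pairs each independently equal $CC$ or $DD$; there are exactly $2^N$ such indices. The remaining $2^{2N}-2^N$ indices are partitioned into transpositions of $\sigma_N$, giving $m_N:=(2^{2N}-2^N)/2=2^{2N-1}-2^{N-1}$ two-cycles.

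Finally, I would translate cycle structure into spectrum via the standard observation for involutive permutation matrices: each fixed index $i$ yields an eigenvector $e_i$ with eigenvalue $+1$, and each two-cycle $\{i,j\}$ contributes $e_i+e_j$ with eigenvalue $+1$ and $e_i-e_j$ with eigenvalue $-1$. These vectors are manifestly linearly independent and span $\mathbb{R}^{2^{2N}}$. Therefore the multiplicity of $-1$ equals the number of two-cycles, $2^{2N-1}-2^{N-1}$, and the multiplicity of $+1$ equals $2^N+m_N=2^{2N-1}+2^{N-1}=2^{2N}-2^{2N-1}+2^{N-1}$, matching the statement. The only step that needs real care is the identification of $\sigma_N$ with the pair-swap permutation; once this is in hand the rest is bookkeeping, and I would handle it either by invoking the exchange property of $J^2_N$ proved earlier or by a short induction on the recursive block form.
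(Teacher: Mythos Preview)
Your argument is correct and takes a genuinely different route from the paper. The paper proves the lemma by induction on $N$: it checks the base case $N=1$ explicitly, then for the inductive step uses the block form \eqref{eq: recursive J2} to build eigenvectors of $J^2_N$ out of eigenvectors $\mathbf v$ (eigenvalue $+1$) and $\mathbf w$ (eigenvalue $-1$) of $J^2_{N-1}$, and finally evaluates the resulting recursion for the $-1$ multiplicity as a geometric-type sum. Your approach instead identifies $J^2_N$ globally as the permutation matrix of the involution $i\mapsto\bar i$ on $\{C,D\}^{2N}$, counts its $2^N$ fixed points directly, and reads off the spectrum from the cycle decomposition of an involution. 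This is more conceptual and shorter: it explains \emph{why} the multiplicities have the form $2^{2N-1}\pm 2^{N-1}$ (half the non-fixed indices go each way) rather than extracting the numbers from a sum. The paper's induction, on the other hand, produces explicit eigenvectors compatible with the recursive block structure, which is what is actually used later in the proof of Proposition~\ref{st: invariants of the skew symmetric}; your approach yields the same eigenvectors $e_i\pm e_j$ but phrased in terms of the global permutation rather than the recursion. The one point you flag as needing care---that $\sigma_N$ really is the pair-swap permutation---follows cleanly from the block form \eqref{eq: recursive J2} by the short induction you suggest, since the outer $4\times 4$ block pattern swaps the $CD$ and $DC$ blocks while $J^2_{N-1}$ handles the remaining $N-1$ pairs.
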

\begin{proof}
Firstly, the matrix $J^2_1$ has eigenvalue $-1$ with eigenvector $\begin{pmatrix} 0&1&-1&0 \end{pmatrix}$ and eigenvalue 1 with eigenvectors $\begin{pmatrix} 1&0&0&0 \end{pmatrix}$,$\begin{pmatrix} 0&0&0&1\end{pmatrix}$,$\begin{pmatrix} 0&1&1&0 \end{pmatrix}$. This is the base of our induction. 

Consider the matrix $J^2_{N-1}$ and let it have $k$ eigenvectors $\mathbf{u}_1,\ldots, \mathbf{u}_k$ corresponding to eigenvalue $-1$ and $2^{2(N-1)}-k$ eigenvectors corresponding to the eigenvalue 1. Note that we are making two assumptions here: on the total number of eigenvectors and on their parity. 
Suppose that a vector $\mathbf{v}$ is such that $J^2_{N-1}\mathbf{v} =\mathbf{v}$ and a vector $\mathbf{w}$ is such that $J^2_{N-1}\mathbf{w} = -\mathbf{w}$.  Then the following vectors are eigenvectors of $J^2_N$ with eigenvalue 1:
\[
\begin{pmatrix}
    \mathbf{v}\\
    0\\
    0\\
    0\\
\end{pmatrix}, \ \ \begin{pmatrix}
0\\
    \mathbf{v}\\
    \mathbf{v}\\
     0\\
\end{pmatrix},  \ \ \begin{pmatrix}
    0\\
    \mathbf{w}\\
    -\mathbf{w}\\
    0\\
\end{pmatrix}, \ \  \begin{pmatrix}
    0\\
    0\\
    0\\
    \mathbf{v}
\end{pmatrix},
\]
and the following vectors correspond to eigenvalue $-1$:
\[
\begin{pmatrix}
    \mathbf{w}\\
    0\\
    0\\
    0\\
\end{pmatrix}, \ \ \begin{pmatrix}
0\\
    \mathbf{w}\\
    \mathbf{w}\\
     0\\
\end{pmatrix},  \ \ \begin{pmatrix}
    0\\
    \mathbf{v}\\
    -\mathbf{v}\\
    0\\
\end{pmatrix}, \ \  \begin{pmatrix}
    0\\
    0\\
    0\\
    \mathbf{w}
\end{pmatrix}
\]
Adopting our induction assumption, we can conclude that the set of newly constructed vectors contains $2^{2N}$ vectors, of which $2k + 2^{2(N-1)}$ have eigenvalue $-1$ and $3*2^{2(N-1)} - 2k$ eigenvalue $1$. 

The last step is the computation of the sum 
\begin{equation}
\label{eq: sum of 2s}
2^{2(N-1)} + 2\left(2^{2(N-2)} + 2\left(2^{2(N-3)} + 2\left(2^{N-4} + 2(\ldots + 2(2^0))\right)\right)\right)
\end{equation}
for the total number of $-1$ eigenvectors. One can see that 
\begin{equation*}
    \label{eq: computation}
    (\ref{eq: sum of 2s}) = 2^{2N-2} + 2^{2N-4} +\ldots  + 2^{N-1} = 2^{N-1}(2^N-1)  =2^{2N-1} - 2^{N-1}, 
\end{equation*}
which concludes our proof.
\end{proof}

\noindent
By construction, $A_a(\p,\q)$ is an anti-symmetric function of $\p$ and $\q$. 
\begin{lemma}
If a vector $\phi_N\in \mathbf{R}^{2^{2N}}$ is such that $J^2_N\phi = -\phi$, then for the  matrix $\begin{pmatrix}\widetilde{\mathbf{M}}_N(\mathbf{p},\mathbf{p})&\phi_N \end{pmatrix}$ the following hold:
    \begin{enumerate}
     \item The matrix $\begin{pmatrix}\widetilde{\mathbf{M}}(\mathbf{p},\mathbf{p}) & \phi_N  \end{pmatrix}$ has linearly dependent columns.
    \item $J^2\begin{pmatrix}\widetilde{\mathbf{M}}_N(\mathbf{p},\mathbf{p}) & \phi_N \end{pmatrix}J^2 =\begin{pmatrix}\widetilde{\mathbf{M}}_N(\mathbf{p},\mathbf{p}) &-\phi_N \end{pmatrix}$.
   
\end{enumerate}
\end{lemma}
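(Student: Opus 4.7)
The plan is to establish Part (2) directly and then to deduce Part (1) from Part (2) via the alternating property of the determinant. The key ingredients are already available in the paper: the identity $J^2_N \mathbf{M}_N(\p,\q) J^2_N = \mathbf{M}_N(\q,\p)$ proved in the run-up to Theorem~\ref{st: decomposition memory N}, which at $\p=\q$ reduces to the invariance $J^2_N \mathbf{M}_N(\p,\p) J^2_N = \mathbf{M}_N(\p,\p)$; the involutivity $(J^2_N)^2 = I$; and the fact that $J^2_N$ fixes the last standard basis vector $e_n$, where $n=2^{2N}$. This last fact follows by a short induction from the recursive definition~(\ref{eq: recursive J2}): the bottom-right block of $J^2_N$ is $J^2_{N-1}$, so the $(n,n)$-entry equals $1$, and since $J^2_N$ is a permutation matrix, its $n$-th column must be $e_n$. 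By symmetry of $J^2_N$ (which follows from $(J^2_N)^2=I$ together with $(J^2_N)^{-1}=(J^2_N)^T$), we then also have $e_n^T J^2_N = e_n^T$.

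For Part (2), I would first write
\begin{equation*}
\mathbf{\Phi} := \begin{pmatrix}\widetilde{\mathbf{M}}_N(\p,\p) & \phi_N\end{pmatrix} = (\mathbf{M}_N(\p,\p) - I)(I - e_n e_n^T) + \phi_N e_n^T,
\end{equation*}
which simply expresses ``zero out the last column of $\mathbf{M}_N(\p,\p) - I$ and install $\phi_N$ there''. Conjugating by $J^2_N$, the first summand is preserved: the factor $\mathbf{M}_N(\p,\p) - I$ is invariant, and $J^2_N(I - e_n e_n^T) J^2_N = I - e_n e_n^T$ because $J^2_N e_n = e_n$. The second summand transforms as
\begin{equation*}
J^2_N \phi_N e_n^T J^2_N = (J^2_N \phi_N)(e_n^T J^2_N) = (-\phi_N)\, e_n^T,
\end{equation*}
using the hypothesis $J^2_N \phi_N = -\phi_N$. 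Summing the two contributions yields $\begin{pmatrix}\widetilde{\mathbf{M}}_N(\p,\p) & -\phi_N\end{pmatrix}$, which is exactly the claim.

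Part (1) is then immediate. Taking determinants of the identity in Part (2), the left-hand side equals $\det(J^2_N)^2 \det(\mathbf{\Phi}) = \det(\mathbf{\Phi})$, since $\det(J^2_N) = \pm 1$; meanwhile, the right-hand side equals $-\det(\mathbf{\Phi})$ by multilinearity of the determinant in the last column. Hence $\det(\mathbf{\Phi}) = 0$, so the columns of $\mathbf{\Phi}$ are linearly dependent.

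The only step that takes any care is verifying $J^2_N e_n = e_n$, but this is a one-line induction from the recursive definition and should not present any real difficulty. All other ingredients (the invariance $J^2_N \mathbf{M}_N(\p,\p) J^2_N = \mathbf{M}_N(\p,\p)$ and $(J^2_N)^2 = I$) have already been recorded earlier in the text, so the proof is essentially a bookkeeping exercise in how the rank-one modification $\phi_N e_n^T$ interacts with conjugation by $J^2_N$.
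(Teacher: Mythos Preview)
Your proof is correct and follows essentially the same route as the paper, just with the two parts taken in reverse order. The paper first observes that $\begin{vmatrix}\widetilde{\mathbf{M}}_N(\p,\q) & \phi_N\end{vmatrix}$ is anti-symmetric in $(\p,\q)$ (via the same conjugation identity $J^2_N\mathbf{M}_N(\p,\q)J^2_N=\mathbf{M}_N(\q,\p)$ and $J^2_N\phi_N=-\phi_N$) and hence vanishes at $\p=\q$, then notes Part~(2) as a direct consequence of that conjugation at $\p=\q$; you instead establish Part~(2) explicitly via the rank-one decomposition and then read off Part~(1) by taking determinants. The underlying mechanism --- in particular the need for $J^2_N e_n = e_n$ so that the ``last column'' slot is preserved under conjugation --- is identical in both arguments; your presentation simply makes this bookkeeping more visible.
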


\begin{proof}
Figure \ref{fig:mat degenerate} shows the matrix $\begin{pmatrix}\widetilde{\mathbf{M}}_N(\p,\p), \phi_N\end{pmatrix}$ for $N=2$ and $\phi$ for the game with equal gains from switching - a computation yields that the rank of this $16\times 16$ matrix is 15. 

For the general case, the function $\begin{vmatrix}\widetilde{\mathbf{M}}_N(\p,\q), \phi_N\end{vmatrix}$ is an anti-symmetric function of $\p$ and $\q$ due to our choice of $\phi$. Therefore setting $\q=\p$ turns it 0, which entails that its columns form a linearly dependent set. Note that this property holds for any $\phi_N$, as long as $J^2\phi_N = -\phi_N$. 
\begin{figure}
    \centering
    \includegraphics[scale=.21]{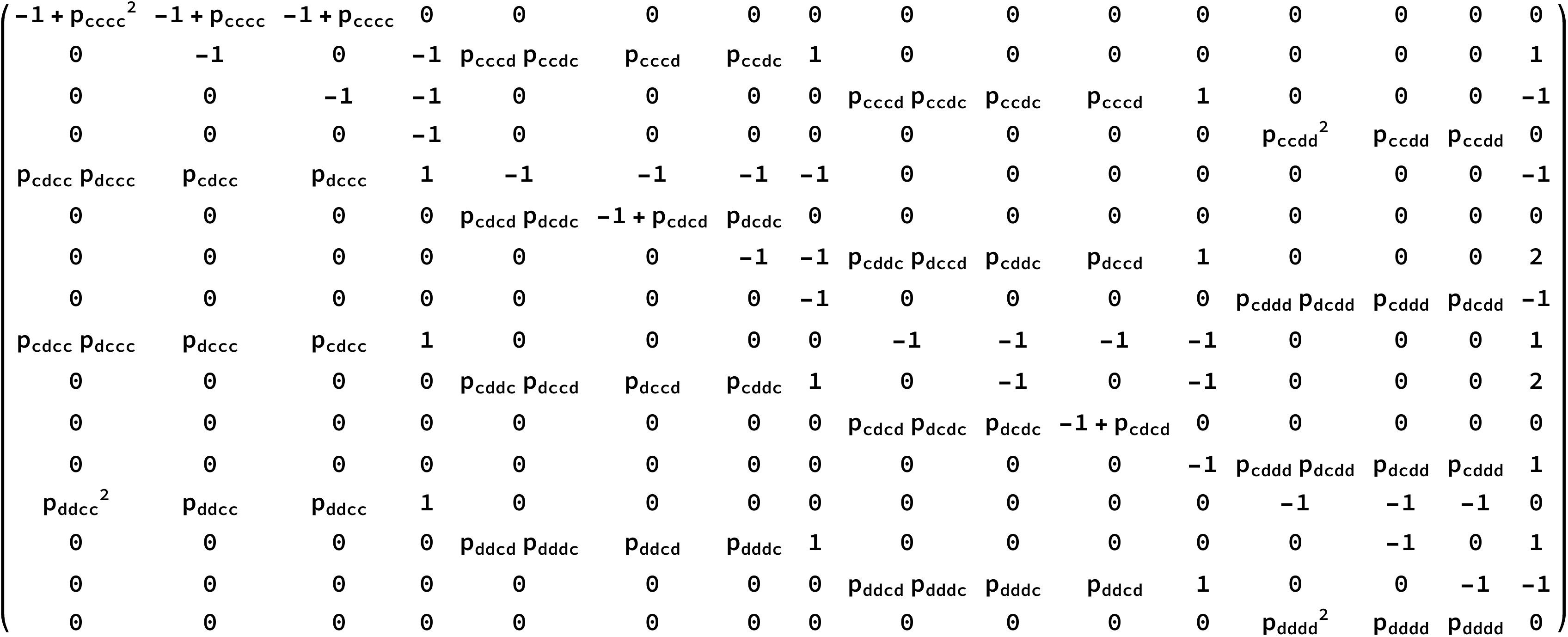}
    \caption{The matrix $\begin{pmatrix}\widetilde{\mathbf{M}}(\mathbf{p},\mathbf{p}) & \mathbf{f}-J_N^2\mathbf{f}\end{pmatrix}$}
    \label{fig:mat degenerate}
\end{figure}
The second statement follows from the fact that conjugation by $J^2_N$ exchanges $\p$ and $\q$; therefore, the matrix $\begin{pmatrix}\widetilde{\mathbf{M}}_N(\p,\p), \phi_N\end{pmatrix}$ has to be invariant under it, barring the last column. 
\end{proof}

\begin{lemma}
\label{st: permutation skew sym }
Left multiplication by the matrix $J^2_N$ permutes the  columns of $\begin{pmatrix}\widetilde{\mathbf{M}}_N(\p,\p), \phi_N\end{pmatrix}$ (and multiplies the last one by $-1$). 
If we number the columns from left to right and denote this permutation by $\mu$, then this permutation leaves $2^{N}$  numbers intact. In particular, 
\begin{equation}
\label{eq: permutation skew sym }
    \begin{split}
        &\mu(1) = 1,\\
       & \mu(2)=3,\\
        &\mu(3) = 2,\\
        &\mu(4) = 4, 
    \end{split}  \ \ \ \ \ \ \ \ \ \ \
    \begin{split}
&\mu(2^{2N}-4) = 2^{2N}-4, \\ &\mu(2^{2N}-3) = \mu(2^{2N}-2),\\ & \mu(2^{2N}-2) = \mu(2^{2N}-3), \\ & \mu(2^{2N}) = \mu(2^{2N}).
    \end{split}
\end{equation}

\end{lemma}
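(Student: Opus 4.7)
The plan is to exploit the identity $J_N^2\, \mathbf{M}_N(\mathbf{p},\mathbf{q})\, J_N^2 = \mathbf{M}_N(\mathbf{q},\mathbf{p})$ established in Section~\ref{sec:exchanging p and q}. Setting $\mathbf{p} = \mathbf{q}$ collapses the conjugation to $J_N^2\, \mathbf{M}_N(\mathbf{p},\mathbf{p})\, J_N^2 = \mathbf{M}_N(\mathbf{p},\mathbf{p})$, and because $J_N^2$ is a symmetric involution this rearranges, together with $J_N^2\, I\, J_N^2 = I$, into
\[
J_N^2\, \widetilde{\mathbf{M}}_N(\mathbf{p},\mathbf{p}) \;=\; \widetilde{\mathbf{M}}_N(\mathbf{p},\mathbf{p})\, J_N^2.
\]
The right-hand side is right multiplication of $\widetilde{\mathbf{M}}_N(\mathbf{p},\mathbf{p})$ by a permutation matrix, which is nothing but a column permutation. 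Hence left multiplication by $J_N^2$ produces exactly the same column permutation on the first $2^{2N}$ columns of $\begin{pmatrix}\widetilde{\mathbf{M}}_N(\mathbf{p},\mathbf{p}) & \phi_N\end{pmatrix}$. For the appended column, the eigenvalue hypothesis $J_N^2 \phi_N = -\phi_N$ gives precisely the factor $-1$. This establishes the first sentence of the lemma, with $\mu$ equal to the permutation $\sigma$ induced on indices by $J_N^2 e_i = e_{\sigma(i)}$.

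To identify $\sigma$ explicitly, I would induct on $N$ using the recursive construction~\eqref{eq: recursive J2}. The base case is immediate: $J_1^2$ fixes indices $1$ ($CC$) and $4$ ($DD$) and swaps $2$ ($CD$) with $3$ ($DC$). For the inductive step, the four-block structure of $J_N^2$, whose nonzero blocks are copies of $J_{N-1}^2$ placed at positions $(1,1), (2,3), (3,2), (4,4)$, shows that on a column index $(i_1 i_2)(i_3 \ldots i_{2N})$ the leading pair $i_1 i_2$ is acted on exactly as in the memory-1 case, while the remaining $2(N-1)$ indices undergo the permutation induced by $J_{N-1}^2$. By the inductive hypothesis, that trailing action is the per-pair swap $CD \leftrightarrow DC$, so the whole permutation $\sigma$ is the per-pair swap $CD \leftrightarrow DC$ applied to all $N$ pairs.

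The fixed-point count is then immediate: a column index is fixed by $\sigma$ if and only if every one of its $N$ pairs is $CC$ or $DD$, giving $2^N$ fixed indices. The specific values at the boundary of the lex order follow by inspection. The first four strings are $CC\ldots CC\,CC$, $CC\ldots CC\,CD$, $CC\ldots CC\,DC$, $CC\ldots CC\,DD$; the first and last have every pair in $\{CC,DD\}$ and are therefore fixed, while the middle two differ only in their final pair ($CD$ versus $DC$) and are swapped. The same argument with $DD\ldots DD$ in place of $CC\ldots CC$ handles the last four indices and gives the analogous pattern.

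The main obstacle is essentially bookkeeping: carefully matching the recursive block positions of $J_N^2$ to the per-pair swap action on binary index strings so that the induction goes through cleanly. There is no conceptual difficulty, because the eigenvector statement $J_N^2 \phi_N = -\phi_N$ and the commutation identity do all the algebraic work; the only care needed is in translating matrix-level statements into index-level statements at each step.
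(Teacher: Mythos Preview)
Your argument is correct and follows essentially the same route as the paper. Both proofs convert the conjugation identity $J_N^2\,M\,J_N^2 = M$ (with $M=\mathbf{M}_N(\mathbf{p},\mathbf{p})-I$) into the commutation $J_N^2 M = M J_N^2$, observe that the right-hand side is a column permutation, and handle the appended column via $J_N^2\phi_N=-\phi_N$; both then count fixed points by induction on the recursive block structure of $J_N^2$. Your version is slightly more explicit in naming the permutation as the per-pair swap $CD\leftrightarrow DC$, which makes the $2^N$ count and the boundary values transparent. One notational slip: $\widetilde{\mathbf{M}}_N(\mathbf{p},\mathbf{p})$ in the paper denotes only the first $2^{2N}-1$ columns of $\mathbf{M}_N-I$, so the commutation identity should be written for $\mathbf{M}_N(\mathbf{p},\mathbf{p})-I$ and then restricted (using that the last index $DD\ldots DD$ is a fixed point of $\sigma$, so the replaced column does not mix with the others).
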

\begin{proof}

    Consider the expression $J^2 \begin{pmatrix}\widetilde{\mathbf{M}}_N(\mathbf{p},\mathbf{p}) & \phi \end{pmatrix}J^2$.  As stated in Lemma \ref{st: permutation skew sym }, this matrix is equal to  $\begin{pmatrix}\widetilde{\mathbf{M}}_N(\mathbf{p},\mathbf{p}) & -\phi \end{pmatrix}$ 
    
    Multiplication by $J^2_N$ from the left permutes the rows of $\begin{pmatrix}\widetilde{\mathbf{M}}(\mathbf{p},\mathbf{p}) & \phi\end{pmatrix}$ while multiplication from the right permutes its columns. The last column is turned into minus itself, but barring it the set of columns of $\begin{pmatrix}\widetilde{\mathbf{M}}(\mathbf{p},\mathbf{p}) & \phi\end{pmatrix}$ must be invariant under left multiplication by $J^2_N$; hence it is a permutation of this set. 

The relations (\ref{eq: permutation skew sym }) are clear for $N=2$; for larger $N$, we proceed by induction. Suppose the statement is true for some $N$, i.e. $2^{2N}$ columns remain in their original places after multiplication from the left by $J_N^2$. Then for $N+1$, the number of stationary vectors will be twice as large -- this follows from the construction of the matrix $J^2_{N+1}$, namely, from the top right and bottom left copies of $J^2_{N-1}$. Therefore, the total number of stationary vectors is $2^{N+1}$.

In particular, equations (\ref{eq: permutation skew sym }) follow directly from the form of $J_N^2$.

\end{proof}

Once again, one can easily check these lemmata on  the toy example from Figure \ref{fig:mat degenerate}. If we denote the  columns of $\begin{pmatrix} \widetilde{\mathbf{M}}_2(\p,\p), \mathbf{f}_2-J^2_2\mathbf{f}_2\end{pmatrix}$ by $\v_1, \ldots \v_{16}$, the explicit form of $\mu$ is  
\begin{equation}
\begin{split}
J^2\v_1 &= \v_1\\
J^2\v_2 &= \v_3\\
J^2\v_3 &= \v_2\\
J^2\v_4 &= \v_4\\
\end{split}\ \ \ 
\begin{split}
J^2\v_5 &= \v_9\\
J^2\v_6 &= \v_{11}\\
J^2\v_7 &= \v_{10}\\
J^2\v_8 &= \v_{12}\\
\end{split}\ \ \ 
\begin{split}
&J^2\v_9 = \v_5\\
&J^2\v_{10} = \v_7\\
&J^2\v_{11} = \v_6\\
&J^2\v_{12} = \v_8\\
\end{split}\ \ \ 
\begin{split}
&J^2\v_{13} = \v_{13}\\
&J^2\v_{14} = \v_{15}\\
&J^2\v_{15} = \v_{14}\\
&J^2\v_{16} = -\v_{16}\\
\end{split}
\end{equation}
In general, we denote this permutation by $\sigma$ and write that $J\v_{i} = \v_{\sigma(i)}$
\begin{equation}
\label{eq: zero combination}
k_1\v_1 + k_2\v_2+\ldots +k_{2^{2N}}\v_{2^{2N}} = \mathbf{0}.
\end{equation}
Multiplying  (\ref{eq: zero combination}) by $J^2_N$ gives
\begin{figure}
    \centering
    \includegraphics[scale=.31]{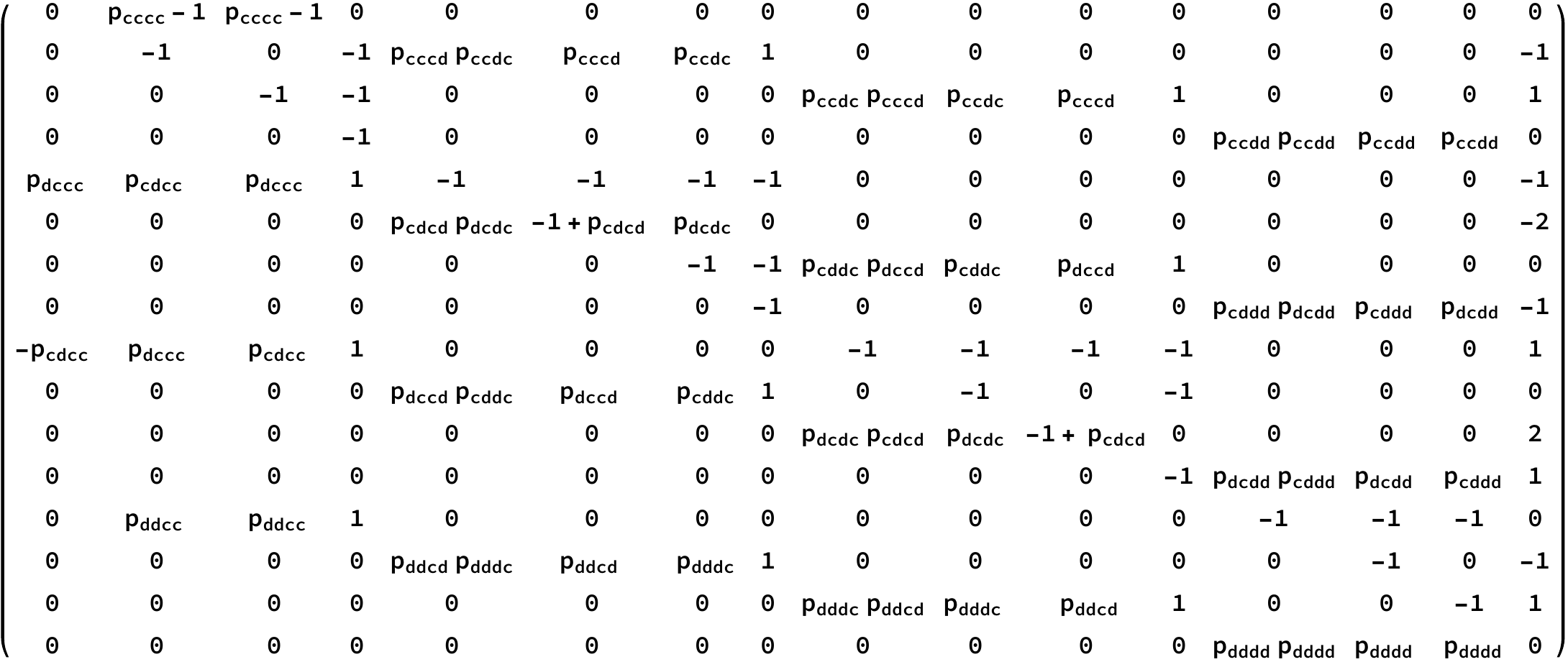}
    \caption{The first part of the directional derivative of $p_{}$  }
    \label{fig:der1}
\end{figure}
\begin{equation}
\label{eq: another zero sum}
\begin{split}
k_1\v_{\sigma(1)} + k_2\v_{\sigma(2)}+\ldots -k_{2^{2N}}\v_{\sigma(2^{2N})} = \mathbf{0}.
\end{split}
\end{equation}
Subtracting (\ref{eq: another zero sum}) from (\ref{eq: zero combination}), we get 
\begin{equation}
\label{eq: third zero sum}
\begin{split}
\sum\limits_{i=1}^{2^{2N}-1} \v_i\left(k_i-k_{\sigma(i)}\right) + 2k_{2^{2N}}\v_{2^{2N}}=0.
\end{split}
\end{equation}
As per Lemma \ref{st: permutation skew sym }, $2^N$  vectors,  including $\v_1, \v_4, \v_{2^{2N}-4}$ are included in this sum with zero coefficients. There are two possibilities:
\begin{enumerate}
    \item  the vectors in (\ref{eq: third zero sum}) are linearly independent, and all the coefficients are equal to 0, including $k_{2^{2N}}$;
    \item (\ref{eq: third zero sum}) has at least one nonzero coefficient. 
\end{enumerate}

\noindent
Option 1 being true entails that the matrix $\begin{pmatrix}\widetilde{\mathbf{M}}(\mathbf{p},\mathbf{p}) & \mathbf{f}\end{pmatrix}$ is degenerate for every choice of the vector $\mathbf{f}$ (since $k_{2^{2N}}=0$, (\ref{eq: zero combination}) holds for all the columns of $\mathbf{M}_N(\p,\p)$ for any payoff vector), which is not true (consider, e.g. the symmetric dynamics). Hence, there are nonzero coefficients in (\ref{eq: third zero sum}).

\begin{observation}
\label{st: observation skew symm eigenpairs}
    The following hold true:
    \begin{enumerate}
        \item 
        The vectors $\mathbf{v}_i$ and $\mathbf{v}_{\sigma(i)}$ \textbf{do} pair up nicely as in \ref{eq: third zero sum} (i.e. there are no cycles of length 3 or  
        more) owing to the fact that $J^2_N$ has order 2.
        \item As per Lemma \ref{st: permutation skew sym }, there are $2^{2N-1}-2^{N-1}$ of these pairs of vectors. 
        \item All the vectors $\mathbf{v}_i-\mathbf{v}_{\sigma(i)}$ are eigenvectors of the matrix $J^2_N$ corresponding to eigenvalue $-1$. 
    \end{enumerate}
\end{observation}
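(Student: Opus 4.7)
The plan is to derive all three claims as consequences of a single structural fact: the matrix $J_N^2$ is an involution, i.e., $(J_N^2)^2 = I$. This is easily established by induction on $N$ using the block-recursive formula~(\ref{eq: recursive J2}): the base case $(J_1^2)^2 = I$ is a direct check, and the inductive block multiplication reduces to squaring smaller copies of $J_{N-1}^2$, which equals the identity by hypothesis. With involutivity in hand, the three observations fall out in order.

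For claim (1), the left action of $J_N^2$ on the columns $\v_1, \ldots, \v_{2^{2N}-1}$ of the augmented matrix $\begin{pmatrix}\widetilde{\mathbf{M}}_N(\p,\p) & \phi_N\end{pmatrix}$ is given by the permutation $\sigma$ from Lemma~\ref{st: permutation skew sym }. Applying $J_N^2$ twice sends $\v_i \mapsto \v_{\sigma(i)} \mapsto \v_{\sigma^2(i)}$; by involutivity this composition equals the identity, so $\sigma^2 = \mathrm{id}$ as a permutation of indices. Any permutation whose square is the identity decomposes solely into fixed points and disjoint transpositions, so no cycles of length three or more can appear, and the indices appearing in equation~(\ref{eq: third zero sum}) genuinely organize into unordered pairs $\{i,\sigma(i)\}$.

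For claim (2), the counting is then immediate: the augmented matrix has $2^{2N}$ columns, Lemma~\ref{st: permutation skew sym } states that exactly $2^N$ of these indices are fixed by $\mu$, and by (1) the remaining $2^{2N}-2^N$ non-fixed indices form disjoint transpositions, yielding $(2^{2N}-2^N)/2 = 2^{2N-1}-2^{N-1}$ pairs. For claim (3), I would compute directly: for any non-fixed $i$,
\begin{equation*}
J_N^2(\v_i - \v_{\sigma(i)}) = J_N^2 \v_i - J_N^2 \v_{\sigma(i)} = \v_{\sigma(i)} - \v_{\sigma^2(i)} = \v_{\sigma(i)} - \v_i = -(\v_i - \v_{\sigma(i)}),
\end{equation*}
using $\sigma^2 = \mathrm{id}$, so each such $\v_i-\v_{\sigma(i)}$ lies in the $(-1)$-eigenspace of $J_N^2$. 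The only subtlety is sign bookkeeping at the last column $\v_{2^{2N}}$, which is negated rather than permuted; its index is fixed by $\mu$ (and thus counted in the $2^N$) but it is already a $(-1)$-eigenvector on its own and does not contribute via the pairing. Beyond the involution property and this careful bookkeeping on the distinguished last column, there is no substantive obstacle — the whole observation is structural and follows without computation from $(J_N^2)^2 = I$ together with Lemma~\ref{st: permutation skew sym }.
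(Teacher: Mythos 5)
Your proposal is correct and follows essentially the same route the paper intends: the paper justifies claim (1) precisely by the order-2 property of $J_N^2$, claim (2) by the fixed-point count in Lemma~\ref{st: permutation skew sym }, and claim (3) by the same one-line eigenvector computation you give. The only point worth flagging is that deducing $\sigma^2=\mathrm{id}$ from $(J_N^2)^2=I$ implicitly assumes the columns $\v_i$ are pairwise distinct; this is harmless here since $\sigma$ is really the index permutation induced structurally by the permutation matrix $J_N^2$, which is symmetric and hence an involution on indices regardless.
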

\noindent
Armed with these preliminary results,  we can move on to proving Proposition \ref{st: invariants of the skew symmetric}. 

\begin{proof}[Proof of Proposition~\ref{st: invariants of the skew symmetric}]
 We use memory-2 matrices and taking derivatives with respect to $p_{CDCC}$ and $p_{DCCC}$ as an example, but this proof can be repeated verbatim for the second invariant and for any $N$ -- it will be pointed out how crucial steps of the proof can be generalised. Lastly, we will demonstrate how the proposition can be generalised to any arbitrary index $I$. 

Consider the directional derivative of the function $p_{CDCC}-p_{DCCC}$ along the flow of the vector field $\dot{\p}_a$. Using the rules for differentiation of determinants and setting $\p=\q$, we write 
\begin{equation}
\begin{split}
\partial_{\dot{\p}_a}\left(p_{CDCC}-p_{DCCC}\right) &=\left<\nabla\left(p_{CDCC}-p_{DCCC}\right) , \dot{\p}_a\right>\\& =  \frac{1}{\begin{vmatrix}\widetilde{\mathbf{M}}_N(\mathbf{p},\mathbf{q}) & \mathbf{1}_N\end{vmatrix}}\left(\det(M_1) + \det(M_2)\right)
\end{split}
\end{equation}
where the matrices $M_1$ amd $M_2$ are respectively shown in Figures \ref{fig:der1} and \ref{fig:der2}.
\begin{figure}
    \centering
    \includegraphics[scale=.6]{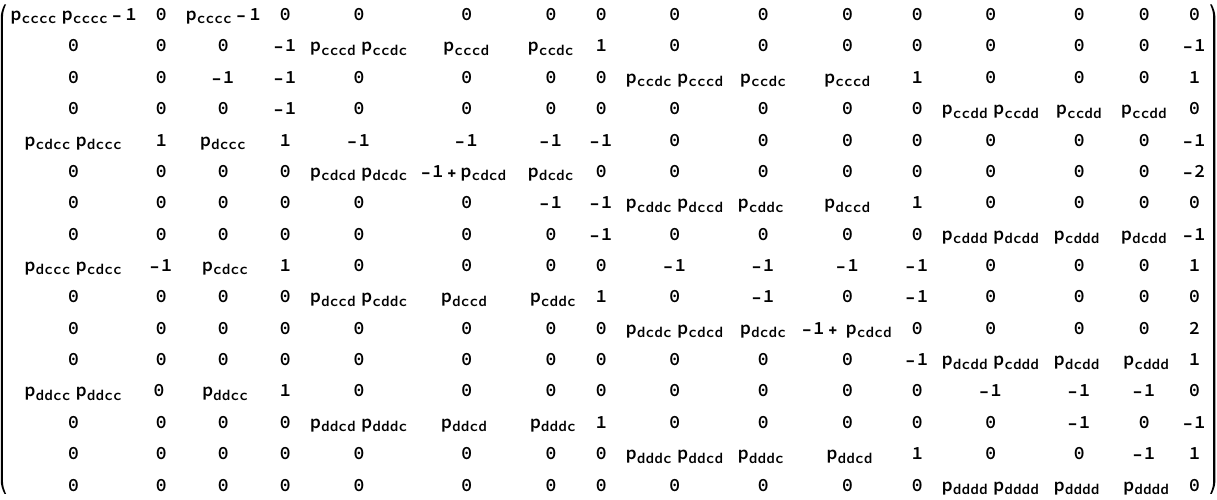}
    \caption{The second part of the directional derivative of $p_{}$  }
    \label{fig:der2}
\end{figure}
Now we need to demonstrate that the determinants of both matrices are equal to zero. We start with the first. 

$\mathbf{det(M_1)=0}$. This is the more easily proved statement of the two: all the columns of $M_1$ except for the first are identical to those of $\mathbf{M}_N(\p,\p)$, and the columns of $\mathbf{M}(\p,\p)$ form a linearly dependent set even without the first one, so it does not matter which form the first column has.

$\mathbf{det(M_2)=0}$. Here, we leverage on Lemma \ref{st: matrix J eigenvalues} and Observation \ref{st: observation skew symm eigenpairs}. As stated there, the vectors $\mathbf{v}_i-\mathbf{v}_{\sigma(i)}$ and $\mathbf{v}_{2^{2N}}$  are eigenvectors of $J^2_N$ with eigenvalue $-1$; moreover, there are $2^{2N-1}-2^{N-1}+1$ such vectors, whereas the $-1$ eigenspace of $J^2_N$ has dimension  $2^{2N-1}-2^{N-1}$; therefore, this system has to be linearly dependent. 

Consider the following set of vectors: $\mathbf{v}_{16}$, $\mathbf{v}_i-\mathbf{v}_{\sigma(i)}$ for $i=3,\ldots, 15$  and the second column of $M_2$, i.e. the vector 
\[
\begin{pmatrix}
0 &
 0 &
 0 &
 0 &
 1 &
 0 &
 0 &
 0 &
 -1 &
 0 &
 0 &
 0 &
 0 &
 0 &
 0 &
 0 &
 \end{pmatrix}
\]
(for arbitrary $N$, take the vector with $1$ in the same row as $p_{CDCC\ldots CC}$ and $-1$ in the same row as $p_{DCCC\ldots CC}$; and for the second function from Lemma \ref{st: invariants of the skew symmetric}, take a vector with $1$ in the same row as $p_{CDDD\ldots DD}$ and $-1$ in the same row as $p_{DCDD\ldots DD}$). The span of this set is included in the span of the columns of $M_2$.

This set  contains 7 vectors ($2^{2N-1}-2^{N-1}+1$ for generic values of  $N$), whereas there can be at most 6 (or $2^{2N-1}-2^{N-1}$) linearly independent eigenvectors of $J^2_N$ with eigenvalue $-1$. Hence, our constructed set is linearly dependent, and so are the columns of $M_2$, making its determinant 0.

\textbf{General case.} Generalising the proof above  from $p_{CDCC\ldots CC} -p_{DCCC\ldots CC}$ to $p_{CDDD\ldots DD} -p_{DCDD\ldots DD}$ is straightforward, but it is less obvious why the conservation property holds for an arbitrary set of indices $I$. 

For any $I$ as in Proposition \ref{st: invariants of the skew symmetric}, consider the 4 columns of the matrix $\begin{pmatrix}\widetilde{\mathbf{M}}_N(\p,\q) & \mathbf{f}\end{pmatrix}$ where the elements $p_{CD \ I}$ and $p_{DC \ I}$ appear - observe that due to the choice of $I$, these elements do indeed belong to four neighbouring columns. The matrix has the following form:

\begin{equation}
\begin{pmatrix}\widetilde{\mathbf{M}}_N(\p,\q) & \mathbf{f}\end{pmatrix} =  \begin{pmatrix}
        \vdots & \vdots &\vdots & \vdots& \vdots\\
        \vdots & p_{CD \ I}q_{DC\ I}& p_{CD\ I}&q_{DC\ I}&\vdots\\
        \vdots & \vdots &\vdots & \vdots& \vdots\\
        \vdots & p_{DC \ I}q_{CD\ I}& p_{DC\ I}&q_{CD\ I}&\vdots\\
        \vdots & \vdots &\vdots & \vdots& \vdots\\
    \end{pmatrix}
    \end{equation}
After differentiating and setting $\p=\q$, we get 
\begin{small}
\begin{equation}
\begin{split}
\left<\frac{\partial \begin{pmatrix}\widetilde{\mathbf{M}}_N(\p,\q) & \mathbf{f}\end{pmatrix}}{\partial \p}\vert_{\p=\q} , \nabla\left( p_{CD\ I}-p_{DC \ I}\right)\right>&= \underbrace{\begin{pmatrix}
        \vdots & \vdots &\vdots & \vdots& \vdots\\
        \vdots & p_{DC\ I}& p_{CD\ I}&p_{DC\ I}&\vdots\\
        \vdots & \vdots &\vdots & \vdots& \vdots\\
        \vdots & -p_{CD\ I}& p_{DC\ I}&p_{CD\ I}&\vdots\\
        \vdots & \vdots &\vdots & \vdots& \vdots\\
    \end{pmatrix}}_{\mathbf{M}_1}\\&+ \underbrace{\begin{pmatrix}
        \vdots & \vdots &\vdots & \vdots& \vdots\\
        \vdots & p_{CD \ I}p_{DC\ I}& 1&p_{DC\ I}&\vdots\\
        \vdots & \vdots &\vdots & \vdots& \vdots\\
        \vdots & p_{DC \ I}p_{CD\ I}& -1&p_{CD\ I}&\vdots\\
        \vdots & \vdots &\vdots & \vdots& \vdots\\
    \end{pmatrix}}_{\mathbf{M}_2}
    \end{split}
\end{equation}
\end{small}
\begin{remark}
    Bear in mind that there are "$-1$"s "concealed" in the dots - we are omitting them for the sake of brevity.
\end{remark}
\noindent
The matrices $\mathbf{M}_1$ and $\mathbf{M}_2$ are different from $\begin{pmatrix}\widetilde{\mathbf{M}}_N(\p,\p) & \mathbf{f}\end{pmatrix}$ in one column each.
Observe that for $\mathbf{M}_1$ the different column  is $\begin{pmatrix} \ldots &p_{DC \ I}&\ldots &-p_{CD \ I}&\ldots \end{pmatrix}$ replacing  $\begin{pmatrix} \ldots& p_{DC \ I}& p _{DC \ I}&\ldots p_{CD \ I} &p_{DC\ I}&\ldots \end{pmatrix}$. The latter is an eigenvector of $J^2_N$ with eigenvalue 1; therefore, the column system of the matrix $\mathbf{M}_1$ still contains all $2^{2N-1} - 2^{N-1}$ pairs of vectors $\mathbf{v}_i-\mathbf{v}_{\sigma(i)}$ plus plus the vector $\mathbf{f}$ -i.e. again, one more vector  than the dimension of the $-1$ eigenspace of $J^2_N$, and hence a linearly dependent subsystem. This entails that $\det \mathbf{M}_1 =0$. 

To show that $\det \mathbf{M}_2 = 0$, note that the $\begin{pmatrix}\ldots &1&\ldots&-1
&\ldots\end{pmatrix}$ is an eigenvector of $J^2_N$ with $-1$ eigenvalue; thus, the number of $-1$ eigenvectors of $J^2_N$ within the column system of $\mathbf{M}_2$ is still $2^{2N-1} - 2^{N-1}+1$, once again making it a linearly dependent system.

 \end{proof}
\section{Conclusion}
In the scope of this paper, we examined some of the properties of the repeated donation game from the point of view of linear algebra and dynamical systems. 
In particular, we demonstrated that a classification of transition matrices can be achieved, with equality classes consisting of matrices conjugated by eight permutation matrices. We described what changes in the "point of view" within the game these matrices describe and examined the more interesting ones in detail: those that exchange the notions of cooperation and defection and change the focal player. 

Both of these transformations offer insights into the properties of the memory-$N$ game: the former gives rise to a $\mathbf{Z}_2$-symmetry and the latter allows to decompose the dynamics into symmetric and anti-symmetric parts and treat the first as a perturbation to the second. 

The adaptive dynamics of the anti-symmetric part, when considered on their own, look extremely promising: they generalise the invariant properties of the setup with the arbitrary payoff vector and have a fascinating interpretation as strategies that aim not to increase the absolute profit, but to widen the "wealth gap" between the two players. 

We believe that the approaches used in this paper shed some light on the properties of memory-$N$ donation games, as well as their adaptive dynamics and provide a pure mathematics perspective on the problem. Additionally, the authors believe that the games with an anti-symmetric payoff vector show a lot of promise and are worth investigating in more detail. 

\appendix
\section{Explicit formulae}
\label{sec: appendix}
Explicit equations for memory 1 adaptive dynamocs for an arbitrary payoff vector $\left(f_1\ f_2 \ f_3 \ f_4\right)$:
\begin{footnotesize}
    \begin{equation}
        \label{eq: adaptive dynamics general form}
\begin{cases}
 \dot{p}_{CC} =&\frac{1}{A} \Biggl(p_{DD} (2 p_{CD} p_{DC}-(p_{CD}+p_{DC}) p_{DD}+p_{DD}) \Bigl(-f_3 p_{CC}^2+f_3 p_{CD} p_{CC}^2\\&-f_1 p_{CD}^2 p_{CC}+f_1 p_{DC}^2 p_{CC}-f_1 p_{CC}+f_3 p_{CC}+2 f_1 p_{CD} p_{CC}-f_3 p_{CD} p_{CC}\\&+f_3 p_{DC} p_{CC}-f_3 p_{CD} p_{DC} p_{CC}-f_1 p_{CD} p_{DC}^2+(f_3 (p_{CC}-p_{DC}-1)\\&+f_1 (-p_{CD}+p_{DC}+1)) p_{DD}^2+f_1 p_{CD}^2 p_{DC}-f_3 p_{DC}-f_1 p_{CD} p_{DC}+f_3 p_{CD} p_{DC}\\&-f_4 (p_{CD}-p_{DC}-1) \bigl(p_{CC}^2-(p_{CD}+p_{DC}+1) p_{CC}+p_{CD} p_{DC}+1\bigr)\\&-\bigl(f_3(p_{CC} (p_{CC}+p_{CD}-1) -(p_{CC}+p_{CD}) p_{DC} -1)\\&-2 f_1 p_{CC} (p_{CD}-p_{DC}-1)\bigr) p_{DD}+f_2 \bigl((p_{CD}-p_{CC}) p_{DD}^2\\&+\left(p_{CC}^2+(-p_{CD}+p_{DC}+1) p_{CC}-p_{CD} p_{DC}-1\right) p_{DD}\\&-(p_{CC}-1) (-p_{DC} p_{CD}+p_{CD}+p_{CC} p_{DC}-1)\bigr)\Bigr)\Biggr)\\
 \dot{p}_{CD}=&-\frac{1}{A}(p_{CC}-1) (p_{CC}-p_{DD}+1) p_{DD} \Biggl((f_3 (p_{CC}-p_{DC}-1)\\&+f_1 (-p_{CD}+p_{DC}+1)) p_{DD}^2+\bigl(2 f_1 (p_{CD}-p_{DC}-1) p_{DC}\\&+f_3 \left(-p_{CC}^2+p_{DC}^2+p_{DC}+1\right)\bigr) p_{DD}\\&+f_4 (p_{CD}-p_{DC}-1) \left((p_{CC}-p_{DC})^2+p_{DC}-1\right)\\&+p_{DC} \Bigl(f_3 (p_{CC}-1) (p_{CC}-p_{DC})+f_1 \left(p_{DC}^2-p_{CD} p_{DC}+p_{CD}-1\right)\Bigr)\\&+f_2 \Bigl((p_{CD}-p_{CC}) p_{DD}^2+\left(p_{CC}^2+p_{DC}^2-2 p_{CD} p_{DC}+p_{DC}-1\right) p_{DD}\\&-(p_{CC}-1) \left(p_{DC}^2-2 p_{CD} p_{DC}+p_{DC}+p_{CC} (p_{CD}-1)+p_{CD}-1\right)\Bigr)\Biggr)\\
 \dot{p}_{DC}=& \frac{1}{A}(p_{CC}-1) (p_{CC}-p_{DD}+1) p_{DD} \Biggl(f_1 p_{CD}^3-2 f_1 p_{CD}^2+f_3 p_{CD}^2\\&-f_3 p_{CC} p_{CD}^2-f_1 p_{DC} p_{CD}^2+f_1 p_{CD}-f_3 p_{CD}+f_3 p_{CC} p_{CD}+f_1 p_{DC} p_{CD}\\&-2 f_3 p_{DC} p_{CD}+2 f_3 p_{CC} p_{DC} p_{CD}+(f_1 (p_{CD}-p_{DC}-1)\\&+f_3 (-p_{CC}+p_{DC}+1)) p_{DD}^2-f_4 \left((p_{CC}-p_{CD})^2+p_{CD}-1\right) (p_{CD}-p_{DC}-1)\\&-f_3 p_{CC}^2 p_{DC}+f_3 p_{DC}+\bigl(2 f_1 p_{CD} (-p_{CD}+p_{DC}+1)\\&+f_3 \left(p_{CC}^2+p_{CD} (p_{CD}-2 p_{DC}-1)-1\right)\bigr) p_{DD}+f_2 \bigl((p_{CD}-p_{DD}-1) p_{CC}^2\\&+\left(-p_{CD}^2+p_{CD}+p_{DD}^2\right) p_{CC}-2 p_{CD}+p_{DD}+p_{CD} (p_{CD}-p_{DD}) (p_{DD}+1)+1\bigr)\Biggr)\\
 \dot{p}_{DD}=&-\frac{1}{A}(p_{CC}-1) (-2 p_{DC} p_{CD}+p_{CD}+p_{DC}+p_{CC} (p_{CD}+p_{DC}-1)-1) \\&*\Biggl((f_3 (p_{CD}-p_{CC})+f_1 (-p_{CD}+p_{DC}+1)) p_{DD}^2+\Bigl(f_3 (p_{CC}-1) (p_{CC}-p_{CD}+2)\\&+f_3 (p_{CC}-p_{CD}-1) p_{DC}+f_1 \left((p_{CD}-1)^2-p_{DC}^2\right)\Bigr) p_{DD}\\&+p_{DC} (f_1 p_{CD} (-p_{CD}+p_{DC}+1)-f_3 (p_{CC}-1) (p_{CC}-p_{CD}+1))\\&-f_4 (p_{CD}-p_{DC}-1) \left(p_{CC}^2-2 p_{DD} p_{CC}-p_{CD} p_{DC}+(p_{CD}+p_{DC}+1) p_{DD}-1\right)\\&+f_2 \bigl((p_{CD}-p_{DD}-1) p_{CC}^2-p_{CD} (p_{DC}+p_{DD}) p_{CC}+p_{DD} (p_{DC}+p_{DD}+1) p_{CC}\\&-p_{CD}+(p_{CD}-p_{DD}) (p_{DD} p_{DC}+p_{DC}+p_{DD})+1\bigr)\Biggr)
\end{cases}
    \end{equation}
    where 
    \begin{equation}
    \begin{split}
    A:=& (p_{CD}-p_{DC}-1) \Bigl((-2 p_{CC}+p_{CD}+p_{DC}+1) p_{DD}^2+2 \left(p_{CC}^2-p_{CD} p_{DC}-1\right) p_{DD}-\\&(p_{CC}-1) (-2 p_{DC} p_{CD}+p_{CD}+p_{DC}+p_{CC} (p_{CD}+p_{DC}-1)-1)\Bigr)^2
    \end{split}
        \end{equation}
\end{footnotesize}
\begin{footnotesize}
    \begin{equation}
        \label{eq: explicit form counting strategy}
        \begin{split}
            dfk
        \end{split}
    \end{equation}
\end{footnotesize}
Explicit equations for the anti-symmetric part of counting strategy  dynamics
    \begin{footnotesize}
\begin{equation}
    \label{eq:anti-symmetric part after multiplication counting}
    \begin{cases}
\dot{q}_2^a  =& -\frac{1}{2} q_0 (2 q_1 (q_1-q_0)+q_0) \bigl(-2 q_0 \left(-q_2^2+q_1^2+1\right)+q_0^2 (-2 q_2+2 q_1+1)+\\ &(q_2-1) (2 q_1 (-q_2+q_1-1)+q_2+1)\bigr),\\
\dot{q}_1^a =& \frac{1}{2} (q_2-1) q_0 (q_2-q_0+1) \bigl(-2 q_0 \left(-q_2^2+q_1^2+1\right)+q_0^2 (-2 q_2+2 q_1+1)+\\&(q_2-1) (2 q_1 (-q_2+q_1-1)+q_2+1)\bigr),\\ 
\dot{q}_0^a =& \frac{1}{2} (q_2-1) (2 q_1 (-q_2+q_1-1)+q_2+1) \bigl(-2 q_0 \left(-q_2^2+q_1^2+1\right)+\\&q_0^2 (-2 q_2+2 q_1+1)+(q_2-1) (2 q_1 (-q_2+q_1-1)+q_2+1)\bigr)
\end{cases}
\end{equation}
\end{footnotesize}
\

\end{document}